\definecolor{colorl}{rgb}{.804,.196,.47}
\definecolor{colork}{rgb}{.45,.05,.545}
\definecolor{colori}{rgb}{.545,0,0}
\definecolor{colorj}{rgb}{.024,.15,.645}
\definecolor{colorn}{rgb}{.4,.2,.4}
\definecolor{colorM}{rgb}{.41,.545,.132}
\definecolor{colorN}{rgb}{.21,.545,.332}
\definecolor{lightgrey}{rgb}{.804,.804,.756}
\newcommand*\circled[1]{$\,$\tikz[baseline=(char.base)]{
            \node[shape=circle,draw,inner sep=1pt] (char) {#1};}}
\newcommand{\II}{\mathbf{I}}
\newcommand{\NN}{\mathbb{N}}
\newcommand{\ZZ}{\mathbb{Z}}
\newcommand{\C}{\mathcal C}
\newcommand{\F}{\mathcal F}
\newcommand{\FF}{\mathcal F'}
\newcommand{\Hei}{\mathscr H}
\newcommand{\Dr}{\mathscr D}
\newcommand{\W}{\mathscr W}
\newcommand{\X}{\mathscr X}
\newcommand{\Y}{\mathscr Y}
\newcommand{\Z}{\mathscr Z}
\newcommand{\T}{B}
\newcommand{\For}{For}
\newcommand{\Hom}{\operatorname{Hom}}
\newcommand{\End}{\operatorname{End}}
\newcommand{\Ob}{\operatorname{Ob}}
\renewcommand{\k}{\Bbbk}
\newcommand{\Vect}{\mathbf{Vect}_\k}
\newcommand{\vect}{\mathbf{vect}_\k}
\newcommand{\Alg}{\mathbf{Alg}}
\newcommand{\Bialg}{\mathbf{Bialg}}
\newcommand{\HAlg}{\mathbf{HAlg}}
\newcommand{\ModAlg}{\mathbf{ModAlg}}
\newcommand{\coAlg}{\mathbf{coAlg}}
\newcommand{\BrSyst}{\mathbf{BrSyst}}
\newcommand{\BrSystP}{\BrSyst^\downarrow}
\newcommand{\BrSystCP}{\BrSyst^\uparrow}
\newcommand{\BrSystBP}{\BrSyst^\updownarrow}
\newcommand{\ModCat}{\mathbf{Mod}}
\newcommand{\ModCatN}{\mathbf{Mod}}
\newcommand{\Id}{\operatorname{Id}}
\newcommand{\Bimod}{\mathcal B \mathcal M}
\newcommand{\BBialg}{\mathcal B}
\newcommand{\BBialgg}{\mathcal B'}
\newcommand{\op}{\operatorname{op}}
\newcommand{\leftV}{\overleftarrow{V}}
\newcommand{\oV}{\overline{V}}
\newcommand{\oW}{\overline{W}}
\newcommand{\of}{\overline{f}}
\newcommand{\osigma}{\overline{\sigma}}
\newcommand{\oxi}{\overline{\xi}}
\newcommand{\onu}{\overline{\nu}}
\newcommand{\ossigma}{\boldsymbol{\overline{\sigma}}}
\newcommand{\ocsh}{\underset{\osigma}{\cshuffle}}
\newcommand{\osh}{\underset{\osigma}{\shuffle}}
\newcommand{\Ocsh}{\underset{-\osigma}{\cshuffle}}
\newcommand{\rrho}{\mathcal R}
\newcommand{\llambda}{\mathcal L}
\renewcommand{\le}{\leqslant}
\renewcommand{\ge}{\geqslant}
\newcommand{\bi}[1]{\textbf{\textit{#1}}}
\newcommand{\gsm}{\mbox{$\blacktriangleright \hspace{-0.7mm}<$}}
\newcommand{\gtl}{\mbox{$>\hspace{-0.85mm}\blacktriangleleft$}}
\newcommand\mapsfrom{\mathrel{\reflectbox{\ensuremath{\mapsto}}}}
\newcommand\longmapsfrom{\mathrel{\reflectbox{\ensuremath{\longmapsto}}}}
\newcommand*{\longhookrightarrow}{\ensuremath{\lhook\joinrel\relbar\joinrel\rightarrow}}
\newcommand*{\longhookleftarrow}{\ensuremath{\leftarrow\joinrel\relbar\joinrel\rhook}}
\newcommand*{\longlongleftrightarrow}{\ensuremath{\leftarrow\joinrel\relbar\joinrel\relbar\joinrel\relbar\joinrel\relbar\joinrel\relbar
\joinrel\relbar\joinrel\relbar\joinrel\relbar\joinrel\rightarrow}}
  \theoremstyle{plain}
\newtheorem{observation}[theorem]{Observation}
  \theoremstyle{definition}
\newtheorem{notation}[theorem]{Notation}
\newtheorem{convention}[theorem]{Convention}
\begin{document}

\title{Braided systems: a unified treatment of algebraic structures with several operations}
\shorttitle{Braided systems}

\author{Victoria Lebed}             
\email{lebed.victoria@gmail.com}
\address{School of Mathematics,
         Trinity College,
         Dublin 2,
         Ireland}


\classification{16T25, 16T10, 16T05, 16E40, 18D10.}

\keywords{braided system, braided homology, Hopf algebra, Hopf (bi)module, Heisenberg double, crossed product, bialgebra homology, distributive law, multi-quantum shuffle algebra.}

\begin{abstract}
Bialgebras and Hopf (bi)modules are typical algebraic structures with several interacting operations. Their structural and homological study is therefore quite involved. We develop the machinery of braided systems, tailored for handling such multi-operation situations. Our construction covers the above examples (as well as Poisson algebras, Yetter--Drinfel$'$d modules, and several other structures, treated in separate publications). In spite of this generality, graphical tools allow an efficient study of braided systems, in particular of their representation and homology theories. These latter naturally recover, generalize, and unify standard homology theories for bialgebras and Hopf (bi)modules (due to Gerstenhaber--Schack, Panaite--{\c{S}}tefan, Ospel, Taillefer); and the algebras encoding their representation theories (Heisenberg double, algebras~$\mathscr X$, $\mathscr Y$, $\mathscr Z$ of Cibils--Rosso and Panaite). Our approach yields simplified and conceptual proofs of the properties of these objects.
\end{abstract}


\maketitle

\section{Introduction}
In~\cite{Lebed1} we developed representation and (co)homology theories for braided objects in a monoidal category~$\C$ (e.g., $\C = \Vect$). We interpreted associative / Lie algebras and self-distributive structures as braided objects, and could thus apply our theories to them. As a result, we unified classical constructions into one, and explained their otherwise mysterious similarities. The aim of this article is to extend the braided approach to more complicated algebraic structures.

Concretely, an object~$V$ in~$\C$ is called \emph{braided} when endowed with a morphism $\sigma\colon V^{\otimes 2} \to  V^{\otimes 2}$ satisfying the \emph{Yang--Baxter equation} (\emph{YBE})
$\sigma^1 \sigma^2 \sigma^1 = \sigma^2 \sigma^1 \sigma^2$, where $\sigma^1 =\sigma \otimes \Id_V$ and $\sigma^2= \Id_V \otimes \sigma$. For instance, in~\cite{Lebed1} we showed that a unital associative algebra is braided, with $\sigma_{Ass} (v \otimes w) =  1 \otimes v \cdot w$. However, this one-object-one-morphism setting is very restrictive. For instance, a {bialgebra} comes with several operations: (co)multi\-plication and (co)unit. Its Gerstenhaber--Shack (co)homology is defined on $\Hom(H^{\otimes n},H^{\otimes m}) \simeq H^{\otimes m} \otimes (H^*)^{\otimes n}$ and involves two objects, $H$ and~$H^*$ (here $H$ is finite-dimensional). A way out is to consider a family of objects $(V_1, \ldots, V_r)$ in~$\C$ endowed with morphisms  $\sigma_{i,j} \colon V_i \otimes V_j \to V_j \otimes V_i$, ${i  \le j}$, satisfying the colored version of the YBE on all tensor products $V_i \otimes V_j \otimes V_k$ with ${i \le j \le k}$. This is what we call a \emph{rank~$r$ braided system}, a notion central to this article.  The $r=2$ case recovers the \emph{$WXZ$-systems} of Hlavat{\'y}--{\v{S}}nobl \cite{HlSn}, motivated by the concept of quantum doubles. They classified such systems in dimension~$2$ and studied their symmetries.

Sections~\ref{sec:BrSyst}-\ref{sec:BrSystemHomology} extend the representation and (co)homology theories of braided objects to braided systems. Multi-versions of \emph{braided modules} and \emph{braided (co)chain complexes} are defined; the latter take the former as coefficients. Further sections explore braided systems encoding various algebraic structure, in the sense of Table~\ref{tab:BrVsStr}. The row $\BrSyst_r(\C) \hookleftarrow \mathbf{Structure}(\C)$ means that the categories of the algebraic structures we work with (e.g., bialgebras in~$\C$) are recovered as subcategories of the category of rank~$r$ braided systems in~$\C$. Properties of our structures and their (co)homologies are then deduces from general results on braided systems.
\begin{table}\centering
\begin{tabular}{|rcl|}
\hline
\rowcolor{lightgrey} braided system & $\mapsfrom$& algebraic structure \\
\hline
braiding components $\sigma_{i,j}$ &$\leftrightarrow$& operations  \\
\hline
colored YBEs &$\Leftrightarrow$& defining relations  \\
\hline
braided morphisms  &$\simeq$& structural morphisms \\
\hline
$\BrSyst_r(\C)$ &$\hookleftarrow$& $\mathbf{Structure}(\C)$ \\
\hline
braided modules &$\supseteq$& usual modules \\
\hline
braided complexes &$\supseteq$& usual complexes \\
\hline
\end{tabular}
\caption{Braided interpretation for algebraic structures}\label{tab:BrVsStr}
\end{table}

The braided systems considered here are composed of {unital associative algebras} (\emph{UAAs}) $(V_i,\mu_i,\nu_i)$, with as diagonal braiding components~$\sigma_{i,i}$ the associativity braidings $\sigma_{Ass}=\nu_i \otimes \mu_i$. 
In Section~\ref{sec:SystemsOfUAAs} we study such systems, and relate them to \emph{braided tensor products of algebras} $\leftV = V_r \otimes \cdots \otimes V_1$. Concretely, we show that morphisms~$\xi_{i,j}$ for $i<j$ complete the associativity braidings $\sigma_{i,i}$ into a braided system structure if and only if they define an associative multiplication on~$\leftV$ by 
\[\mu_{\leftV}=(\mu_r \otimes \cdots \otimes \mu_1)  \xi_{1,2}^{2r-2}  (\xi_{2,3}^{2r-4}  \xi_{1,3}^{2r-3})  \cdots  (\xi_{r-1,r}^2  \cdots  \xi_{2,r}^{r-1}  \xi_{1,r}^r),\]
where~$\xi_{i,j}^p$ denotes the morphism~$\xi_{i,j}$ applied at positions~$p$ and~$p+1$.

Rank~$2$ braided tensor products are at the heart of Majid's \emph{braided geometry} \cite{Majid3,BraidedDiff,MajidBraided}. They provide an algebra analogue of the product of two spaces in non-commutative geometry. A pleasant consequence of Majid's work is the construction of new examples of non-commutative non-cocommutative Hopf algebras as bicross products, which are particular cases of braided tensor products. 

The case of general~$r$ independently appeared in two different frameworks:
\begin{enumerate}
\item Mart{\'{\i}}nez, Pe{\~n}a, Panaite, and Van~Oystaeyen considered \emph{iterated twisted tensor products of algebras} in~$\Vect$, and studied various Hopf algebraic, geometric, and physical examples \cite{IteratedTwisted}. Their motivation came from braided geometry.
\item Cheng~\cite{Cheng}, generalizing Beck~\cite{Beck}, introduced the notion of \emph{iterated distributive laws}. Categorical motivations (a study of interchange laws in a strict $n$-category) led her to work in the monoidal category of the endofunctors of a given category.
\end{enumerate}

All these approaches relate the associativity of~$\mu_{\leftV}$ to the YBEs for the~$\xi_{i,j}$ with $i<j$, combined with the naturality of the~$\xi_{i,j}$ w.r.t. the multiplications~$\mu_i$ and~$\mu_j$. Our main contribution is a treatment of {all} the conditions ensuring the associativity of~$\mu_{\leftV}$ in terms of YBEs: 
\vspace*{-3pt}
\begin{align*}  \left. 
   \begin{array}{r c l}
   \text{associativity of } \mu_i & \Longleftrightarrow &  \text{YBE on } V_i \otimes V_i \otimes V_i\\
   \text{compatibility between } \xi_{i,j} \;\&\; \mu_i & \Longleftrightarrow &  \text{YBE on } V_i \otimes V_i \otimes V_j\\
   \text{compatibility between } \xi_{i,j} \;\&\;  \mu_j & \Longleftrightarrow &  \text{YBE on } V_i \otimes V_j \otimes V_j   \end{array}
  \right\}& \quad \text{new}   \\  
 \left. 
   \begin{array}{r c l}
      \text{compatibilities between the } \xi & \Longleftrightarrow &  \text{YBE on } V_i \otimes V_j \otimes V_k 
   \end{array}
  \right\}& \quad \text{known}  
\end{align*}  
This entirely braided interpretation is made possible by our associativity braiding. Among its advantages is the applicability of the braided (co)homology machinery to braided tensor products of algebras; this turns out to be fruitful in our examples.

Sections \ref{sec:crossed}-\ref{sec:HBimod} explore braided systems of UAAs encoding \textit{generalized two-sided crossed products} (as defined by Bulacu, Panaite, and Van Oystaeyen \cite{PanaiteGen}) and finite-dimensional $\k$-linear \textit{bialgebras}. For the latter we propose two braided systems, recovering Hopf modules and Hopf bimodules as corresponding braided modules, and yielding a graphical interpretation of Hopf (bi)module homology, which is more workable than the original definitions. Both systems are presented in Table~\ref{tab:BasicIngredientsBialg}. Here~$\tau$ is the transposition $v \otimes w \mapsto w \otimes v$ (or the underlying braiding if one works in a symmetric category); $\sigma_{bi} \colon H \otimes H^* \to H^* \otimes H$ is defined, using Sweedler's notation, by
\begin{equation}\label{eqn:sigmabi}
\sigma_{bi}(h \otimes l)= \left\langle l_{(1)},h_{(2)}\right\rangle l_{(2)} \otimes h_{(1)};
\end{equation}
and, when writing $\sigma_{i,j}=\sigma_{Ass}$ or $\sigma_{bi}$, we mean the formulas for $\sigma_{Ass}$ or $\sigma_{bi}$ applied to the (bi)algebra corresponding to $V_i \otimes V_j$ (e.g., $\sigma_{2,4}$ in the last line is calculated according to Formula~\eqref{eqn:sigmabi} for~$H^{op,cop}$). The components $\sigma_{i,i} = \sigma_{Ass}$  are omitted.
\begin{table}\centering
\setlength{\tabcolsep}{3pt}
\begin{tabular}{|c|c|c|c|c|}
\hline  \rowcolor{lightgrey}
{structure} &\multicolumn{2}{|c|}{\cellcolor{lightgrey}  braided system} & {br. modules}   & {br. complexes} \\
\hline 
algebra & $A$ & $\sigma_{1,1}= \sigma_{Ass}$ & algebra mod.& bar,\\
\cline{2-4}
$A$ & $A$, $A^{op}$ &  $\sigma_{1,2} =\tau$  & algebra bimod. & Hochschild\\
\hline 
 & \multirow{2}*{$H,H^*$} & \multirow{2}*{$\sigma_{1,2}=\sigma_{bi}$}  & \multirow{2}*{Hopf mod.} & Gerstenhaber--Schack, \\
bialgebra &  &  &  & Panaite--{\c{S}}tefan \cite{GS90,Panaite}\\
\cline{2-5}
$H$ & $H,H^{op},$ & $\sigma_{1,2} = \tau$, $\sigma_{3,4}=\tau$, & \multirow{2}*{Hopf bimod.} & Ospel, Taillefer \\
 & $H^*,(H^*)^{op}$ & other $\sigma_{i,j}=\sigma_{bi}$ &  &  \cite{OspelThese,Taillefer}\\
\hline 
\end{tabular} \setlength{\tabcolsep}{6pt}
\caption{Braided interpretation of the algebra and the bialgebra structures}\label{tab:BasicIngredientsBialg}
\end{table}

Note the the braiding components in the systems above are not necessarily invertible. For instance, $\sigma_{bi}$ has an inverse if and only if~$H$ is a \emph{Hopf algebra}. This yields a braided interpretation of the existence of an {antipode}.

The braided system from the third line of Table~\ref{tab:BasicIngredientsBialg} yields an inclusion of the category of bialgebras in~$\vect$ into $\BrSyst_2(\vect)$. Nichita's work \cite{NiUAA,YBSyst_Entwining,Nichita_Bialg} can be seen in the same light. To encode associativity, he uses a generalization of the self-inverse braiding $\widetilde{\sigma_{Ass}}=\nu \otimes \mu+ \mu \otimes \nu - \Id_{V^{\otimes 2}}$, proposed by Nuss in the context of descent theory for noncommutative rings~\cite{Nuss}. Our~$\sigma_{Ass}$ works in more general categories, and moreover better suits for {homological applications}.

The representation-theoretic part of the article follows the philosophy of presenting complicated structures using something well understood---here modules over a well-chosen algebra. The complexity is now hidden in this algebra, which for some purposes can be treated as a black box. Table~\ref{tab:ComplicatedAlg} contains examples (for the YD example see~\cite{Lebed2ter}). Notation~$\underline{\otimes}$ is here to stress the use of braided tensor products.
\begin{table}\centering
\begin{tabular}{|c|c|} 
\hline \rowcolor{lightgrey}
{complicated structure }& {corresponding complicated algebra} \\
\hline 
bimodule over an algebra $A$ & enveloping algebra $A \otimes A^{op}$ \ \\
\hline
Hopf module over a bialgebra $H$ & Heisenberg double $\Hei(H)= H^* \underline{\otimes} H$ \\
\hline
Hopf bimodule over & algebras $\X(H) = (H \otimes H^{op})\underline{\otimes}(H^*\otimes (H^*)^{op})$,\\
a Hopf algebra $H$ & $\Y(H)$, and $\Z(H)$\\
\hline 
YD module over a bialgebra $H$ & Drinfel$'$d double $\Dr(H)=H^* \underline{\otimes} H^{op}$ \\
\hline
\end{tabular}
\caption{Algebras encoding Hopf and Yetter--Drinfel$'$d (bi)module structures}\label{tab:ComplicatedAlg}
\end{table}
Concretely, we interpret the structures from the left column as braided modules over certain braided systems of UAAs (e.g., those from Table~\ref{tab:BasicIngredientsBialg}). Further, in a very general setting we identify {braided modules} over a braided system of UAAs with modules over the corresponding braided tensor product algebra~$\leftV$:
\[\ModCatN_{(V_1,\ldots,V_r;\,\sigma_{i,i} = \sigma_{Ass},\,\xi_{i,j})} \simeq \ModCat_{\leftV}.\]
The right column of Table~\ref{tab:ComplicatedAlg} contains the relevant $\leftV$ algebras. Our general braided system theory now applies to the structures from the table. In particular, using our explicit {permutation rules} for components of a braided tensor product, we include the algebra~$\X(H)$ of Cibils--Rosso \cite{CibilsRosso} and its versions~$\Y(H)$ and~$\Z(H)$ described by Panaite~\cite{Panaite2} into a family of $\# S_4 = 24$ algebras. Explicit isomorphisms between these algebras and equivalences between their module categories are given. This circumvents the technical calculations and generalizes some results of~\cite{Panaite2}. Further, we obtain structural results for certain braided complexes---e.g., we recover the Hopf bimodule structure of the bar complex of a bialgebra with coefficients in a Hopf bimodule. 

We finish with a list of other ``braided-systematizable'' structures, the work on which is in progress.
\begin{enumerate}
\item Our braided system for generalized two-sided crossed products works in particular for \textit{$H$-(bi)(co)module algebras}. Repeating our study of bialgebra braided homology in this context, one recovers Yau's deformation bicomplex of module algebras~\cite{Yau}. Braided tools also simplify Kaygun's treatment of $H$-equivariant $A$-bimodule structures used in his Hopf--Hochschild module algebra homology~\cite{Kaygun}.
\item Combining~$\sigma_{Ass}$ with the Lie algebra braiding from~\cite{Lebed1}, one gets a rank~$2$ braided system encoding the non-commutative \textit{Poisson algebra} structure. Its braided homology includes Fresse's Poisson algebra homology~\cite{Fresse}.
\item The braided system machinery also applies to the \textit{quantum Koszul complexes} of Gurevich and Wambst \cite{Gurevich,Wambst}. 
\end{enumerate}

\section*{Notations and conventions} 
All our structures live in a {strict mono\-idal} category $(\C,\otimes,\II)$; the reader can have in mind the category $\Vect$ of vector spaces over a field~$\k$ for simplicity. The word ``strict'' is often omitted for brevity, as well as the word ``monoidal'' in the terms ``braided / symmetric monoidal category''. Given an object~$V$ in~$\C$, we succinctly denote its tensor powers by $V^n=V^{\otimes n}$, $V^0=\II$. Further, given a morphism $\varphi \colon V^{ l}\to V^{ r},$ the following notation is repeatedly used:
\begin{equation}\label{eqn:phi_i}
\varphi^i = \Id_V^{\otimes (i-1)}\otimes\varphi \otimes \Id_V^{\otimes (k-i+1)} \quad \colon \quad V^{k+l}\rightarrow V^{k+r},
\end{equation}
and similarly for morphisms on tensor products of different objects. Working with a family of objects $(V_1,V_2,\ldots)$, we put $\Id_i=\Id_{V_i}$.

The already classical \emph{graphical calculus} is extensively used in this article. Dots denote objects in $\C$; horizontal gluing represents tensor product; graph diagrams encode morphisms from the object corresponding to the lower dots to that corresponding to the upper dots; vertical gluing stands for morphism composition, and vertical strands for identities. All diagrams read {from bottom to top}.

Notations $S_n, B_n, B_n^+$ stand for the symmetric groups, the braid groups, and the positive braid monoids. Their standard generators are denoted by, respectively, $s_i$ and $\sigma_i$, $ 1 \le i \le n-1$.

\ack The author is grateful to Marc Rosso for sharing his passion for quantum shuffles; to Muriel Livernet, Fr\'ed\'eric Chapoton, and Fr\'ed\'eric Patras for illuminating discussions; to Paul-Andr\'e Melli\`es and Eugenia Cheng for pointing out connections between this work and recent results involving distributive laws in category theory; and to the reviewer for helpful questions and remarks.

\section{Braided vocabulary}\label{sec:BrSyst} 

The notion of braided system generalizes the more familiar braided objects.

\begin{definition}
\begin{itemize}
\item 
A \emph{rank~$r$ braided system} in $\C$ is an ordered family $V_1,V_2,\ldots, V_r$ of objects endowed with a \emph{braiding}, i.e., morphisms
 $\sigma_{i,j} \colon V_i \otimes V_j \to V_j \otimes V_i$ for $1\le {i  \le j} \le r$  satisfying the \emph{(colored) Yang--Baxter equation}
 \begin{equation}\label{eqn:YB}
 (\sigma_{j,k}\otimes \Id_i)(\Id_j \otimes \sigma_{i,k})(\sigma_{i,j}\otimes \Id_k) =(\Id_k \otimes \sigma_{i,j})(\sigma_{i,k}\otimes \Id_j)(\Id_i \otimes \sigma_{j,k})
\end{equation}
on all the tensor products $V_i \otimes V_j \otimes V_k$ with $1\le {i \le j \le k} \le r$. Such a system is denoted by $((V_i)_{1\le i\le r};(\sigma_{i,j})_{1\le i\le j\le r})$ or briefly $(\oV,\osigma)$.
\item A \emph{braided morphism} $\of \colon (\oV,\osigma) \to (\oW,\oxi)$ between two braided systems in $\C$ of the same rank $r$ is a collection of morphisms $(f_i \in \Hom_\C(V_i,W_i))_{1\le i \le r}$  respecting the braiding, in the sense that, for all $1\le {i  \le j} \le r$, one has
\begin{equation}\label{eqn:BrMor}
(f_j \otimes f_i)  \sigma_{i,j} = \xi_{i,j}  (f_i \otimes f_j).
\end{equation}
\item The category of rank $r$ braided systems and braided morphisms in $\C$ is denoted by $\BrSyst_r(\C)$.
\item Rank~$1$ braided systems are called \emph{braided objects} in $\C$.
\item For $1\le s  \le t \le r$, the \emph{braided $(s,t)$-subsystem} of $(\oV,\osigma)$, denoted by $(\oV,\osigma)[s,t]$, is the subfamily  $V_s,\ldots, V_t$ with the corresponding components $\sigma_{i,j}$ of $\osigma$.
\end{itemize}
\end{definition}

The notion of braiding thus defined is
\begin{enumerate}
\item { positive}: the $\sigma_{i,j}$ are not supposed to be invertible (the term \emph{pre-braiding} is sometimes used in such situations);

\item { partial}, i.e., defined only on certain couples of objects;

\item { local}: contrary to the usual notion of braiding in a monoidal category, no naturality is imposed.
\end{enumerate}

Graphically, a braiding component is represented as a braid whose strands are ``colored'' with the corresponding objects~$V_i$, or simply with the indices~$i$ (Fig.~\ref{pic:YB}{A}). The definition allows a $j$-colored strand to overcross only strands colored with indices $i \le j$.
The diagrammatic counterpart of the (colored) YBE is now the (colored) third Reidemeister move (Fig.~\ref{pic:YB}{B}), which is at the heart of braid theory. One can thus work with braided systems by manipulating positive braid diagrams.

\begin{figure}\centering
\begin{tikzpicture}[scale=0.5]
\node  at (-1.8,0.5)  {$\sigma_{i,j} \, \longleftrightarrow $};
\draw [thick, colori, rounded corners](0,0)--(0,0.25)--(0.4,0.4);
\draw [thick, colori, rounded corners](0.6,0.6)--(1,0.75)--(1,1);
\draw [thick, colorj, rounded corners](1,0)--(1,0.25)--(0,0.75)--(0,1);
\node  at (0,0) [colori, below] {$\scriptstyle i$};
\node  at (1,0) [colorj, below] {$\scriptstyle j$};
\node  at (2.7,0)  {\circled{A}};
\node  at (6,-1.5)  { };
\end{tikzpicture}
\begin{tikzpicture}[xscale=0.5,yscale=0.4]
\node  at (-3,1.5)  {YBE $\quad \longleftrightarrow $};
\draw [thick, colori, rounded corners](0,0)--(0,0.25)--(0.4,0.4);
\draw [thick, colori, rounded corners](0.6,0.6)--(1,0.75)--(1,1.25)--(1.4,1.4);
\draw [thick, colori, rounded corners](1.6,1.6)--(2,1.75)--(2,3);
\draw [thick, colorj, rounded corners](1,0)--(1,0.25)--(0,0.75)--(0,2.25)--(0.4,2.4);
\draw [thick, colorj, rounded corners](0.6,2.6)--(1,2.75)--(1,3);
\draw [thick, colork, rounded corners](2,0)--(2,1.25)--(1,1.75)--(1,2.25)--(0,2.75)--(0,3);
\node  at (0,0) [colori, below] {$\scriptstyle i$};
\node  at (1,0) [colorj, below] {$\scriptstyle j$};
\node  at (2,0) [colork, below] {$\scriptstyle k$};
\node  at (3.2,1.5){$=$};
\end{tikzpicture}
\begin{tikzpicture}[xscale=0.5,yscale=0.4]
\node  at (-0.2,1.5){};
\draw [thick, colorj, rounded corners](1,1)--(1,1.25)--(1.4,1.4);
\draw [thick, colorj, rounded corners](1.6,1.6)--(2,1.75)--(2,3.25)--(1,3.75)--(1,4);
\draw [thick, colori, rounded corners](0,1)--(0,2.25)--(0.4,2.4);
\draw [thick, colori, rounded corners](0.6,2.6)--(1,2.75)--(1,3.25)--(1.4,3.4);
\draw [thick, colori, rounded corners](1.6,3.6)--(2,3.75)--(2,4);
\draw [thick, colork, rounded corners](2,1)--(2,1.25)--(1,1.75)--(1,2.25)--(0,2.75)--(0,4);
\node  at (0,1) [colori, below] {$\scriptstyle i$};
\node  at (1,1) [colorj, below] {$\scriptstyle j$};
\node  at (2,1) [colork, below] {$\scriptstyle k$};
\node  at (4,1)  {\circled{B}};
\end{tikzpicture}
\caption{Braided systems versus colored braids}\label{pic:YB}
\end{figure}
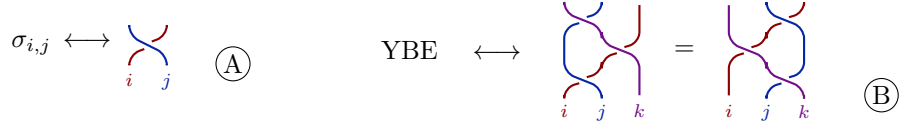 

Each component of a braided system is a braided object. Even better:

\begin{proposition}\label{thm:TrivialBrSystem}
Given a {braided} category $(\C,\otimes,\II,c)$, one has, for all $r \in \NN$, a fully faithful functor
\begin{align}
(\BrSyst_1(\C))^{\times r} &\longhookrightarrow \BrSyst_r(\C),\notag\\
(V_i,\sigma_i)_{1 \le i \le r} &\longmapsto (V_1,\ldots,V_r;\sigma_{i,i}:= \sigma_{i}, \sigma_{i,j} := c_{V_i,V_j} \text{ for } i < j),\label{eqn:TrivialBrSystem}\\
(f_i:V_i\rightarrow W_i)_{1 \le i \le r} &\longmapsto \of := (f_i)_{1 \le i \le r}.\notag
\end{align}
\end{proposition}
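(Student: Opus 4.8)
The plan is to verify, in turn, that the assignment~\eqref{eqn:TrivialBrSystem} actually lands in $\BrSyst_r(\C)$, that it sends braided morphisms to braided morphisms, that it is functorial, and finally that it induces a bijection on $\Hom$-sets. For the object part I would fix $1 \le i \le j \le k \le r$ and check the colored~\eqref{eqn:YB} on $V_i \otimes V_j \otimes V_k$ by a case analysis according to which of the indices coincide. When $i = j = k$ the colored equation is literally the ordinary YBE for the braided object $(V_i,\sigma_i)$, hence holds by hypothesis; when $i < j < k$ all three components are category braidings $c_{V_\bullet,V_\bullet}$, and the equation is exactly the Yang--Baxter relation satisfied by any braiding $c$, a standard consequence of the hexagon axioms.

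The two remaining, mixed cases are where the actual work sits, and I expect them to be the main obstacle. For $i = j < k$ the relevant components are $\sigma_{i,j} = \sigma_i$ and $\sigma_{i,k} = \sigma_{j,k} = c_{V_i,V_k}$, so~\eqref{eqn:YB} reads
\[
(c_{V_i,V_k}\otimes \Id_i)\circ(\Id_i \otimes c_{V_i,V_k})\circ(\sigma_i \otimes \Id_k) = (\Id_k \otimes \sigma_i)\circ(c_{V_i,V_k}\otimes \Id_i)\circ(\Id_i \otimes c_{V_i,V_k}).
\]
Here I would first rewrite each composite of two category braidings as $c_{V_i \otimes V_i,\, V_k}$ using the hexagon axiom, and then recognize the remaining identity as the naturality of $c$ with respect to the morphism $\sigma_i : V_i \otimes V_i \to V_i \otimes V_i$ (together with $\Id_k$). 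The symmetric case $i < j = k$ is handled dually: I rewrite $(\Id_j \otimes c_{V_i,V_j})\circ(c_{V_i,V_j}\otimes \Id_j)$ as $c_{V_i,\, V_j \otimes V_j}$ via the other hexagon and invoke naturality of $c$ with respect to $\sigma_j$. Diagrammatically both reductions are instances of sliding a morphism box past a group of strands, i.e.\ the colored third Reidemeister move of Fig.~\ref{pic:YB}, so one may alternatively just cite the graphical calculus.

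For morphisms, given braided-object maps $f_i : (V_i,\sigma_i) \to (W_i,\xi_i)$, the condition~\eqref{eqn:BrMor} for $\of = (f_i)$ splits the same way: on the diagonal $i = j$ it is precisely the braided-object morphism condition $(f_i \otimes f_i)\circ\sigma_i = \xi_i \circ (f_i \otimes f_i)$, assumed to hold, while for $i < j$ it is the naturality square of $c$ for $f_i$ and $f_j$, which holds for free. Functoriality is then immediate, since the functor is the identity on the underlying tuples of morphisms and composition and identities in both $(\BrSyst_1(\C))^{\times r}$ and $\BrSyst_r(\C)$ are componentwise. Finally, for full faithfulness the induced map on $\Hom$-sets is again $(f_i) \mapsto (f_i)$, hence injective, so faithfulness is clear; for fullness I observe that a braided morphism between two objects in the image satisfies in particular its diagonal ($i=j$) instances of~\eqref{eqn:BrMor}, which are exactly the conditions making each $f_i$ a morphism in $\BrSyst_1(\C)$, whereas the off-diagonal ($i<j$) instances impose nothing new, being automatic naturality of $c$. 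Thus every morphism in $\BrSyst_r(\C)$ between images comes from a unique morphism in $(\BrSyst_1(\C))^{\times r}$, establishing the desired bijection.
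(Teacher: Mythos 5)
Your proof is correct and takes essentially the same approach as the paper's: the same case analysis of the colored YBE (the case $i=j=k$ via the YBE for $\sigma_i$, the mixed cases $i=j<k$ and $i<j=k$ via naturality of $c$ with respect to $\sigma_i$ or $\sigma_j$, and the case $i<j<k$ via the YBE for $c$), and the same observation that condition~\eqref{eqn:BrMor} reduces on the diagonal to the braided-object morphism condition and is automatic off the diagonal by naturality of $c$, giving fullness and faithfulness. The only difference is that you make explicit the hexagon-axiom step used to rewrite the pairs of crossings as $c_{V_i\otimes V_i,\,V_k}$, which the paper leaves implicit.
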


\begin{proof}
There are three types of tensor products on which one should check the colored YBE~\eqref{eqn:YB} in order to verify that \eqref{eqn:TrivialBrSystem} defines a braided system:
\begin{enumerate}
\item On $V_i\otimes V_i\otimes V_i$, \eqref{eqn:YB} is simply the YBE for $\sigma_{i}$.
\item On $V_i\otimes V_i\otimes V_j$ and $V_i\otimes V_j\otimes V_j$, $i<j$, \eqref{eqn:YB} expresses the naturality of $c$ w.r.t. $\sigma_{i}$ and $\sigma_{j}$ respectively, which always holds in a braided category.
\item On $V_i\otimes V_j\otimes V_k$, $i<j<k$, \eqref{eqn:YB} coincides with the YBE for the categorical braiding $c$, which is again automatic in a braided category.
\end{enumerate}

Now, for morphisms, condition \eqref{eqn:BrMor} is automatic for $i < j$ thanks to the naturality of $c$, and for $i=j$ it is equivalent to $f_i$ being a braided morphism. Thus our functor is well defined, full, and faithful on morphisms.
\end{proof}

\begin{observation}\label{obs:NegSigma}
If $\C$ is {preadditive}, then for all~$r$ one has a category automorphism
\begin{align*}
\BrSyst_r(\C) &\overset{\sim}{\longleftrightarrow} \BrSyst_r(\C),\\
(\oV; (\sigma_{i,j})_{1\le i\le j\le r}) &\longleftrightarrow (\oV; (-\sigma_{i,j})_{1\le i\le j\le r}),\\
\of &\longleftrightarrow \of.
\end{align*}
\end{observation}

\begin{definition}
\begin{itemize}
\item A \emph{right (braided) module} over $(\oV,\osigma) \in \BrSyst_r(\C)$  is an object~$M$ equipped with morphisms $\overline{\rho}=(\rho_i \colon M\otimes V_i \to M)_{1 \le i \le r}$ satisfying, for all $1 \le i \le j \le r$,
\begin{equation}\label{eqn:MultiBrMod}
\rho_j  (\rho_i \otimes \Id_{j})=\rho_i  (\rho_j \otimes \Id_{i}) (\Id_M\otimes \sigma_{i,j})\quad : \quad M\otimes V_i \otimes V_j \rightarrow M. 
\end{equation} 
\item Left braided modules and left/right braided comodules, as well as braided (co)module morphisms, are defined in a similar way. 
\item The category of right braided modules and their morphisms is denoted by $\ModCat_{(\oV,\osigma)}$. Notation $_{(\oV,\osigma)}\!\ModCat$ is used in the left case, and $\ModCat^{(\oV,\osigma)}$  and $^{(\oV,\osigma)}\!\ModCat$ in the co-cases.
\end{itemize}
\end{definition}

As shown in Fig.~\ref{pic:MultiBrMod}, braided modules can be handled by manipulating a particular type of knotted trivalent graphs; see. \cite{KauffmanGraphs,YamadaGraphs,YetterGraphs} for the theory of the latter.
\begin{figure}\centering
\begin{tikzpicture}[xscale=0.4,yscale=0.35]
 \node at (-8,1) {braided module $\qquad \longleftrightarrow $};
 \draw [ultra thick, colorM] (0,0) -- (0,3);
 \draw [thick, colori] (1,0) -- (0,1);
 \draw [thick, colorj] (2,0) -- (0,2);
 \node at (0,2) [left]{$\scriptstyle \rho_{\color{colorj} j}$};
 \node at (0,1) [left]{$\scriptstyle \rho_{\color{colori} i}$};
 \fill[colorj] (0,2) circle (0.2);
 \fill[colori] (0,1) circle (0.2); 
 \node at (2,0) [colorj,below] {$\scriptstyle j$};
 \node at (1,0) [colori,below] {$\scriptstyle i$};
 \node at (0,0) [colorM,below] {$\scriptstyle M$};
 \node  at (3.5,1.5){$=$};
\end{tikzpicture}
\begin{tikzpicture}[xscale=0.4,yscale=0.35]
\node  at (-1.5,1.5){};
 \draw [colorM, ultra thick] (0,0) -- (0,3);
 \draw [thick, colori] (1,0) -- (0.87,0.35);
 \draw [thick, colori] (0.55,0.9) -- (0,2);
 \draw [thick, colorj] (2,0) -- (0,1);
 \node at (0,1) [left]{$\scriptstyle \rho_{\color{colorj} j}$};
 \node at (0,2) [left]{$\scriptstyle \rho_{\color{colori} i}$};
 \fill[colori] (0,2) circle (0.2);
 \fill[colorj] (0,1) circle (0.2); 
 \node at (2,0) [colorj,below] {$\scriptstyle j$};
 \node at (1,0) [colori,below] {$\scriptstyle i$};
 \node at (0,0) [colorM,below] {$\scriptstyle M$};
 \node at (1.5,1.2) {$ \sigma_{{\color{colori} i},{\color{colorj} j}}$};
\end{tikzpicture}
\caption{Right braided module}\label{pic:MultiBrMod}
\end{figure}
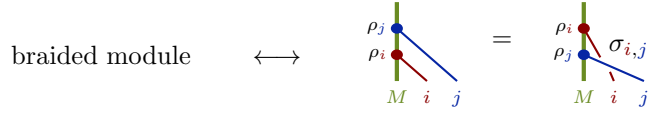

In this article and in~\cite{Lebed2ter} we interpret, among others, algebra bimodules and Hopf and Yetter--Drinfel$'$d modules as modules over certain braided systems.

\begin{observation}\label{rmk:MultiMod}
A $(\oV,\osigma)$-module structure on~$M$ boils down to a collection of $(V_i, \sigma_{i,i})$-module structures on~$M$, compatible in the sense of~\eqref{eqn:MultiBrMod}. 
\end{observation}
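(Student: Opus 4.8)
The plan is to observe that the statement is nothing more than a partition of the defining axioms of a braided $(\oV,\osigma)$-module according to whether the two indices coincide or not. First I would recall that, by definition, a right braided $(\oV,\osigma)$-module structure on~$M$ consists of morphisms $(\rho_i : M\otimes V_i \to M)_{1\le i\le r}$ subject to the relations~\eqref{eqn:MultiBrMod} indexed by the pairs $(i,j)$ with $1\le i\le j\le r$. The set of such pairs splits disjointly into the diagonal pairs $(i,i)$ and the strictly ordered pairs $(i,j)$ with $i<j$, so the entire family of axioms splits accordingly into \emph{diagonal} relations and \emph{off-diagonal} relations.

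Next I would identify the diagonal relations. Specializing~\eqref{eqn:MultiBrMod} to $i=j$ gives
$$\rho_i \circ (\rho_i \otimes \Id_i) = \rho_i \circ (\rho_i \otimes \Id_i) \circ (\Id_M \otimes \sigma_{i,i}),$$
which is precisely the defining relation of a right braided module over the rank~$1$ braided system $(V_i,\sigma_{i,i})$, i.e.\ over the braided object~$V_i$. Thus, for each fixed~$i$, the datum of~$\rho_i$ together with the single relation indexed by $(i,i)$ is exactly a braided $(V_i,\sigma_{i,i})$-module structure on~$M$. The remaining relations, those indexed by $(i,j)$ with $i<j$, are by construction the compatibility relations~\eqref{eqn:MultiBrMod} between $\rho_i$ and $\rho_j$ referred to in the statement.

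Putting the two observations together I would conclude: to give a braided $(\oV,\osigma)$-module structure on~$M$ is to give the morphisms~$\rho_i$ and to verify~\eqref{eqn:MultiBrMod} for all $i\le j$; by the above this is the same as giving, for each~$i$, a braided $(V_i,\sigma_{i,i})$-module structure on~$M$ (the diagonal relations) together with, for each $i<j$, the check of their mutual compatibility (the off-diagonal relations). Since no further conditions are imposed and the two index ranges are disjoint and exhaustive, the two descriptions carry exactly the same information, which yields the claimed identification. There is no genuine obstacle here; the only point requiring a line of verification is that the $i=j$ instance of~\eqref{eqn:MultiBrMod} literally coincides with the $r=1$ case of the braided module definition, which is immediate upon setting $\sigma_{i,j}=\sigma_{i,i}$.
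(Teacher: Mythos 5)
Your proposal is correct and coincides with the paper's own (implicit) reasoning: the paper states this as an Observation requiring no proof precisely because, as you note, the defining relations~\eqref{eqn:MultiBrMod} indexed by $i\le j$ split disjointly into the diagonal instances $i=j$, which are exactly the rank~$1$ braided module conditions over each $(V_i,\sigma_{i,i})$, and the off-diagonal instances $i<j$, which are the stated compatibilities. Nothing further is needed.
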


\begin{observation}\label{rmk:MultiMod2}
In an additive category, $(\oV,\osigma)$-modules can also be viewed as modules over the associative algebra $\raisebox{.5mm}{$T(V)$}\big / \raisebox{-.5mm}{$\langle \sigma - \Id \rangle$}$, where $V=V_1\oplus V_2 \oplus\cdots\oplus V_r$ amalgamates all the components of our system, and $\langle \sigma - \Id \rangle$ is the ideal generated by the images of the maps $\sigma_{i,j} - \Id_i \otimes \Id_j \colon V_i \otimes V_j \to V_j \otimes V_i + V_i \otimes V_j \hookrightarrow V \otimes V$.
\end{observation}

The notions of right and left $(\oV,\osigma)$-modules coincide for the unit object $\II$. Condition~\eqref{eqn:MultiBrMod} takes in this case a simpler form
$(\rho_j \otimes \rho_i ) \sigma_{i,j} =\rho_i \otimes \rho_j \, \colon \, V_i \otimes V_j \to \II$.

\begin{definition}
A \emph{braided character} is a right (= left) $(\oV,\osigma)$-module structure on~$\II$.
\end{definition}

\begin{example}\label{rmk:single_br_char}
In a preadditive~$\C$, a braided character $\varepsilon_i$ on any~$V_i$ extended to other components by zero becomes a $(\oV,\osigma)$-character.
\end{example}

The invertibility of some of the $\sigma_{i,j}$ is helpful in extending braided structures. It also allows one to interchange the corresponding components of a braided system without changing the module category:

\begin{proposition}\label{thm:BrSystInv}
Take $(\oV,\osigma) \in \BrSyst_r(\C)$  with $ \sigma_{p,p+1}$ invertible for some~$p$.
 \begin{enumerate}
\item The family $(V_1, \ldots, V_{p-1}, V_{p+1}, V_p, V_{p+2},\ldots, V_r)$, equipped with the old $\sigma_{i,j}$ on the tensor products $V_i \otimes V_j$ with $(i,j) \neq (p+1,p)$ and with $\sigma_{p,p+1}^{-1}$ on $V_{p+1} \otimes V_p$, is a braided system, denoted by $s_p(\oV,\osigma)$.
\item The categories of braided modules for the original and the rearranged systems are equivalent: 
$\ModCat_{(\oV,\osigma)} \simeq \ModCat_{s_p(\oV,\osigma)}$.
\end{enumerate}
\end{proposition}

\begin{proof} Notation~\eqref{eqn:phi_i} is used throughout the proof.
 \begin{enumerate}
\item\label{it:perm1} One has to check four types of new instances of the colored YBE. 
\begin{enumerate}
\item On $V_i \otimes V_{p+1} \otimes V_p$ with $i < p$, the YBE reads
\[\sigma_{i,p+1}^2  \sigma_{i,p}^1  (\sigma_{p,p+1}^{-1})^2 = (\sigma_{p,p+1}^{-1})^{1}  \sigma_{i,p}^2  \sigma_{i,p+1}^1,\]
or equivalently
\[\sigma_{p,p+1}^1  \sigma_{i,p+1}^2  \sigma_{i,p}^1  = \sigma_{i,p}^2  \sigma_{i,p+1}^1  \sigma_{p,p+1}^2.\]
This is precisely the YBE on $V_i \otimes V_p \otimes V_{p+1}$ for the original system $(\oV,\osigma)$.

The remaining types are similar, and can be summarized as follows:

\item For $j > p+1$, the YBE on $V_{p+1} \otimes V_p \otimes V_j$ for $s_p(\oV,\osigma)$ is equivalent to the YBE on $V_p \otimes V_{p+1} \otimes V_j$ for $(\oV,\osigma)$.

\item  The YBE on $V_{p+1} \otimes V_{p+1} \otimes V_p$ for $s_p(\oV,\osigma)$ is equivalent to the YBE on $V_p \otimes V_{p+1} \otimes V_{p+1}$ for $(\oV,\osigma)$.

\item  The YBE on $V_{p+1} \otimes V_p \otimes V_p$ for $s_p(\oV,\osigma)$ is equivalent to the YBE on $V_p \otimes V_p \otimes V_{p+1}$ for $(\oV,\osigma)$.
\end{enumerate} 
 
\item
Given an object $M$ equipped with the morphisms $\rho_i \colon M\otimes V_i \to M$, the list of compatibility conditions~\eqref{eqn:MultiBrMod} one has to check for $(\oV,\osigma)$ differs from the list for $s_p(\oV,\osigma)$ only in the conditions for $i=p$, $j=p+1$: 
\begin{align*}
&\rho_{p+1}  (\rho_p \otimes \Id_{{p+1}})=\rho_p  (\rho_{p+1} \otimes \Id_{p}) (\Id_M\otimes \sigma_{p,p+1})\\
\text{versus}\qquad\qquad & \rho_p  (\rho_{p+1} \otimes \Id_{p})=\rho_{p+1}  (\rho_p \otimes \Id_{{p+1}}) (\Id_M\otimes \sigma_{p,p+1}^{-1}).
\end{align*}
The second one composed with the invertible morphism $\Id_M\otimes \sigma_{p,p+1}$ on the right yields the first one.
So the identity functor of~$\C$ and the permutation $\rho_p \leftrightarrow \rho_{p+1}$ of the components of $\overline{\rho}$ give the announced category equivalence. \qedhere
\end{enumerate}
\end{proof}

\begin{remark}\label{thm:BrSystInvSn}
More generally, fix a permutation $\theta \in S_r$, and take $(\oV,\osigma) \in \BrSyst_r(\C)$ with the $\sigma_{i,j}$ invertible for all~$i,j$ reversed by $\theta$. The family $(V_{\theta^{-1}(1)}, \ldots, V_{\theta^{-1}(r)})$, equipped with the old $\sigma_{i,j}$ on $V_i \otimes V_j$ with $\theta(i) < \theta(j)$ and with $\sigma_{i,j}^{-1}$ on the remaining couples, is a braided system, denoted by $\theta(\oV,\osigma)$. This yields a \emph{partial $S_r$-action} on $\BrSyst_r(\C)$ and equivalences between the corresponding braided module categories. Notations $s_p(\oV,\osigma)$ and $\theta(\oV,\osigma)$ are motivated by this remark.
\end{remark}

\begin{corollary}\label{thm:BrSystInvGlue}
 Let $(\oV,\osigma)$ be a braided system in an {additive} monoidal~$\C$, with $\sigma_{i,j}$ invertible for all $s\le i < j \le t$. Then one can glue the objects $V_s,\ldots,V_t$ together into $ V_{s:t}:= \bigoplus_{i=s}^t V_i$, and extend the braiding onto $(V_1,\ldots,V_{s-1},V_{s:t},$ $V_{t+1}, \ldots,V_r)$ by putting $\sigma|_{V_j \otimes V_i} := \sigma_{i,j}^{-1}$ for all $s\le i < j \le t$.
\end{corollary}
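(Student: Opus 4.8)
The plan is to define the extended braiding blockwise, exploiting the additivity of $\C$, and then to reduce every new instance of~\eqref{eqn:YB} to one that either already holds in $(\oV,\osigma)$ or is furnished by the partial $S_r$-action of Remark~\ref{thm:BrSystInvSn}.

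First I would use the distributivity of $\otimes$ over $\oplus$ in an additive monoidal category: for any object $X$ one has $V_{s:t}\otimes X \simeq \bigoplus_{i=s}^{t}(V_i\otimes X)$, and symmetrically on the other side, so that a morphism on a tensor power of $V_{s:t}$ amounts to a matrix of morphisms on the summands $V_a\otimes V_b\otimes\cdots$. I define the self-component $\sigma_{s:t,s:t}$ by its restriction to each summand $V_a\otimes V_b$, sending it to the summand $V_b\otimes V_a$ via
$$\beta_{a,b}:=\begin{cases}\sigma_{a,b}&\text{if } a\le b,\\ \sigma_{b,a}^{-1}&\text{if } a>b;\end{cases}$$
the second line makes sense precisely because of the invertibility hypothesis, and it agrees with the prescription $\sigma|_{V_j\otimes V_i}:=\sigma_{i,j}^{-1}$ of the statement. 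For an external index $k<s$ I set $\sigma_{k,s:t}|_{V_k\otimes V_i}:=\sigma_{k,i}$, and for $k>t$ I set $\sigma_{s:t,k}|_{V_i\otimes V_k}:=\sigma_{i,k}$; since $k<i$ (resp.\ $i<k$) these involve no inversion, so no invertibility is demanded of the external crossings.

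Next I would observe that, again by distributivity, each~\eqref{eqn:YB} to be verified for the glued family $(V_1,\ldots,V_{s-1},V_{s:t},V_{t+1},\ldots,V_r)$ splits as a direct sum of equalities, one for each colouring $(a,b,c)$ of the three strands. Because $V_{s:t}$ occupies a single slot, lying to the right of every $V_k$ with $k<s$ and to the left of every $V_k$ with $k>t$, an external colour is always the smallest or the largest of its triple, and is therefore never out of order; consequently each summand equality is a colored~\eqref{eqn:YB} on a triple $V_a\otimes V_b\otimes V_c$ in which every crossing with an external colour uses an \emph{uninverted} $\sigma$, while only crossings internal to the block $\{s,\ldots,t\}$ may use an inverse. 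When the block colours already appear in increasing order this is precisely an~\eqref{eqn:YB} of $(\oV,\osigma)$; otherwise it is the reordered~\eqref{eqn:YB} obtained by permuting the block components, valid since $\theta(\oV,\osigma)$ is a braided system by Remark~\ref{thm:BrSystInvSn} (the reversed pairs lie inside $\{s,\ldots,t\}$, where all $\sigma_{i,j}$ are invertible). For triples in which two block colours coincide (e.g.\ $V_a\otimes V_a\otimes V_c$ with $a>c$) one cannot separate them by a permutation, but the identity then follows from a sorted~\eqref{eqn:YB} by pre- and post-composing with invertible block-braidings, exactly the manipulation treating the coinciding-index cases in the proof of Proposition~\ref{thm:BrSystInv}. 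Reassembling the summands yields~\eqref{eqn:YB} for the glued system.

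The computations being nothing more than the reordering manipulations already carried out for Proposition~\ref{thm:BrSystInv}, the only genuine work is the bookkeeping of the previous paragraph: one must confirm that in every triple of the new family at most the block colours are misordered, so that each inverse that appears is an internal one licensed by the invertibility hypothesis, while external colours --- and hence the possibly non-invertible crossings $\sigma_{k,\cdot}$ and $\sigma_{\cdot,k}$ --- never require reordering. I expect this case analysis, rather than any single calculation, to be the main obstacle.
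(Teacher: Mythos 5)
Your proof is correct, and its technical core --- reducing every misordered instance of \eqref{eqn:YB} to a sorted one by pre- and post-composition with invertible block crossings --- is exactly the mechanism the paper relies on. The organization differs, however. The paper first reduces to the case $t=s+1$ of gluing two adjacent components and handles the general block by induction; at the two-component level it observes that the only instances of \eqref{eqn:YB} not already furnished by $(\oV,\osigma)$ and $s_p(\oV,\osigma)$ are the alternating triples $V_p\otimes V_{p+1}\otimes V_p$ and $V_{p+1}\otimes V_p\otimes V_{p+1}$, which it settles by the same manipulation as in Points (a)/(b) of the proof of Proposition~\ref{thm:BrSystInv} with $i=p$ (or $j=p+1$). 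You avoid induction entirely: the direct-sum splitting of each \eqref{eqn:YB} into colorings, the observation that external colors always sit at the extremes of their triple and hence are never misordered, the appeal to Remark~\ref{thm:BrSystInvSn} for distinct misordered block colors, and the composition trick for triples with a repeated block color together give a one-shot verification. The trade-off is that your route leans on the stronger Remark~\ref{thm:BrSystInvSn} (itself obtained by iterating Proposition~\ref{thm:BrSystInv}), whereas the paper's induction needs only the adjacent-transposition case; in exchange, you make explicit the bookkeeping that the paper's induction keeps implicit --- which colorings can actually occur, and the fact that only off-diagonal block crossings are ever inverted, consistent with the remark after the corollary that invertibility of the $\sigma_{i,i}$ is never required.
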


Note that the invertibility of $\sigma_{i,i}$ is not required here even for $s\le i \le t$. 

\begin{proof}
We consider only the case $s=t-1=:p$; the general case follows by induction. The colored YBEs appearing here come from the systems $(\oV,\osigma)$ and $s_p(\oV,\osigma)$, except for the YBE on $V_p \otimes V_{p+1} \otimes V_p$ and on $V_{p+1} \otimes V_p \otimes V_{p+1}$. To handle these last two cases, observe that the argument from the proof of Proposition~\ref{thm:BrSystInv}, Point~\ref{it:perm1} remains valid for $i=p$ and $j=p+1$.
\end{proof}

A particular case of Corollary~\ref{thm:BrSystInvGlue} yields the \emph{gluing procedure} for Yang--Baxter operators (or, in our terms, for braided objects), studied by Majid and Markl~\cite{MajidMarkl}.

\section{A homology theory for braided systems}\label{sec:BrSystemHomology}

We now generalize the \emph{braided (co)homology} theory, developed in~\cite{Lebed1} for braided objects in~$\C$, to braided systems $(\oV,\osigma)$. In this section $\C$ is additive monoidal. In particular, the collection $\osigma$ assembles into a \emph{partial braiding}, still denoted by $\osigma$, on
\[V:=V_1\oplus V_2\oplus\cdots\oplus V_r,\]
and the family $\overline{\rho}$ defining a right $(\oV,\osigma)$-module $M$ assembles into $\rho \colon M \otimes V \to M$.

We first show that the collection~$\osigma$ suffices for a partial version of Rosso's quantum shuffle (co)products \cite{Rosso1Short,Rosso2}. Recall that the \emph{shuffle sets} are the permutation sets
\[Sh_{p_1,p_2,\ldots, p_k}=\Bigg\{ \theta \in S_{p_1+p_2+\cdots+p_k}  \;
\begin{array}{|l}
\scriptstyle \theta(1)<\theta(2)<\ldots<\theta(p_1), \\
\scriptstyle \theta(p_1+1)<\ldots<\theta(p_1+p_2), \\
\scriptstyle \ldots,\,
\scriptstyle \theta(p+1)<\ldots<\theta(p+p_k)
 \end{array}
 \Bigg\} \]
with $p=p_1+\cdots+p_{k-1}$. Think of permuting $p_1+p_2+\cdots+p_{k}$ elements preserving the order within $k$ consecutive blocks of size $p_1, \ldots, p_k$, just like when shuffling cards.

Recall further the projection $B_n^+ \twoheadrightarrow S_n$ sending a generator $\sigma_i$ to the corresponding generator $s_i$, and its set-theoretic \emph{Matsumoto section}
\begin{align}
 S_n & \longhookrightarrow  B_n^+, \notag\\
 \theta = s_{i_1}s_{i_2}\cdots s_{i_k}& \longmapsto \sigma_{i_1}\sigma_{i_2}\cdots \sigma_{i_k}, \notag \end{align}
where $s_{i_1}s_{i_2}\cdots s_{i_k}$ is any of the shortest words representing $\theta\in S_n$.

\begin{notation}
We denote by $\T_\theta$ the image of $\theta \in S_n$ under this map.
\end{notation}

We also need a partial $B_d^+$-action on~$V^d$ for  $(\oV,\osigma)\in \BrSyst_r(\C)$. For a generator~$\sigma_i$ of~$B_d^+$ and a summand $V_{k_1} \otimes \ldots \otimes V_{k_d}$ of~$V^d$, $k_1 \le \ldots \le k_d$, it reads
\begin{align*}
\sigma_i \qquad  \longmapsto & \qquad \sigma_{k_i,k_{i+1}}^i \quad = \quad \begin{tikzpicture}[baseline=2pt,scale=0.4]
\draw (-3,0)--(-3,1);
\draw (-1,0)--(-1,1);
\draw [ colori, rounded corners](0,0)--(0,0.25)--(0.4,0.4);
\draw [ colori, rounded corners](0.6,0.6)--(1,0.75)--(1,1);
\draw [ colorj, rounded corners](1,0)--(1,0.25)--(0,0.75)--(0,1);
\draw (2,0)--(2,1);
\draw (4,0)--(4,1);
\node  at (-3,0) [below] {$\scriptstyle k_1$};
\node  at (-1.7,0) [below] {$\cdots$};
\node  at (-0.2,0) [colori, below] {$\scriptstyle k_i$};
\node  at (1.2,0) [colorj, below] {$\scriptstyle k_{i+1}$};
\node  at (2.7,0) [below] {$\cdots$};
\node  at (4,0) [below] {$\scriptstyle k_d$};
\end{tikzpicture}.
\end{align*}
Here $\sigma_{k_i,k_{i+1}}^i \in \Hom_\C(V_{k_1} \otimes \cdots \otimes V_{k_d},V_{k_1} \otimes \cdots \otimes V_{k_{i+1}} \otimes V_{k_i} \otimes \cdots \otimes V_{k_d})$. This action agrees with the usual graphical depiction of braids from~$B_d^+$.

\begin{notation}
The partial action described above is denoted by $B_d^+ \ni b \mapsto b^{\osigma}$.
\end{notation} 

\begin{definition}
A degree~$d$ \emph{(reverse) ordered tensor product} for $(\oV,\osigma)\in \BrSyst_r(\C)$ is a tensor product $V_{k_1} \otimes \ldots \otimes V_{k_d}$ with $k_1 \le \ldots \le k_d$ (respectively, $k_1 \ge \ldots \ge k_d$). The direct sum of all such products is denoted by $T(\oV)_d^\rightarrow$ (respectively, $T(\oV)_d^\leftarrow$).
\end{definition}

The $T(\oV)_d^\rightarrow$ sum up to $T(\oV)^\rightarrow :=T(V_1)\otimes T(V_2)\otimes\cdots \otimes T(V_r)$, and the $T(\oV)_d^\leftarrow$ sum up to $T(\oV)^\leftarrow :=T(V_r)\otimes T(V_{r-1})\otimes\cdots \otimes T(V_1)$.

Armed with these notations, we give multi-versions of quantum shuffle operations.
 
\begin{definition}\label{def:qu_sh}
Take a braided system $(\oV,\osigma)$ in $\BrSyst_r(\C)$. 
\begin{itemize}
\item The \emph{multi-quantum shuffle product} is defined by
\begin{equation}\label{eqn:qu_sh}
 \osh {}_{p,q}= \sum_{\theta\in Sh_{p,q}} (\T_\theta)^{\osigma} \quad : \quad  {T(\oV)_p^\leftarrow}\otimes T(\oV)_q^\leftarrow \rightarrow  T(\oV)_{p+q}^\leftarrow,
\end{equation} 
where $(\T_\theta)^{\osigma} (W \otimes U)$ is declared zero when it is undefined or misses the $T(\oV)_{p+q}^\leftarrow$ part of $V^{p+q}$.
\item Dually, the \emph{multi-quantum shuffle coproduct} is defined by
\begin{equation}\label{eqn:qu_cosh}
\ocsh {}_{p,q}= \sum_{\theta\in Sh_{p,q}} (\T_{\theta^{-1}})^{\osigma}  \quad : \quad  T(\oV)_{p+q}^\rightarrow\rightarrow T(\oV)_p^\rightarrow \otimes T(\oV)_q^\rightarrow.
\end{equation}
\item Replacing $Sh_{p,q}$ with $Sh_{p_1, \ldots, p_k}$, one gets morphisms $\osh {}_{p_1, \ldots, p_k}$ and $\ocsh {}_{p_1, \ldots, p_k}$.
\end{itemize}
\end{definition}

Condition~\eqref{eqn:qu_sh} should be thought of as the dual of the more intuitive condition~\eqref{eqn:qu_cosh}. 

For an ordered tensor products~$W$ in $T(\oV)_{p+q}^\rightarrow$, its image $\ocsh {}_{p,q}(W)$ lives in several summands of $T(\oV)_p^\rightarrow \otimes T(\oV)_q^\rightarrow$. That is why $\C$ has to be additive. The case of rank $r=1$ is exceptional: one needs a preadditive~$\C$ only.

\begin{proposition}\label{prop:MultiShuffle}
Morphisms \eqref{eqn:qu_sh}-\eqref{eqn:qu_cosh} are well defined, and give an associative multiplication (respectively, a coassociative comultiplication). 
\end{proposition}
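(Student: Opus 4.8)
The plan is to prove all three assertions simultaneously by reducing to the three-fold shuffle and then invoking two classical combinatorial facts about shuffle sets and the Matsumoto section, while carefully tracking the two features special to our \textbf{partial}, \textbf{colored} setting: whether a given $(\T_\theta)^{\osigma}$ is \emph{defined} on a summand, and whether its output lands in the (reversely) ordered part. I treat the multiplication~\eqref{eqn:qu_sh} in detail; the comultiplication~\eqref{eqn:qu_cosh} then follows by the formally dual (arrow-reversing, $\theta \mapsto \theta^{-1}$) argument, with $T(\oV)^\rightarrow$ replacing $T(\oV)^\leftarrow$.

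First I would settle well-definedness. The only subtlety is that $(\T_\theta)^{\osigma}$ is computed through a word $\sigma_{i_1}\cdots \sigma_{i_m}$ in $B_d^+$ representing $\theta$, so I must check independence of this choice. By Matsumoto's theorem any two reduced words for $\theta$ are related by the defining relations of $B_d^+$: far commutativity $\sigma_i\sigma_j=\sigma_j\sigma_i$ (for $|i-j|\ge 2$) is respected automatically, since the two braiding components then act on disjoint tensor factors, while the braid relation $\sigma_i\sigma_{i+1}\sigma_i=\sigma_{i+1}\sigma_i\sigma_{i+1}$ is respected precisely because of~\eqref{eqn:YB}: a short inspection shows that on any summand both sides require the same three adjacent indices to be non-decreasing (so the two composites are defined exactly together) and then agree by the colored YBE on the corresponding triple $V_a\otimes V_b\otimes V_c$ with $a\le b\le c$. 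Hence the partial $B_d^+$-action, and with it each $(\T_\theta)^{\osigma}$, is well defined; the selection condition in~\eqref{eqn:qu_sh} is then a well-posed condition on $\theta$, and the additivity of $\C$ makes the (finite) sum a bona fide morphism, several $\theta$ being allowed to contribute to the same source-target pair when indices coincide.

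For associativity I would prove the stronger statement that both iterated products equal the single three-fold shuffle $\osh_{p,q,r}:=\sum_{\omega\in Sh_{p,q,r}}(\T_\omega)^{\osigma}$ (the $k=3$ instance, read with the same defined-ness and landing conventions). The engine is the classical length-additive bijection
\[
Sh_{p,q}\times Sh_{p+q,r}\ \xrightarrow{\ \sim\ }\ Sh_{p,q,r},\qquad (\theta,\psi)\longmapsto \psi\cdot(\theta\times\mathrm{id}_r),
\]
for which $\ell\big(\psi\cdot(\theta\times\mathrm{id}_r)\big)=\ell(\theta)+\ell(\psi)$, together with the symmetric bijection through $Sh_{q,r}\times Sh_{p,q+r}$. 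Length-additivity forces the Matsumoto section to be multiplicative on these products, so the braid $\T_\omega$ splits into a $\T_\theta$-part acting on the first $p+q$ strands (executed first) followed by a $\T_\psi$-part. Consequently $(\T_\omega)^{\osigma}=(\T_\psi)^{\osigma}\circ\big((\T_\theta)^{\osigma}\otimes\Id\big)$ on every summand where the left-hand side is defined, and the intermediate state reached after the $\T_\theta$-part lies in $T(\oV)_{p+q}^\leftarrow\otimes T(\oV)_{r}^\leftarrow$. Summing over $\omega$ and reorganizing via the bijection reproduces $\osh_{p+q,r}\circ(\osh_{p,q}\otimes\Id)$; the symmetric bijection yields $\osh_{p,q+r}\circ(\Id\otimes\osh_{q,r})$, proving associativity.

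The main obstacle, and the only point at which the argument departs from the classical (totally defined) quantum shuffle, is matching the partial defined-ness and the ``lands in $T(\oV)^\leftarrow$'' conditions across this decomposition: I must verify that $(\T_\omega)^{\osigma}$ is defined on $W_1\otimes W_2\otimes W_3$ with reversely ordered output \emph{if and only if} the factorization passes through a reversely ordered intermediate on which both $(\T_\theta)^{\osigma}$ and $(\T_\psi)^{\osigma}$ are defined and reversely-ordered-valued. This is where the colored constraint $i\le j$ on $\sigma_{i,j}$ interacts with the descending/ascending bookkeeping, and I expect to dispatch it by reading off, generator by generator, that each adjacency condition in the word for $\T_\omega$ is exactly the conjunction of the corresponding conditions for $\T_\theta$ and $\T_\psi$, so that the two finite sums match term by term. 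Finally, coassociativity of~\eqref{eqn:qu_cosh} is obtained by the dual argument: replacing each $\theta$ by $\theta^{-1}$ reverses all arrows and interchanges the roles of $T(\oV)^\rightarrow$ and $T(\oV)^\leftarrow$, while the same two combinatorial facts about $Sh_{p,q,r}$ give the three-fold decomposition of $\ocsh_{p,q,r}$ through both bracketings.
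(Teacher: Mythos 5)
Your proposal is correct, and at its core it follows the same route as the paper --- but where the paper disposes of well-definedness in one sentence (on reversely ordered inputs every crossing that occurs is a $\sigma_{i,j}$ with $i\le j$) and then simply cites the rank~$1$ references for (co)associativity, you actually inline that classical argument: the length-additive bijection $Sh_{p,q}\times Sh_{p+q,r}\simeq Sh_{p,q,r}$, multiplicativity of the Matsumoto section on length-additive products, and the resulting factorization $(\T_\omega)^{\osigma}=(\T_\psi)^{\osigma}\circ((\T_\theta)^{\osigma}\otimes\Id)$. You moreover treat the two features that are genuinely new in the multi-colored partial setting and that the cited rank~$1$ proofs do not cover: word-independence of the partial $B_d^+$-action (your check that both sides of a braid relation are defined exactly together and then agree by the colored YBE is correct), and the matching of definedness and landing conditions between the iterated and three-fold shuffles. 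What the paper's approach buys is brevity; what yours buys is a self-contained proof in which the ``partial'' bookkeeping is made explicit. One refinement is needed to close your sketch: reading conditions ``generator by generator'' settles only the \emph{definedness} half of the matching, since ``lands in $T(\oV)^\leftarrow$'' is a global condition on the output, not an adjacency condition on the braid word. The nontrivial direction (final output reversely ordered $\Rightarrow$ intermediate output of $(\T_\theta)^{\osigma}$ reversely ordered, so that the pair $(\theta,\psi)$ really does contribute to the iterated product) follows instead from the observation that $\psi\in Sh_{p+q,r}$ preserves the relative order of the first $p+q$ strands: the color sequence of the intermediate $(\T_\theta)^{\osigma}(W_1\otimes W_2)$ is therefore a subsequence of the final, descending, color sequence, hence itself descending; the symmetric remark handles the other bracketing. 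With that observation your term-by-term identification of both iterated products with $\osh{}_{p,q,r}$ goes through exactly as you set it up, and the dual argument for $\ocsh{}_{p,q}$ is unproblematic.
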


\begin{proof}
When (reverse) ordered products are fed into formulas \eqref{eqn:qu_sh}-\eqref{eqn:qu_cosh}, the braiding~$\osigma$ is applied only to products $V_i\otimes V_j$ with $i \le j$, on which it is defined. The (co)associativity is proved as in the rank~$1$ case (see \cite[Theorem 1]{Lebed1}).
\end{proof}

We now explain what we mean by a \emph{homology theory} for a braided system $(\oV,\osigma)$.

\begin{definition}\label{def:DiffCat}
\begin{itemize}
\item  \emph{A  differential} for $(\oV,\osigma)$ is a morphism family $\{\, d_n \colon T(\oV)_n^\rightarrow \to T(\oV)_{n-1}^\rightarrow \,\}_{n>0}$ satisfying $d_{n-1} d_n =0$ for all $n>1$.

\item \emph{A bidifferential} for $(\oV,\osigma)$ consists of $2$ families $\{\, d_n,d'_n\colon T(\oV)_n^\rightarrow \to T(\oV)_{n-1}^\rightarrow\, \}_{n>0}$  satisfying $d_{n-1}  d_n = d'_{n-1}  d'_n = d'_{n-1}  d_n +d_{n-1}  d'_n = 0$ for all $n>1$. 

\item Replacing $T(\oV)_n^\rightarrow$ with $M \otimes T(\oV)_n^\rightarrow \otimes N$ (for some objects~$M$ and~$N$) above, one gets the notion of (bi)differentials with \emph{coefficients} in~$M$ and~$N$.
\end{itemize}
\end{definition}

Everything is now ready for constructing a multi-version of braided complexes.

\begin{theorem}\label{thm:BraidedSimplHomCat}
Take a braided system $(\oV,\osigma)$ in an additive monoidal category~$\C$. Let $(M,\overline{\rho})$ and $(N,\overline{\lambda})$ be a right and, respectively, left $(\oV,\osigma)$-modules. The morphisms 
\begin{align*}
({^{\rho}}\! d)_n &= ({\rho} \otimes \Id_{T(\oV)_{n-1}^\rightarrow \otimes N}) (\Id_M \otimes \Ocsh {}_{1,n-1} \otimes \Id_N),\\
(d^{\lambda})_n &= (-1)^{n-1}(\Id_{M \otimes T(\oV)_{n-1}^\rightarrow} \otimes \lambda) (\Id_M \otimes \Ocsh {}_{n-1,1} \otimes \Id_N)
\end{align*}
from $M \otimes T(\oV)_{n}^\rightarrow \otimes N$ to $M \otimes T(\oV)_{n-1}^\rightarrow \otimes N$ then define a bidifferential with coefficients in~$M$ and~$N$. (Here~$-\osigma$ is the braiding obtained from~$\osigma$ as in Observation~\ref{obs:NegSigma}.)
\end{theorem}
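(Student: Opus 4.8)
The plan is to reduce each of the three defining identities of a bidifferential to a combination of three ingredients: the coassociativity of the multi-quantum coshuffle (established in the preceding proposition, and valid for $-\osigma$ as well, since Observation~\ref{obs:NegSigma} guarantees that $-\osigma$ is again a braiding on~$\oV$); the interchange law of the monoidal product, which lets $\rho$ and $\lambda$ commute past any coshuffle acting on tensor factors disjoint from the ones they touch; and the braided module axioms~\eqref{eqn:MultiBrMod} for $(M,\overline{\rho})$, together with their left-handed analogue for $(N,\overline{\lambda})$.

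First I would treat $({^{\rho}}\!d)_{n-1}\circ({^{\rho}}\!d)_n$. Writing $({^{\rho}}\!d)_n$ as $\rho$ applied to $M$ and the leftmost degree-one slot split off by $\Ocsh{}_{1,n-1}$, the interchange law lets the coshuffle $\Ocsh{}_{1,n-2}$ of $({^{\rho}}\!d)_{n-1}$ slide past this first copy of $\rho$, since the two act on disjoint factors. The composite then becomes $[\rho\circ(\rho\otimes\Id)]$, acting on $M$ and the two leftmost slots, precomposed with $\Id_M\otimes((\Id\otimes\Ocsh{}_{1,n-2})\circ\Ocsh{}_{1,n-1})\otimes\Id_N$. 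Coassociativity rewrites the inner coshuffle as $\Ocsh{}_{1,1,n-2}=(\Ocsh{}_{1,1}\otimes\Id)\circ\Ocsh{}_{2,n-2}$, whose factor $\Ocsh{}_{1,1}$ restricts on an ordered pair $V_i\otimes V_j$ (with $i\le j$) to $\Id-\sigma_{i,j}$ (the trivial shuffle contributes $+1$, the single crossing of $-\osigma$ contributes $-1$). The braided module axiom~\eqref{eqn:MultiBrMod} says precisely that $\rho\circ(\rho\otimes\Id)$ is left unchanged when precomposed with $\Id_M\otimes\sigma_{i,j}$, so that $[\rho\circ(\rho\otimes\Id)]\circ(\Id_M\otimes\Ocsh{}_{1,1})=0$, whence $({^{\rho}}\!d)_{n-1}\circ({^{\rho}}\!d)_n=0$. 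The identity $(d^{\lambda})_{n-1}\circ(d^{\lambda})_n=0$ is the mirror image: here $\lambda$ acts on $N$ and the rightmost slots, the relevant coassociativity is $\Ocsh{}_{n-2,1,1}=(\Id\otimes\Ocsh{}_{1,1})\circ\Ocsh{}_{n-2,2}$, and the left braided module axiom kills the surviving factor $\Ocsh{}_{1,1}$; the sign prefactors then merely multiply an already-vanishing expression.

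For the cross term I would exploit that $\rho$ acts on the left end ($M$ and the leftmost slot) while $\lambda$ acts on the right end ($N$ and the rightmost slot), so they touch disjoint factors and commute by interchange. Applying coassociativity in the two forms $\Ocsh{}_{1,n-2,1}=(\Id\otimes\Ocsh{}_{n-2,1})\circ\Ocsh{}_{1,n-1}=(\Ocsh{}_{1,n-2}\otimes\Id)\circ\Ocsh{}_{n-1,1}$ shows that both $(d^{\lambda})_{n-1}\circ({^{\rho}}\!d)_n$ and $({^{\rho}}\!d)_{n-1}\circ(d^{\lambda})_n$, once their sign prefactors are stripped, equal the same morphism, namely $\rho$ on the left composed with $\lambda$ on the right, precomposed with $\Id_M\otimes\Ocsh{}_{1,n-2,1}\otimes\Id_N$. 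Their prefactors are $(-1)^{(n-1)-1}=(-1)^{n-2}$ and $(-1)^{n-1}$ respectively, which sum to zero; hence $(d^{\lambda})_{n-1}\circ({^{\rho}}\!d)_n+({^{\rho}}\!d)_{n-1}\circ(d^{\lambda})_n=0$.

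I expect the main obstacle to be bookkeeping rather than conceptual: keeping track of the internal signs introduced by replacing $\osigma$ with $-\osigma$ inside each $\Ocsh$, of the external $(-1)^{n-1}$ prefactors, and of which ordered summands of $V^{\otimes n}$ are actually hit at each stage, so that the partial braiding and the module axioms are only ever invoked on admissible pairs $V_i\otimes V_j$ with $i\le j$. Once the reduction to the factor $\Ocsh{}_{1,1}=\Id-\osigma$ is in place, each vanishing is immediate, and the statement follows uniformly for all $n>1$, including the boundary cases where a block has size zero and the corresponding coshuffle degenerates to the identity.
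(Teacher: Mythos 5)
Your proposal is correct and follows essentially the same route as the paper's proof: coassociativity of $\Ocsh$ to extract the factor $\Ocsh{}_{1,1}$, the braided module axiom~\eqref{eqn:MultiBrMod} reformulated as $\rho\circ(\rho\otimes\Id_V)\circ(\Id_M\otimes\Ocsh{}_{1,1})=0$ (and its left analogue) to kill the squares, and the reduction of both cross terms to $(\rho\otimes\Id\otimes\lambda)\circ\Ocsh{}_{1,n-2,1}$ with opposite signs $(-1)^{n-2}$ and $(-1)^{n-1}$. No gaps; the bookkeeping you flag is handled exactly as you anticipate.
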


\begin{proof}
Our verifications use (A) the coassociativity of~$\Ocsh$ (Proposition~\ref{prop:MultiShuffle}), and (B) the definition of braided modules, reformulated in an additive~$\C$ as
\begin{align*}
\rho  (\rho \otimes \Id_V) (\Id_M \otimes \Ocsh {}_{1,1}) &= 0,&
\lambda  (\Id_V \otimes \lambda) (\Ocsh{}_{1,1} \otimes \Id_N) &= 0.
\end{align*}
Concretely, writing $\Ocsh$ instead of $\Id_M \otimes\Ocsh\otimes \Id_N$ or $\Ocsh\otimes \Id_N$  for brevity, one has

\begin{align*}
({^{\rho}}\! d)_{n-1}  ({^{\rho}}\! d)_n  &= (\rho \otimes \Id_{\cdots})  \Ocsh{}_{1,n-2}  (\rho \otimes \Id_{\cdots})  \Ocsh{}_{1,n-1}  \\
  &= (\rho \otimes \Id_{\cdots}) (\rho \otimes \Id_{\cdots}) (\Id_{M\otimes V} \otimes \Ocsh{}_{1,n-2})  \Ocsh{}_{1,n-1}  \\
  & \overset{(A)}{=} (\rho \otimes \Id_{\cdots}) (\rho \otimes \Id_{\cdots}) (\Id_M \otimes \Ocsh{}_{1,1}\otimes \Id_{\cdots})  \Ocsh{}_{2,n-2}  \\
& = ((\rho  (\rho \otimes \Id_V) (\Id_M \otimes \Ocsh{}_{1,1}))\otimes \Id_{\cdots})  \Ocsh{}_{2,n-2}   \overset{(B)}{=} 0,
\end{align*}
and similarly for~$d^\lambda$. In the same way, one calculates 
\begin{align*}
&(d^\lambda)_{n-1}  ({^{\rho}}\! d)_n  = (-1)^{n-2} (\rho \otimes \Id \otimes \lambda)  \Ocsh{}_{1,n-2,1} = - ({^{\rho}}\!d)_{n-1}  ( d^\lambda)_n. \qedhere
\end{align*}
\end{proof}

The differential $({^\rho}\! d)_n$ is a signed sum (due to the negative braiding $-\osigma$) of the form  $\sum_{i=1}^n (-1)^{i-1} ({^\rho}\! d)_{n;i}$. The term $({^\rho}\! d)_{n;i}$ is presented in Fig.~\ref{pic:BrDiffCoeffs}. Its sign can be read off its diagram as the crossing number. Similar holds for $(d^\lambda)_n$.
\begin{figure}\centering
\begin{tikzpicture}[xscale=0.6,yscale=0.5]
 \node at (-2,1.5) {$ ({^\rho}\! d)_{n;i} =$}; 
 \draw [colorM, ultra thick] (0,0) -- (0,3);
 \draw [colorN, ultra thick] (8,0) -- (8,3);
 \draw [colori,thick] (1,0) -- (1,1.3);
 \draw [colori,thick] (1,1.7) -- (1,3);
 \draw [colorj,thick] (3,0) -- (3,0.3);
 \draw [colorj,thick] (3,0.7) -- (3,3);
 \draw [colork,thick] (4,0) -- (0,2);
 \draw [colorl,thick] (5,0) -- (5,3); 
 \draw [colorn,thick] (7,0) -- (7,3); 
 \node at (0.9,1.65) [right]{$ \sigma_{{\color{colori} k_1},{\color{colork} k_i}}$};
 \node at (2.9,0.65) [right]{$ \sigma_{{\color{colorj} k_{i-1}},{\color{colork} k_i}}$};
 \node at (0,2) [left]{$ \rho_{\color{colork} k_i}$};
 \fill[colork] (0,2) circle (0.1);
 \node at (0,0) [below,colorM] {$\scriptstyle  M$};
 \node at (8,0) [below,colorN] {$\scriptstyle  N$};
 \node at (1,0) [below,colori]{$\scriptstyle {k_1}$};
 \node at (3,0) [below,colorj]{$\scriptstyle {k_{i-1}}$};
 \node at (2,0) [below]{$\scriptstyle \ldots$};
 \node at (4,0) [below,colork]{$\scriptstyle {k_i}$};
 \node at (5,0) [below,colorl]{$\scriptstyle {k_{i+1}}$};
 \node at (6,0) [below]{$\scriptstyle \ldots$};
 \node at (7,0) [below,colorn]{$\scriptstyle {k_n}$};
\end{tikzpicture}
\caption{Braided left differential}\label{pic:BrDiffCoeffs}
\end{figure}
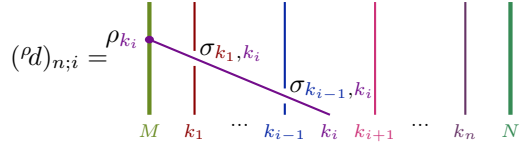

\begin{corollary}
Any $\ZZ$-linear combination of the families $({^{\rho}}\! d)_n$ and $(d^{\lambda})_n$ from the theorem is a differential for $(\oV,\osigma)$  with {coefficients} in~$M$ and~$N$.
\end{corollary}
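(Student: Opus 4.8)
The plan is to deduce the corollary directly from the three relations packaged into the notion of a bidifferential, which Theorem~\ref{thm:BraidedSimplHomCat} establishes for the families $d_n := ({^{\rho}}\! d)_n$ and $d'_n := (d^{\lambda})_n$. By Definition~\ref{def:DiffCat}, these relations read $d_{n-1}\circ d_n = d'_{n-1}\circ d'_n = d'_{n-1}\circ d_n + d_{n-1}\circ d'_n = 0$ for all $n>1$, and these are exactly the ingredients the verification will consume.

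First I would fix integers $a,b \in \ZZ$ and form the family $e_n := a\, d_n + b\, d'_n$. Since $\C$ is additive, composition of morphisms is $\ZZ$-bilinear, so I can expand $e_{n-1}\circ e_n$ into
\begin{equation*}
e_{n-1}\circ e_n = a^2\,(d_{n-1}\circ d_n) + ab\,(d_{n-1}\circ d'_n + d'_{n-1}\circ d_n) + b^2\,(d'_{n-1}\circ d'_n).
\end{equation*}
The two diagonal terms carry the coefficients $a^2$ and $b^2$ and multiply $d_{n-1}\circ d_n$ and $d'_{n-1}\circ d'_n$, which vanish by the first two bidifferential relations; the mixed term carries $ab$ and multiplies $d_{n-1}\circ d'_n + d'_{n-1}\circ d_n$, which vanishes by the third. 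Hence $e_{n-1}\circ e_n = 0$ for all $n>1$, so $\{e_n\}_{n>0}$ is a differential for $(\oV,\osigma)$ with coefficients in~$M$ and~$N$, as claimed.

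I expect no genuine obstacle here: the entire homological content is already carried by Theorem~\ref{thm:BraidedSimplHomCat}, and what remains is the elementary identity above together with $\ZZ$-bilinearity of composition in an additive category. The only point I would make explicit is that the cross terms are killed by a \emph{single} relation precisely because the bidifferential axiom bundles $d'_{n-1}\circ d_n + d_{n-1}\circ d'_n$ together rather than demanding each summand vanish separately; this is essential, since in general neither $d_{n-1}\circ d'_n$ nor $d'_{n-1}\circ d_n$ is zero on its own.
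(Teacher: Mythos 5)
Your proof is correct and is precisely the (implicit) argument the paper relies on: the corollary is stated without proof as an immediate consequence of the bidifferential relations in Theorem~\ref{thm:BraidedSimplHomCat}, and your expansion $e_{n-1}\circ e_n = a^2(d_{n-1}\circ d_n) + ab(d_{n-1}\circ d'_n + d'_{n-1}\circ d_n) + b^2(d'_{n-1}\circ d'_n) = 0$ via $\ZZ$-bilinearity of composition in the additive category~$\C$ is exactly the intended verification. Your closing remark that the mixed terms are killed only jointly, by the bundled relation $d'_{n-1}\circ d_n + d_{n-1}\circ d'_n = 0$, correctly identifies why the bidifferential axiom is stated in that form.
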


The {(bi)differentials} from the above theorem and corollary are called \emph{braided}.

\begin{remark}
\begin{itemize}
\item Braided differentials are \emph{functorial}. Concretely, take systems $(\oV,\osigma)$ and $(\oV',\osigma')$; braided modules $(M,\overline{\rho}) \in \ModCat_{(\oV,\osigma)}$, $(N,\overline{\lambda})\in {_{(\oV,\osigma)}}\!\ModCat$, and similarly for $(\oV',\osigma')$; braided morphism  $\of \colon (\oV,\osigma) \to (\oV',\osigma')$; and morphisms ${\varphi} \colon M \to M'$, ${\psi} \colon N \to N'$, compatible with~$\of$ in the sense of $\rho'_i  (\varphi \otimes f_i) = \varphi  \rho_i$ and $\lambda'_i  (f_i \otimes \psi) = \psi  \lambda_i$ for all~$i$. Then one has the intertwining diagram
\[\ \xymatrix @!0 @R=1cm @C=6cm{
    M \otimes T(\oV)_{n}^\rightarrow \otimes N \ar_{d_n}[d] \ar^{\varphi \otimes \of^{\otimes n} \otimes \psi}[r]  & M' \otimes T(\oV')_{n}^\rightarrow \otimes N' \ar_{d'_n}[d] \\
    M \otimes T(\oV)_{n-1}^\rightarrow \otimes N \ar^{\varphi \otimes \of^{\otimes n-1} \otimes \psi}[r] & M' \otimes T(\oV')_{n-1}^\rightarrow \otimes N' } \]
\item There is a dual \emph{cohomology theory} for $(\oV,\osigma)$ with coefficients in braided {co}modules. Here one has to work with $T(\oV)_n^\leftarrow$, since a braiding on $(V_1, \ldots, V_r)$ in~$\C$ corresponds to a braiding on the reversed system $(V_r, \ldots, V_1)$ in~$\C^{\op}$.
\item Braided bidifferentials refine to a \emph{precubical structure}, enriched with degeneracies if the braided system carries a ``good'' comultiplication (i.e., compatible with~$\osigma$ and $\osigma$-cocommutative); see~\cite{Lebed1} for details in the braided object case.
\item Braided differentials $({^{\rho}}\! d)_n$ (or $(d^{\lambda})_n$) can be defined with coefficients on one side only, i.e., on $M \otimes T(\oV)_{n}^\rightarrow$ (or $T(\oV)_{n}^\rightarrow \otimes N$).
\end{itemize}
\end{remark}

\begin{notation}\label{not:ossigma}
The obvious morphism from $(V_{i_1} \otimes \cdots \otimes V_{i_s})\otimes (V_{j_1} \otimes \cdots \otimes V_{j_t})$ to $(V_{j_1} \otimes \cdots \otimes V_{j_t})\otimes (V_{i_1} \otimes \cdots \otimes V_{i_s})$, induced by~$\osigma$ and diagrammatically presented as \begin{tikzpicture}[xscale=0.25,yscale=0.2]
\node  at (0.5,3.6){ };
\draw (6,0)--(2.5,3.5);
\draw (5,0)--(1.5,3.5);
\draw (1,0)--(2.8,1.8);
\draw (3.2,2.2)--(3.3,2.3);
\draw (3.7,2.7)--(4.5,3.5);
\draw (2,0)--(3.3,1.3);
\draw (3.7,1.7)--(3.8,1.8);
\draw (4.2,2.2)--(5.5,3.5);
\draw (3,0)--(3.8,0.8);
\draw (4.2,1.2)--(4.3,1.3);
\draw (4.7,1.7)--(6.5,3.5);
\end{tikzpicture}, is denoted by $\ossigma$. (Here we suppose $i_n \le j_m$ for all $n,m$, so that~$\osigma$ is applicable to $V_{i_n} \otimes V_{j_m}$.) 
\end{notation}

\begin{proposition}\label{thm:adjoint_multi_coeffs}
Take a braided system $(\oV,\osigma)\in \BrSyst_r(\C)$ and cut it at some level~$t$,  $1 \le t \le r$. That is, consider the $(1,t)$-subsystem $(\oV,\osigma)[1,t]$. Denote it by $(\oV',\osigma)$. Take also a braided module $(M,\overline{\rho}) \in \ModCat_{(\oV,\osigma)}$.
\begin{enumerate}
\item For any $n$, $M\otimes T(\oV')_n^\rightarrow$ is a $(\oV,\osigma)[t,r]$-module: for $t \le i \le r$, define
\begin{align*}
{^\rho}\!\pi_i=(\rho_i \otimes \Id_{T(\oV')_{n}^\rightarrow}) & (\Id_M \otimes \ossigma_{T(\oV')_n^\rightarrow,V_i}) \, : \,
M\otimes T(\oV')_n^\rightarrow\otimes V_i\rightarrow M\otimes T(\oV')_n^\rightarrow.
\end{align*}
\item The braided differentials ${^{\rho}}\! d$ on $(\oV',\osigma)$ with coefficients in~$M$ are braided module morphisms for the structure above.
\end{enumerate}
\end{proposition}

\begin{proof}
Let us prove the compatibility relation~\eqref{eqn:MultiBrMod} for ${^\rho}\!\pi_i$ and ${^\rho}\!\pi_j$ with $t \le i \le j \le r$. Working on $M\otimes T(\oV')_n^\rightarrow \otimes V_i \otimes V_j$, and using notation~\eqref{eqn:phi_i}, one gets
\begin{align*}
{^\rho}\!\pi_i  ({^\rho}\!\pi_j &\otimes \Id_i)  (\Id_{M\otimes T(\oV')_n^\rightarrow} \otimes \sigma_{i,j}) 
\,=\, \rho_i^1  \ossigma_{T(\oV')_n^\rightarrow,V_i}^2  \rho_j^1  \ossigma_{T(\oV')_n^\rightarrow,V_j}^2  \sigma_{i,j}^{n+2} \\
& \,=\, \rho_i^1  \rho_j^1  \ossigma_{T(\oV')_n^\rightarrow, V_j \otimes V_i}^2  \sigma_{i,j}^{n+2} \stackrel{(A)}{\,=\,}  \rho_i^1  \rho_j^1  \sigma_{i,j}^{2}  \ossigma_{T(\oV')_n^\rightarrow, V_i \otimes V_j}^2  \\
&\stackrel{(B)}{\,=\,} \rho_j^1  \rho_i^1   \ossigma_{T(\oV')_n^\rightarrow, V_i \otimes V_j}^2  \,=\, \rho_j^1  \ossigma_{T(\oV')_n^\rightarrow,V_j}^2  \rho_i^1  \ossigma_{T(\oV')_n^\rightarrow,V_i}^2 \,=\, {^\rho}\!\pi_j  ({^\rho}\!\pi_i \otimes \Id_j),
\end{align*}
where $(A)$ is a repeated application of~\eqref{eqn:YB}, and $(B)$ follows from relation~\eqref{eqn:MultiBrMod} for~$\rho_i$ and~$\rho_j$. The compatibility between ${^\rho}\!\pi_i$ and ${^{\rho}}\! d$, $t \le i \le r$, is verified similarly.
\end{proof} 

Applied to a braided object $(V,\sigma)$ and a braided character on it, Proposition~\ref{thm:adjoint_multi_coeffs} endows all the tensor powers $V^n$ with a braided $(V,\sigma)$-module structure. Inspired by this example, we call \emph{adjoint} the braided modules from the proposition.

\section{A proto-example: braided systems of algebras}\label{sec:SystemsOfUAAs}

The braided systems studied in this section have unital associative algebras (\emph{UAAs}) as components~$V_i$. 
We exhibit a bijection between such systems and \emph{braided tensor products of algebras}, identifying braided modules over the former with usual modules over the latter. Proposition~\ref{thm:BrSystInv} then yields rules for permuting the factors of braided tensor products of algebras. Examples will follow. In this section $\C$ is {monoidal}, not necessarily additive. 

The braidings we use in the associative setting come with additional structure:

\begin{definition}\label{def:Norm1}
\begin{itemize}
\item 
Denote by $\BrSystP_r(\C)$ the category of
\begin{itemize}
\item braided systems $(\oV,\osigma) \in \BrSyst_r(\C)$ enriched with distinguished morphisms $\onu= (\nu_i \colon \II \to V_i)_{1 \le i \le r}$, called \emph{units}, and
\item morphisms from $\BrSyst_r(\C)$ preserving the units.
\end{itemize}
Objects $(\oV,\osigma,\onu)$ of  $\BrSystP_r(\C)$ are called rank~$r$ \emph{pointed braided systems}.
\item A \emph{right module} over $(\oV,\osigma,\onu) \in \BrSystP_r(\C)$ is a right $(\oV,\osigma)$-module $(M,\overline{\rho})$ satisfying $\rho_i  (\Id_M \otimes \nu_i) = \Id_M$ for $1 \le i \le r$ (i.e., units act trivially). The category of such modules and their morphisms is denoted by $\ModCatN_{(\oV,\osigma,\onu)}$.  Similar definitions and notations are assumed for left modules.
\end{itemize}
\end{definition}

We now show that different aspects of the UAA structure for $(V,\mu,\nu)$ are captured by the \emph{associativity braiding}
\[\sigma_{Ass}:=\nu \otimes \mu \quad : \quad V \otimes V = \II \otimes V \otimes V \rightarrow V \otimes V.\]
When working with several UAAs, notation~$\sigma_{Ass}(V)$ or $\sigma_{Ass}(V,\mu,\nu)$ helps avoid confusion. The category of UAAs and algebra morphisms in~$\C$ is denoted by $\Alg(\C)$.

\begin{theorem}[\cite{Lebed1}]\label{thm:UAA}
\begin{enumerate}
\item\label{item:functor} 
One has a fully faithful functor
\begin{align}
\Alg(\C) &\longhookrightarrow \BrSystP_1(\C)\label{eqn:CatInclUAA}\\
(V,\mu,\nu) &\longmapsto (V,\sigma_{Ass},\nu),\notag\\
f &\longmapsto f.\notag
\end{align}

\item\label{item:non-inv} The associativity braiding~$\sigma_{Ass}$ is idempotent: $\sigma_{Ass}  \sigma_{Ass} = \sigma_{Ass}$.

\item\label{item:EqnEquiv} The YBE for~$\sigma_{Ass}$ is equivalent to the associativity for~$\mu$, under the assumption that~$\nu$ is a unit for~$\mu$ (i.e., $\mu  (\Id_V \otimes \nu) = \mu  (\nu \otimes \Id_V) = \Id_V$).

\item\label{item:ModEquiv} For a UAA $(V, \mu, \nu)$ in~$\C$, one has an equivalence of right module categories
\begin{align*}
\ModCat_{(V, \mu, \nu)} &\stackrel{\sim}{\longleftrightarrow} \ModCatN_{(V, \sigma_{Ass}, \nu)}\\
(M,\rho) &\longleftrightarrow (M,\rho),
\end{align*}
where on the left one considers usual modules over UAAs, and on the right the pointed version of braided modules.

\item\label{item:bar} Let~$\C$ be preadditive. For a module $(M,\rho) \in \ModCat_{(V, \mu, \nu)} \simeq \ModCatN_{(V, \sigma_{Ass}, \nu)}$, the left braided differential $^\rho\!d$ on $(M\otimes V^n)_{n \ge 0}$ coincides with the classical \emph{bar differential}  $d_n = \rho^1 + \sum_{i=1}^{n-1}(-1)^i\mu^i$. 
\end{enumerate}
\end{theorem}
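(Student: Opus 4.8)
The plan is to verify the five assertions by direct manipulation of the explicit braiding $\sigma_{Ass}=\nu\otimes\mu$ together with the unit and associativity axioms, treating idempotency and the \eqref{eqn:YB}-equivalence first, deducing the functor and the module equivalence from them, and saving the bar-differential identity for last. Throughout I would exploit the graphical calculus: $\sigma_{Ass}$ is the diagram creating a unit $\nu$ on the left output strand and multiplying its two inputs into the right output strand, so that every computation reduces to sliding units and collapsing them through the unit axioms. For part~\ref{item:non-inv}, idempotency is immediate: composing $\nu\otimes\mu$ with itself feeds the freshly created unit into the first input of the upper $\mu$, and left unitality $\mu\circ(\nu\otimes\Id_V)=\Id_V$ collapses it, returning $\nu\otimes\mu$. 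For part~\ref{item:EqnEquiv} I would expand both sides of \eqref{eqn:YB} on $V\otimes V\otimes V$: a short diagram chase shows that $\sigma^1\sigma^2\sigma^1$ leaves a unit on the first two strands and $(ab)c$ on the third, whereas $\sigma^2\sigma^1\sigma^2$ leaves a unit on the first two strands and $a(bc)$ on the third, the only simplifications used being $1\cdot 1=1$ and $a\cdot 1=a$, i.e.\ the unit axioms. The two sides therefore coincide for all inputs precisely when $\mu$ is associative.

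Part~\ref{item:functor} then follows formally: well-definedness of the functor is part~\ref{item:EqnEquiv} together with the observation that $\nu$ is by construction the distinguished unit. For full faithfulness, the functor is the identity on underlying morphisms, hence faithful; for fullness, note that in $\BrSystP_1(\C)$ unit preservation $f\circ\nu=\nu$ is already imposed, so the two sides of the braided-morphism relation \eqref{eqn:BrMor} for $\sigma_{Ass}$ agree automatically on their first (unit) tensor factor, and the relation reduces to its second factor $f\circ\mu=\mu\circ(f\otimes f)$, i.e.\ to multiplicativity of $f$; thus the morphisms in the image are exactly the algebra morphisms. Part~\ref{item:ModEquiv} is handled the same way: rewriting the braided-module axiom \eqref{eqn:MultiBrMod} for the single component $\sigma_{1,1}=\sigma_{Ass}$, its right-hand side $\rho\circ(\rho\otimes\Id_V)\circ(\Id_M\otimes\sigma_{Ass})$ first creates a unit in the middle slot and the product in the last, after which the pointed condition $\rho\circ(\Id_M\otimes\nu)=\Id_M$ absorbs that unit and leaves $\rho\circ(\Id_M\otimes\mu)$; since the left-hand side is $\rho\circ(\rho\otimes\Id_V)$, the braided axiom is exactly the associativity of a unital module action. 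As the functor is the identity on objects and morphisms, this yields the asserted equivalence of module categories.

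The real work is part~\ref{item:bar}, the identification of the braided differential $({^{\rho}}\! d)_n$ for $\sigma_{Ass}$ (with coefficients in $M$) with the bar differential. Following Fig.~\ref{pic:BrDiffCoeffs} I would write $({^{\rho}}\! d)_n=\sum_{i=1}^{n}(-1)^{i-1}({^{\rho}}\! d)_{n;i}$, where $({^{\rho}}\! d)_{n;i}$ slides the $i$-th tensor factor to the front through $i-1$ copies of $\sigma_{Ass}$ and then applies $\rho$, the global sign $(-1)^{i-1}$ coming from the $i-1$ crossings evaluated with $-\osigma$. The key lemma is that this slide collapses cleanly for the associativity braiding: the first crossing sends $(v_{i-1},v_i)$ to $(\nu,\,v_{i-1}v_i)$, and every subsequent crossing $\sigma_{Ass}(v_j,\nu)=(\nu,\,v_j)$ merely transports that unit one step leftward by right unitality, leaving the single product $v_{i-1}v_i$ in place. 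Hence the slide deposits a unit in the front slot and leaves the lone product $v_{i-1}v_i$ among the remaining factors; after $\rho$ absorbs the front unit via the pointed condition, the net effect is $({^{\rho}}\! d)_{n;i}=\mu^{i-1}$ for $i\ge 2$ and $({^{\rho}}\! d)_{n;1}=\rho^1$. Summing gives $\rho^1+\sum_{j=1}^{n-1}(-1)^j\mu^j$, as claimed. The main obstacle is exactly this collapsing computation: one must check that the non-invertible $\sigma_{Ass}$ produces precisely one unit and one product per summand and that the iterated unit axioms telescope, rather than getting lost in the combinatorics of the shuffle set $Sh_{1,n-1}$.
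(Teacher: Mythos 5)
Your proposal is correct, and it takes exactly the route intended for Theorem~\ref{thm:UAA} (which this paper cites from~\cite{Lebed1} without reproducing the proof): direct unit-sliding computations with $\sigma_{Ass}$ give idempotency, the reduction of both sides of the YBE to $\nu\otimes\nu\otimes(\mu\circ(\mu\otimes\Id_V))$ versus $\nu\otimes\nu\otimes(\mu\circ(\Id_V\otimes\mu))$, the reduction of \eqref{eqn:BrMor} to multiplicativity of~$f$, the reduction of \eqref{eqn:MultiBrMod} to module associativity, and the collapse of each coshuffle term $({^{\rho}}\!d)_{n;i}$ to $\mu^{i-1}$ (resp.\ $\rho^1$ for $i=1$) with sign $(-1)^{i-1}$ from the negative braiding. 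The only step you leave implicit --- concluding $a=b$ from $\nu\otimes a=\nu\otimes b$, needed both for fullness and for the direction YBE $\Rightarrow$ associativity --- is justified by post-composing with $\mu$ and invoking left unitality, i.e.\ the same collapsing move you use everywhere else, so there is no gap.
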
 

\begin{remark}
\begin{itemize}
\item A more elegant functor $\Alg(\C) \rightarrow \BrSyst_1(\C)$ is obtained by composing~\eqref{eqn:CatInclUAA} with a forgetful functor; however, it is not full.
\item Point~\ref{item:non-inv} shows that the braiding~$\sigma_{Ass}$ is {highly non-invertible} in general. 
\item The equivalence in~\ref{item:EqnEquiv} holds under a mild unitality assumption; such {normalization conditions} are ubiquitous in our braided approach.
\item Point~\ref{item:ModEquiv} for $M=\II$ ensures that an algebra character is a braided character.
\item Dualizing, one recovers the category of \emph{coalgebras} in~$\C$ inside the category of \emph{co-pointed} (= endowed with a distinguished co-element) braided objects:
\begin{align*}
{\coAlg(\C)}&{\longhookrightarrow \BrSystCP_1(\C)},\\
(V,\Delta,\varepsilon) &\longmapsto (V,\sigma_{coAss}=\varepsilon \otimes\Delta,\varepsilon),\\
f &\longmapsto f.
\end{align*}
The algebra-coalgebra duality in a preadditive~$\C$ can now be seen inside the category of bipointed braided objects $\BrSystBP_1(\C)$. Indeed, this category is self-dual, the notion of braiding being so; and it encompasses both $\Alg(\C)$ and $\coAlg(\C)$ (take zero maps as the missing (co)units):
\[\coAlg(\C)\longhookrightarrow \BrSystBP_1(\C)\longhookleftarrow\Alg(\C).\]
\item In the theorem, the associativity braiding can be replaced with its \emph{right version}
$\sigma_{Ass}^r:=\mu \otimes \nu$. In this case left modules should be taken as coefficients in the last point. The diagrams of the two associativity braidings are shown in Fig.~\ref{pic:BrRightUAA}. 
\end{itemize}
\end{remark}
\begin{figure}\centering
\begin{tikzpicture}[scale=0.7]
 \node at (-2,0.5) {$\sigma_{Ass} \quad \longleftrightarrow \qquad$}; 
 \draw (0,0) -- (1,1);
 \draw (1,0) -- (0.5,0.5);
 \draw (0.3,0.7) -- (0,1);
 \node at (0.5,0.5) [right] {$\mu$};
 \node at (0.3,0.7) [left] {$\nu$}; 
 \fill (0.3,0.7) circle (0.1);
 \fill (0.5,0.5) circle (0.1);
 \node at (4,0.5) {};
\end{tikzpicture}
\begin{tikzpicture}[scale=0.7]
 \node at (-2,0.5) {$\sigma_{Ass}^{r} \quad \longleftrightarrow \qquad$}; 
 \draw  (0,0) -- (0.5,0.5);
 \draw  (1,0) -- (0,1);
 \draw  (0.7,0.7) -- (1,1);
 \node at (0.5,0.5) [left]{$\mu$};
 \node at (0.7,0.7) [right] {$\nu$}; 
 \fill (0.7,0.7) circle (0.1);
 \fill (0.5,0.5) circle (0.1);
\end{tikzpicture}
\caption{Associativity braidings: $\sigma_{Ass}$ and its vertical mirror version $\sigma_{Ass}^{r}$}\label{pic:BrRightUAA}
\end{figure}

From now on, we work with several interacting UAAs~$V_i$. After some technical definitions, we study compatibilities between the braidings $\sigma_{Ass}(V_i)$, and interpret them in terms of (a multi-version of) braided tensor products of algebras.

\begin{definition}\label{def:NormNat}
\begin{itemize}
\item  Take a $V \in \Ob(\C)$. A pair of morphisms $(\eta\colon \II \to V,\epsilon \colon V \to \II)$ is called \emph{normalized} if $\epsilon  \eta = \Id_\II$.

\item  Take $V,W \in \Ob(\C)$. A morphism $\xi \colon V \otimes W \to W \otimes V$ is \emph{natural with respect to} a morphism $\varphi \colon V^{n} \to V^{m}$ (or $\psi\colon W^{n} \to W^{m}$) if
\[\xi^1  \cdots  \xi^m  (\varphi \otimes \Id_W) = (\Id_W \otimes \varphi) \xi^1  \cdots  \xi^n\]
(recall Notation~\eqref{eqn:phi_i}), or, respectively,
\[\xi^m  \cdots  \xi^1  (\Id_V \otimes \psi)= (\psi \otimes \Id_V) \xi^n  \cdots  \xi^1.\]
In the case $V = W$ both conditions are required.
\end{itemize}
\end{definition}

The naturality conditions for $n=1$, $m=2$ and $V = W$ are diagrammatically presented in Fig.~\ref{pic:Nat}. In this example, one recovers two of the six Reidemeister moves from the theory of knotted trivalent graphs \cite{KauffmanGraphs,YamadaGraphs,YetterGraphs}.
\begin{figure} \centering
\begin{tikzpicture}[xscale=0.4, yscale = 0.35]
 \draw[rounded corners](0.65,0.65) -- (2,2)-- (2,2.5);
 \draw (0,0) -- (0.35,0.35);
 \draw (1,1) -- (1,2.5);
 \draw[rounded corners](1,0) -- (0,1)-- (0,2.5);
 \fill (1,1) circle (0.2);
 \node at (1,1) [right] {$\varphi$};
 \node at (3,1) {$=$};
\end{tikzpicture}
\begin{tikzpicture}[xscale=0.4, yscale = 0.35]
 \draw[rounded corners](0,-0.5) -- (0,0)--(0.8,0.8);
 \draw (1.15,1.15) --   (2,2);
 \draw[rounded corners](0,-0.5) -- (0,1)--(0.35,1.35);
 \draw (0.65,1.65) --  (1,2);
 \draw[rounded corners] (1,-0.5) -- (1,1) -- (0,2);
 \fill (0.05,0) circle (0.2);
 \node at (0.05,0) [left] {$\varphi$};
\end{tikzpicture}
\begin{tikzpicture}[xscale=0.4, yscale = 0.35]
 \node at (-3,1) {};
 \draw[rounded corners](1.65,0.65) -- (2,1)-- (2,2.5);
 \draw (1,0) -- (1.35,0.35);
 \draw (1,1) -- (1,2.5);
 \draw[rounded corners](2,0) -- (0,2)-- (0,2.5);
 \fill (1,1) circle (0.2);
 \node at (1,1) [left] {$\varphi$};
 \node at (3,1) {$=$};
\end{tikzpicture}
\begin{tikzpicture}[xscale=0.4, yscale = 0.35]
 \draw[rounded corners](0,-0.5) -- (0,0)-- (0.85,0.85);
 \draw (1.15,1.15) -- (1.35,1.35);
 \draw (1.65,1.65) -- (2,2);
 \draw[rounded corners](2,-0.5) -- (2,1)-- (1,2);
 \draw[rounded corners](2,-0.5) -- (2,0)-- (0,2);
 \fill (2,0) circle (0.2);
 \node at (2,0) [right] {$\varphi$};
\end{tikzpicture}
\caption{Naturality}\label{pic:Nat}
\end{figure}

\begin{theorem}\label{thm:ProdOfAlgebrasBrFamily}
In a monoidal category~$\C$, take UAAs $(V_i,\mu_i,\nu_i)_{1 \le i \le r}$, and morphisms $\xi_{i,j}$, $1 \le {i<j} \le r$, natural with respect to~$\nu_i$ and~$\nu_j$. Let each unit~$\nu_i$ be a part of a normalized pair $(\nu_i,\epsilon_i)$. Then the following statements are equivalent:
\begin{enumerate}[label=(\Alph*)]
\item\label{item:braided} The morphisms $\xi_{i,i} := \sigma_{Ass}(V_i)$, $1 \le i \le r$, complete the~$\xi_{i,j}$ and the~$\nu_i$ into a pointed braided system structure on~$\oV$. 
\item\label{item:mixed} Each $\xi_{i,j}$ is natural with respect to~$\mu_i$ and~$\mu_j$, and, for each triple $i<j<k$, the~$\xi$ satisfy the YBE on $V_i \otimes V_j \otimes V_k$.
\item\label{item:alg} A UAA structure on $\leftV:=V_r \otimes V_{r-1} \otimes \cdots \otimes V_1$ can be defined by putting
\begin{align}
\mu_{\leftV} &=(\mu_r \otimes \cdots \otimes \mu_1)  \xi_{1,2}^{2r-2}  (\xi_{2,3}^{2r-4}  \xi_{1,3}^{2r-3})  \cdots  (\xi_{r-1,r}^2  \cdots  \xi_{2,r}^{r-1}  \xi_{1,r}^r),\label{eqn:LeftV}\\
\nu_{\leftV} &= \nu_r \otimes \nu_{r-1} \otimes \cdots \otimes \nu_1. \label{eqn:LeftV'}
\end{align} 
\end{enumerate}
\end{theorem}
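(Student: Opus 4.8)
The plan is to split the statement into the two equivalences $\fbox{1}\Leftrightarrow\fbox{2}$ and $\fbox{2}\Leftrightarrow\fbox{3}$, the first being the genuinely new ``fully braided'' part and the second essentially the (iterated) twisted-tensor-product computation. For $\fbox{1}\Leftrightarrow\fbox{2}$ I would sort the instances of the colored YBE~\eqref{eqn:YB} according to the coincidence pattern of the indices on a product $V_i\otimes V_j\otimes V_k$ with $i\le j\le k$, leaving exactly four cases: $i=j=k$, $i=j<k$, $i<j=k$, and $i<j<k$. The last case involves only the off-diagonal components $\xi_{i,j},\xi_{i,k},\xi_{j,k}$, so there the YBE is literally the condition appearing in~\fbox{2}; in the first case the only component in play is $\sigma_{i,i}=\sigma_{Ass}(V_i)$, so part~\ref{item:EqnEquiv} of Theorem~\ref{thm:UAA} converts the YBE into the associativity of $\mu_i$, which holds since each $V_i$ is a UAA (the required unit condition being part of the hypotheses).

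The crux is the two mixed cases, which are precisely the ``new'' equivalences advertised in the introduction. Take colors $(i,i,k)$ with $i<k$: here the YBE reads
\[(\xi_{i,k}\otimes \Id_i)\circ(\Id_i \otimes \xi_{i,k})\circ(\sigma_{Ass}(V_i)\otimes \Id_k) = (\Id_k \otimes \sigma_{Ass}(V_i))\circ(\xi_{i,k}\otimes \Id_i)\circ(\Id_i \otimes \xi_{i,k}),\]
and I would expand $\sigma_{Ass}(V_i)=\nu_i\otimes\mu_i$ diagrammatically (Fig.~\ref{pic:BrRightUAA}). The $\mu_i$-leg produces the two factors that get braided across $V_k$, while the freshly created $\nu_i$-leg can be slid through $\xi_{i,k}$ by the assumed naturality with respect to $\nu_i$ (Fig.~\ref{pic:Nat}); this turns the YBE into the naturality of $\xi_{i,k}$ with respect to $\mu_i$ (Definition~\ref{def:NormNat}, $n=2$, $m=1$). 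For the converse extraction I would precompose with $\nu_i$ on the merged strand and use the normalization $\epsilon_i\circ\nu_i=\Id_\II$ to delete the unit, recovering exactly the naturality identity; the colors $(i,j,j)$ case is the mirror image, yielding naturality with respect to $\mu_j$. The main difficulty here is bookkeeping --- keeping the $\nu_i$/$\mu_i$ legs, the normalization, and the already-assumed unit-naturality straight --- which is why the argument is cleanest as a sequence of Reidemeister-type moves on colored diagrams rather than as formulas.

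For $\fbox{2}\Leftrightarrow\fbox{3}$ I would first dispatch the unit axioms for $(\mu_{\leftV},\nu_{\leftV})$: since each $\xi_{i,j}$ is natural with respect to $\nu_i$ and $\nu_j$, a unit leg inserted on either side of $\mu_{\leftV}$ slides through all the $\xi^p_{i,j}$ in~\eqref{eqn:LeftV} and is absorbed by the unit axiom of the relevant $\mu_i$, so the unit axioms hold under the standing hypotheses and carry no extra content; thus \fbox{3} amounts to the associativity of $\mu_{\leftV}$. I would treat this by induction on~$r$. The base case $r=2$ is the classical twisted tensor product $V_2\otimes V_1$ with $\mu_{\leftV}=(\mu_2\otimes\mu_1)\circ\xi_{1,2}^2$, whose associativity is well known to be equivalent to the naturality of $\xi_{1,2}$ with respect to $\mu_1$ and $\mu_2$ (the YBE clause of~\fbox{2} being vacuous, as there is no triple $i<j<k$). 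For the inductive step I would factor out the rightmost ``column'' $\xi_{r-1,r}^2\circ\cdots\circ\xi_{1,r}^r$, which transports the last block $V_r$ across the multiplication of the rank-$(r-1)$ product $V_{r-1}\otimes\cdots\otimes V_1$; associativity then unwinds, via the naturality of the $\xi_{i,r}$ with respect to the $\mu_i$ and the YBEs on the triples $(i,j,r)$, into the associativity of the smaller product plus the remaining instances of~\fbox{2}.

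Conversely, to recover the individual conditions of~\fbox{2} from associativity of $\mu_{\leftV}$, I would feed units $\nu_\ell$ into all components other than the two or three of interest and apply the counits $\epsilon_\ell$ to project back; the normalization $\epsilon_\ell\circ\nu_\ell=\Id_\II$ guarantees this projection isolates exactly one YBE or naturality identity at a time. I expect the principal obstacle throughout to be notational rather than conceptual: controlling the precise positions~$p$ at which each $\xi^p_{i,j}$ acts in the long composite~\eqref{eqn:LeftV}, and verifying that the moves supplied by~\fbox{2} suffice to bring $\mu_{\leftV}\circ(\mu_{\leftV}\otimes\Id)$ and $\mu_{\leftV}\circ(\Id\otimes\mu_{\leftV})$ to a common form. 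The graphical calculus, together with the reading of~\eqref{eqn:LeftV} as a sorting of the interleaved product $\leftV\otimes\leftV$ into blocks of equal color, is what keeps this bookkeeping manageable.
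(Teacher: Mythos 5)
Your proposal is correct and follows essentially the same route as the paper: both proofs use Point~\fbox{2} as the intermediate hub, both sort the YBE instances by index pattern and convert the mixed cases $(i,i,k)$ and $(i,j,j)$ into naturality of $\xi_{i,k}$ with respect to $\mu_i$ (resp.\ $\mu_j$) by expanding $\sigma_{Ass}$ and sliding the created unit through crossings via unit-naturality, and both recover Point~\fbox{2} from Point~\fbox{3} by feeding in units $\nu_\ell$ and projecting with the counits $\epsilon_\ell$. The only deviations are cosmetic: in the mixed-YBE cases the paper deletes the inserted unit by composing with $\Id\otimes\mu_i$ (unit axiom) where you invoke $\epsilon_i\circ\nu_i=\Id_\II$ (both work, though the paper's Remark~\ref{rmk:EpsilonsNotNecessary} notes the $\epsilon_i$ are dispensable for $\fbox{1}\Leftrightarrow\fbox{2}$), and for $\fbox{2}\Rightarrow\fbox{3}$ you organize the routine verification as an induction on $r$ reducing to the classical $r=2$ twisted tensor product, where the paper performs a direct graphical check.
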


The multiplication~\eqref{eqn:LeftV} for $r=3$ is diagrammatically presented in Fig.~\ref{pic:YBiij}{A}. Note the inverse component order in the definition of~$\leftV$, ensuring that~$\mu_{\leftV}$ is well-defined.

\begin{proof}
We show that assertions~\ref{item:braided} and~\ref{item:alg} are both equivalent to~\ref{item:mixed}.

Start with~\ref{item:braided}. The YBE on each $V_i \otimes V_i \otimes V_i$ is guaranteed by Theorem~\ref{thm:UAA}. On $V_i \otimes V_i \otimes V_j$ with $i<j$, the YBE becomes
\begin{align*}
(\xi_{i,j} \otimes \Id_{i})  (\Id_{i} \otimes \xi_{i,j}) (\nu_i \otimes \mu_i \otimes \Id_{j}) &= ( \Id_{j} \otimes \nu_i \otimes \mu_i)  (\xi_{i,j} \otimes \Id_{i})  (\Id_{i} \otimes \xi_{i,j})
\end{align*}
(see Fig.~\ref{pic:YBiij}{B} for a graphical version). But this is equivalent to $\xi_{i,j}$ being natural w.r.t. $\mu_i$ (Fig.~\ref{pic:YBiij}{C}): compose the former with $\Id_{j} \otimes \mu_i$ to get the latter, and compose the latter with $(\xi_{i,j}\otimes \Id_i)  (\nu_i \otimes \Id_j \otimes \Id_i)$ to get the former (in each case, use the naturality of $\xi_{i,j}$ w.r.t. the units to pull the truncated strands out of all crossings). Similarly, the YBE on $V_i \otimes V_j \otimes V_j$, $i<j$, is equivalent to $\xi_{i,j}$ being natural w.r.t. $\mu_j$. This yields the equivalence \ref{item:braided} $\Leftrightarrow$ \ref{item:mixed}.
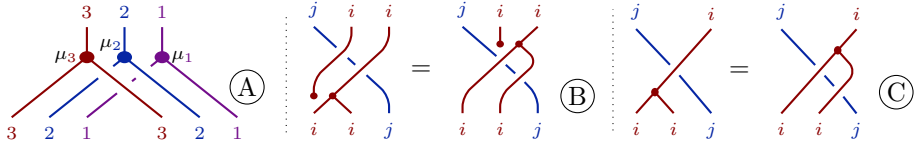
\begin{figure}\centering
\begin{tikzpicture}[xscale=0.5,yscale=0.4]
\draw [thick,colork] (7,0)--(5,2)--(5,3);
\draw [thick,colorj] (6,0)--(4,2)--(4,3);
\draw [thick,colori] (5,0)--(3,2)--(3,3);
\draw [thick,colori] (1,0)--(3,2);
\draw [thick,colorj] (2,0)--(3.3,1.3);
\draw [thick,colorj] (3.7,1.7)--(4,2);
\draw [thick,colork] (3,0)--(3.8,0.8);
\draw [thick,colork] (4.2,1.2)--(4.3,1.3);
\draw [thick,colork] (4.7,1.7)--(5,2);
\fill [colori] (3,2) circle (0.2);
\fill [colorj] (4,2) circle (0.2);
\fill [colork] (5,2) circle (0.2);
\node at (1,0) [colori,below] {$\scriptstyle 3$};
\node at (2,0) [colorj,below] {$\scriptstyle 2$};
\node at (3,0) [colork,below] {$\scriptstyle 1$};
\node at (5,0) [colori,below] {$\scriptstyle 3$};
\node at (6,0) [colorj,below] {$\scriptstyle 2$};
\node at (7,0) [colork,below] {$\scriptstyle 1$};
\node at (3,3) [colori,above] {$\scriptstyle 3$};
\node at (4,3) [colorj,above] {$\scriptstyle 2$};
\node at (5,3) [colork,above] {$\scriptstyle 1$};
\node at (3,2) [left] {$\scriptstyle  \mu_{\color{colori}3}$};
\node at (4.2,2.4) [left] {$\scriptstyle  \mu_{\color{colorj}2}$};
\node at (5,2) [right] {$\scriptstyle  \mu_{\color{colork}1}$};
\node  at (7.2,1)  {\circled{A}};
\draw [dotted] (8.3,-0.5)--(8.3,3.5);
\end{tikzpicture} 
\begin{tikzpicture}[xscale=.25,yscale=0.23]
\node  at (-0.5,1)  {};
 \draw[thick,colori,rounded corners] (0,0) -- (4,4) -- (4,5);
 \draw[thick,colori,rounded corners] (0,1) -- (0,2) -- (2,4) -- (2,5);
 \draw[thick,colori] (2,0) -- (1,1);
 \draw[thick,colorj,rounded corners] (4,0) -- (4,1) -- (2.8,2.2); 
 \draw[thick,colorj,rounded corners] (2.2,2.8) -- (1.8,3.2);
 \draw[thick,colorj,rounded corners] (1.2,3.8) -- (0,5);
 \node at (4,0) [colorj,below] {$\scriptstyle j$};
 \node at (2,0) [colori,below] {$\scriptstyle i$};
 \node at (0,0) [colori,below] {$\scriptstyle i$};
 \node at (0,5) [colorj,above] {$\scriptstyle j$}; 
 \node at (2,5) [colori,above] {$\scriptstyle i$};
 \node at (4,5) [colori,above] {$\scriptstyle i$};
 \fill [colori] (0,1) circle (0.2);
 \fill [colori] (1,1) circle (0.2);
\node  at (5.7,2.5){$=$};
\end{tikzpicture}
\begin{tikzpicture}[xscale=.25,yscale=0.23]
 \draw[thick,colori,rounded corners] (0,0) -- (0,1) -- (4,5);
 \draw[thick,colori,rounded corners] (2,0) -- (2,1) -- (4,3) -- (3,4);
 \draw[thick,colori] (2,4) -- (2,5);
 \draw[thick,colorj,rounded corners] (4,0) -- (4,1) -- (3.3,1.8);
 \draw[thick,colorj,rounded corners] (2.8,2.3) -- (2.3,2.8);
 \draw[thick,colorj,rounded corners] (1.8,3.3) -- (0,5);
 \node at (4,0) [colorj,below] {$\scriptstyle j$};
 \node at (2,0) [colori,below] {$\scriptstyle i$};
 \node at (0,0) [colori,below] {$\scriptstyle i$};
 \node at (0,5) [colorj,above] {$\scriptstyle j$}; 
 \node at (2,5) [colori,above] {$\scriptstyle i$};
 \node at (4,5) [colori,above] {$\scriptstyle i$};
 \fill [colori] (2,4) circle (0.2);
 \fill [colori] (3,4) circle (0.2);
 \node  at (6,1)  {\circled{B}};
 \draw [dotted] (8,-1)--(8,6);
\end{tikzpicture}
\begin{tikzpicture}[xscale=.25,yscale=0.28]
 \draw[thick,colori,rounded corners] (0,0) -- (4,4);
 \draw[thick,colori] (2,0) -- (1,1);
 \draw[thick,colorj,rounded corners] (4,0) -- (2.3,1.8); 
 \draw[thick,colorj,rounded corners] (1.8,2.3) -- (0,4);
 \node at (4,0) [colorj,below] {$\scriptstyle j$};
 \node at (2,0) [colori,below] {$\scriptstyle i$};
 \node at (0,0) [colori,below] {$\scriptstyle i$};
 \node at (0,4) [colorj,above] {$\scriptstyle j$}; 
 \node at (4,4) [colori,above] {$\scriptstyle i$}; 
 \fill [colori] (1,1) circle (0.2);
 \node  at (5.5,2){$=$};
\end{tikzpicture}
\begin{tikzpicture}[xscale=.25,yscale=0.28]
 \draw[thick,colori,rounded corners] (0,1) -- (4,5);
 \draw[thick,colori,rounded corners] (2,1) -- (4,3) -- (3,4);
 \draw[thick,colorj,rounded corners] (4,1) -- (3.3,1.8);
 \draw[thick,colorj,rounded corners] (2.8,2.3) -- (2.3,2.8);
 \draw[thick,colorj,rounded corners] (1.8,3.3) -- (0,5);
 \node at (4,1) [colorj,below] {$\scriptstyle j$};
 \node at (2,1) [colori,below] {$\scriptstyle i$};
 \node at (0,1) [colori,below] {$\scriptstyle i$};
 \node at (0,5) [colorj,above] {$\scriptstyle j$}; 
 \node at (4,5) [colori,above] {$\scriptstyle i$}; 
 \fill [colori] (3,4) circle (0.2);
 \node  at (6,2)  {\circled{C}};
\end{tikzpicture}
\caption{Braided tensor product of UAAs; YBE on $V_i \otimes V_i \otimes V_j$; naturality w.r.t. $\mu_i$}\label{pic:YBiij}
\end{figure}

To conclude, we need the equivalence \ref{item:alg} $\Leftrightarrow$ \ref{item:mixed}. It compares local and global properties of a braided system of UAAs. The following maps relate these two scales:
\begin{equation}\label{eqn:iota}
\iota_j=\nu_r \otimes \cdots \otimes \nu_{j+1} \otimes \Id_{j} \otimes \nu_{j-1} \otimes \cdots \nu_1 \colon V_j \to \leftV.
\end{equation}

Given a collection~$\xi_{i,j}$ from~\ref{item:mixed}, one checks (e.g., graphically) that~$\mu_{\leftV}$ and~$\nu_{\leftV}$ from~\ref{item:alg} define a UAA structure. This generalizes the verifications necessary to define the tensor product of algebras in a braided category. To show that all the conditions from~\ref{item:mixed} are needed, consider the associativity relation for~$\mu_{\leftV}$ composed with
\begin{itemize}
\item either $\iota_i \otimes \iota_j \otimes \iota_k\colon V_i \otimes V_j \otimes V_k \to \leftV^{ 3}$ on the right and the~$\epsilon_t$ at all the positions except for $i,j,k$ on the left (this gives the YBE on $V_i \otimes V_j \otimes V_k $, $i < j < k$);
\item or $\iota_i \otimes \iota_i \otimes \iota_j\colon V_i \otimes V_i \otimes V_j \to \leftV^{ 3}$ on the right and the~$\epsilon_t$ at all the positions except for $i,j$ on the left (this gives the naturality of~$\xi_{i,j}$ w.r.t.~$\mu_i$);
\item or $\iota_i \otimes \iota_j \otimes \iota_j\colon V_i \otimes V_j \otimes V_j \to \leftV^{\otimes 3} $ on the right and the~$\epsilon_t$ at all the positions except for $i,j$ on the left (this gives the naturality of~$\xi_{i,j}$ w.r.t.~$\mu_j$).
\end{itemize}
For example, in the second case the naturality of the~$\xi$ w.r.t. the units yields 
\[(\text{ass-ty for } \mu_{\leftV})(\iota_i \otimes \iota_i \otimes \iota_j) = (\iota_j \otimes \nu_{\leftV} \otimes \iota_i)(\text{nat-ty condition from Fig.~\ref{pic:YBiij}{C}}).\] 
Applying the~$\epsilon_t$, one gets rid of the term $(\iota_j \otimes \mu_{\leftV} \otimes \iota_i)$.
\end{proof}

The theorem gives a braided~\ref{item:braided}, an associative~\ref{item:alg}, and a mixed~\ref{item:mixed} interpretation of the same phenomenon. For certain structures, associativity verification can be considerably simplified by checking~\ref{item:braided} or~\ref{item:mixed} instead. 

\begin{definition}
A braided system of the type described in the theorem is called a \emph{(pointed) braided system of UAAs}, and the UAA~$\leftV$ is called the \emph{braided tensor product} of the UAAs $V_1, \ldots, V_r$, denoted (abusively) by $\leftV = V_r \underset{\xi}{\otimes} V_{r-1} \underset{\xi}{\otimes} \cdots \underset{\xi}{\otimes} V_1$.
\end{definition}

\begin{remark}\label{rmk:EpsilonsNotNecessary}
The~$\epsilon_i$ were used only to prove \ref{item:alg} $\Rightarrow$ \ref{item:mixed}, i.e., to go from the global setting to the local. One could instead impose~\ref{item:alg} for all subsystems of~$\oV$, and work in appropriate subsystems instead of composing with the~$\epsilon_i$. In particular, for $r=2$ the theorem holds true even without the normalized pair condition.
\end{remark}

\begin{remark}\label{rmk:pre_mirror}
Some or all of the morphisms $\xi_{i,i} = \sigma_{Ass}(V_i)$ can be replaced with their right versions $\sigma_{Ass}^r(V_i)$. The theorem still holds true, with analogous proof. 
\end{remark}

\begin{example}\label{ex:TensorProdOfAlgInBraidedCat}
Take UAAs~$V_i$ in a braided category~$\C$, and put $\xi_{i,j}=c_{V_i,V_j}$. The categorical braiding~$c$ is  natural w.r.t. everything, in particular the units. Proposition~\ref{thm:TrivialBrSystem} then translates as condition~\ref{item:braided} from the theorem. The UAA structure on~$\leftV$ deduced from~\ref{item:alg} recovers the usual tensor product of algebras in a braided category. 
\end{example}

In an additive category, the braided tensor product~$\leftV$ is alternatively described as $\raisebox{.5mm}{$T(\oplus_i V_i)$}\big / \raisebox{-.5mm}{$\langle \sigma - \Id, \, \nu - \Id  \rangle$}$, where the ideal we mod out is generated by the images of $\sigma_{i,j} - \Id_i \otimes \Id_j$ and $\nu_i - \Id_i$. Observation~\ref{rmk:MultiMod2} then suggests a representation-theoretic counterpart for the structure equivalence from Theorem~\ref{thm:ProdOfAlgebrasBrFamily}. More generally, 

\begin{proposition}\label{thm:ProdOfAlgebrasBrFamilyMod}
In the settings of Theorem~\ref{thm:ProdOfAlgebrasBrFamily}, the category of modules over the pointed braided system from~\ref{item:braided} is equivalent to the category of modules over the algebra~$\leftV$ from~\ref{item:alg}:
$\ModCatN_{(\oV,\overline{\xi},\onu)} \simeq \ModCat_{(\leftV, \mu_{\leftV},\nu_{\leftV})}$.
\end{proposition}

\begin{proof}
Observation~\ref{rmk:MultiMod} combined with Point~\ref{item:ModEquiv} of Theorem~\ref{thm:UAA} interpret a module structure over $(\oV,\overline{\xi},\onu)$ as module structures $(M,\rho_j)$ over UAAs $(V_j,\mu_j,\nu_j)$, compatible in the sense of~\eqref{eqn:MultiBrMod}. The map $\rho = \rho_1  (\rho_{2} \otimes \Id_{1})  \cdots  (\rho_{r} \otimes \Id_{r-1} \otimes \cdots \otimes \Id_{1})$ then turns~$M$ into a $\leftV$-module. Conversely, a $\leftV$-module $(M,\rho)$ becomes a $(\oV,\overline{\xi},\onu)$-module via $\rho_j = \rho  (\Id_M \otimes \iota_j)$, where the $\iota_j$ are defined in~\eqref{eqn:iota}. The identity functor of~$\C$ and this structure correspondence give the desired category equivalence.
\end{proof}

We now discuss factor permutation in braided tensor products of UAAs.

\begin{proposition}\label{thm:ProdOfAlgebrasBrFamilyInv}
In the settings of Theorem~\ref{thm:ProdOfAlgebrasBrFamily}, suppose one of the~$\xi_{i,i+1}$ invertible. Then
\begin{enumerate}
\item The UAAs $V_1, \ldots, V_{i-1},V_{i+1}, V_i,V_{i+2} \ldots, V_r$ endowed with the~$\xi$ from the system~$\oV$, completed with $\xi_{i,i+1}^{-1}$ on  $V_{i+1} \otimes V_i$, still form a braided system of UAAs. 
\item The braided tensor products~$\leftV$ and $s_i \cdot \leftV:=V_r \underset{\xi}{\otimes} \cdots \underset{\xi}{\otimes} V_{i+2} \underset{\xi}{\otimes} V_{i} \underset{\xi^{-1}}{\otimes} V_{i+1} \underset{\xi}{\otimes} V_{i-1} \underset{\xi}{\otimes} \cdots \underset{\xi}{\otimes} V_1$ are related by the algebra isomorphism (abusively denoted by~$s_i$)
\[\Id_{r} \otimes \ldots \otimes \Id_{{i+2}} \otimes \xi_{i,i+1}^{-1} \otimes \Id_{{i-1}} \otimes \ldots \otimes \Id_{{1}} \quad : \quad \leftV \longrightarrow s_i \cdot \leftV.\]
\item
The algebra isomorphism above induces an equivalence of modules categories: 
\begin{align*}
\ModCat_{\leftV} &\stackrel{\sim}{\longleftrightarrow}  \ModCat_{s_i \cdot \leftV},\\
(M,\rho_{\leftV}) &\longleftrightarrow (M,\rho_{\leftV}  (\Id_M \otimes s_i^{-1})).
\end{align*}
\end{enumerate}
\end{proposition}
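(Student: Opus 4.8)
The plan is to treat the three points in turn: obtain Point~1 from the general permutation result already available, derive Point~2 from it by checking that $s_i$ respects the unit and the multiplication, and then read off Point~3 as a formal consequence of Point~2.

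For Point~1, I would simply invoke Proposition~\ref{thm:BrSystInv} with $p=i$: it already guarantees that the reindexed family $V_1,\ldots,V_{i-1},V_{i+1},V_i,V_{i+2},\ldots,V_r$, equipped with the old $\xi_{j,k}$ and with $\xi_{i,i+1}^{-1}$ on $V_{i+1}\otimes V_i$, is a braided system. It remains to see that this is a braided system of UAAs in the sense of Theorem~\ref{thm:ProdOfAlgebrasBrFamily}, i.e. that its diagonal components are associativity braidings (true, since these are untouched by the permutation) and that the one new off-diagonal component $\xi_{i,i+1}^{-1}$ is natural with respect to the units $\nu_{i+1}$ and $\nu_i$. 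The latter is immediate: applying $\xi_{i,i+1}^{-1}$ on the left to the two unit-naturality identities of $\xi_{i,i+1}$ (Definition~\ref{def:NormNat}) and using invertibility yields exactly the two unit-naturality identities for $\xi_{i,i+1}^{-1}$. Theorem~\ref{thm:ProdOfAlgebrasBrFamily} then endows $s_i\cdot\leftV$ with the UAA structure \eqref{eqn:LeftV}--\eqref{eqn:LeftV'}.

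For Point~2, note first that $s_i$ is invertible, its inverse being the analogous morphism built from $\xi_{i,i+1}$ in place of $\xi_{i,i+1}^{-1}$; so everything reduces to showing that $s_i$ is a UAA morphism. Unit compatibility $s_i\circ\nu_{\leftV}=\nu_{s_i\cdot\leftV}$ collapses, after discarding the untouched tensor factors, to $\xi_{i,i+1}^{-1}\circ(\nu_{i+1}\otimes\nu_i)=\nu_i\otimes\nu_{i+1}$, which is once more the unit-naturality of $\xi_{i,i+1}^{-1}$. The real work, and the step I expect to be the main obstacle, is multiplicativity $s_i\circ\mu_{\leftV}=\mu_{s_i\cdot\leftV}\circ(s_i\otimes s_i)$. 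I would verify this graphically. Both sides are diagrams assembled from the crossings $\xi_{j,k}$ and the multiplications $\mu_j$ according to \eqref{eqn:LeftV}, and they differ only in the position of a single inverse crossing $\xi_{i,i+1}^{-1}$: on the left it sits above the whole product diagram of $\mu_{\leftV}$, while on the right one copy sits on each input, below the reordered product $\mu_{s_i\cdot\leftV}$. The identity is then obtained by sliding this inverse crossing through the diagram, the only moves needed being the (colored) third Reidemeister move --- i.e. the YBEs on the triples involving the indices $i$ and $i+1$ --- and the naturality moves of Fig.~\ref{pic:Nat} with respect to $\mu_i$ and $\mu_{i+1}$. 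These are exactly the relations made available by condition~\ref{item:mixed} of Theorem~\ref{thm:ProdOfAlgebrasBrFamily}, so no new input is required; the only delicate point is to keep track of the order in which the $\xi_{j,k}$ occur in \eqref{eqn:LeftV}, so that the crossing can legitimately be commuted past its neighbours.

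Point~3 is then a formal consequence of Point~2. An isomorphism of UAAs $s_i\colon\leftV\to s_i\cdot\leftV$ induces an equivalence of module categories by pulling actions back along $s_i^{-1}$: a right $\leftV$-module $(M,\rho_{\leftV})$ is sent to the right $s_i\cdot\leftV$-module $(M,\rho_{\leftV}\circ(\Id_M\otimes s_i^{-1}))$, and this functor is an equivalence with evident quasi-inverse given by pullback along $s_i$. As a consistency check, this equivalence should coincide with the one obtained by combining the module equivalence of Proposition~\ref{thm:BrSystInv} (Point~2) for the two braided systems with the identifications $\ModCat_{\leftV}\simeq\ModCatN_{(\oV,\overline{\xi},\onu)}$ and $\ModCat_{s_i\cdot\leftV}\simeq\ModCatN_{s_i(\oV,\overline{\xi},\onu)}$ of Proposition~\ref{thm:ProdOfAlgebrasBrFamilyMod}: under these, the permutation $\rho_i\leftrightarrow\rho_{i+1}$ of braided-module components matches precisely the transposition of tensor factors effected by $s_i^{-1}$ through the maps $\iota_j$ of \eqref{eqn:iota}.
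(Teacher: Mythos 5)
Your proposal is correct and follows essentially the same route as the paper: Point~1 via Proposition~\ref{thm:BrSystInv} together with the unit-naturality of $\xi_{i,i+1}^{-1}$, and Point~2 by a graphical argument combining the YBEs with the naturality of $\xi_{i,i+1}$ (hence of $\xi_{i,i+1}^{-1}$) with respect to $\mu_i$ and $\mu_{i+1}$ --- the paper organizes this as a reduction, via repeated YBEs, to the two factors $V_i$ and $V_{i+1}$ alone, proving exactly the identity of Fig.~\ref{pic:XiIsAlgMor}. The only (minor) divergence is Point~3, where you transport the module structure directly along the algebra isomorphism, while the paper instead chains the equivalences $\ModCat_{\leftV} \simeq \ModCatN_{(\oV,\overline{\xi},\onu)} \simeq \ModCatN_{s_i(\oV,\overline{\xi},\onu)} \simeq \ModCat_{s_i \cdot \leftV}$ through braided modules (Propositions~\ref{thm:BrSystInv} and~\ref{thm:ProdOfAlgebrasBrFamilyMod}) --- precisely the identification you describe in your consistency check.
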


\begin{proof}
\begin{enumerate}
\item Proposition~\ref{thm:BrSystInv} allows to swap the components~$V_i$ and~$V_{i+1}$ of the pointed braided system $(\oV,\overline{\xi},\onu)$ from Theorem~\ref{thm:ProdOfAlgebrasBrFamily} \ref{item:braided}. The new system $s_i(\oV,\overline{\xi},\onu)$ still satisfies the conditions from  Theorem~\ref{thm:ProdOfAlgebrasBrFamily} \ref{item:braided}, and is thus a braided system of UAAs (the naturality of~$\xi_{i,i+1}^{-1}$ w.r.t. the units follows from that of~$\xi_{i,i+1}$). 
\item Theorem~\ref{thm:ProdOfAlgebrasBrFamily} \ref{item:alg} then gives a UAA structure on~$s_i(\leftV)$. Applying the YBE several times, one sees that, in order to check that $\Id_{r} \otimes \ldots  \otimes \xi_{i,i+1}^{-1} \otimes \ldots \otimes \Id_{{1}}$ is an algebra morphism, it is sufficient to work with~$V_i$ and~$V_{i+1}$ only. Namely, one has to prove the identity $\xi_{i,i+1}^{-1}  (\nu_{i+1} \otimes \nu_i) = \nu_i \otimes \nu_{i+1}$,  which follows from the naturality of $\xi_{i,i+1}^{-1}$ w.r.t. the units, and from the equality
\begin{align*}
&(\mu_i \otimes \mu_{i+1})  (\Id_i \otimes \xi_{i,i+1}^{-1} \otimes \Id_{i+1})  (\xi_{i,i+1}^{-1} \otimes \xi_{i,i+1}^{-1}) =\\
\xi_{i,i+1}^{-1}  &(\mu_{i+1} \otimes \mu_i)  (\Id_{i+1} \otimes \xi_{i,i+1} \otimes \Id_{i})
\end{align*}
of morphisms $(V_{i+1} \otimes V_{i})^{ 2} \to V_{i} \otimes V_{i+1}$ (Fig.~\ref{pic:XiIsAlgMor}). The latter results from the naturality of $\xi_{i,i+1}$ (and hence $\xi_{i,i+1}^{-1}$ ) w.r.t.~$\mu_i$ and~$\mu_{i+1}$ (Theorem \ref{thm:ProdOfAlgebrasBrFamily} \ref{item:mixed}).
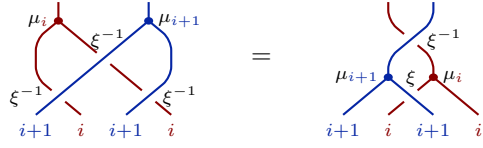
\begin{figure}\centering
\begin{tikzpicture}[xscale = 0.3, yscale=0.25]
 \draw[thick, colori,rounded corners] (0.7,1.3) -- (0,2) -- (0,4) -- (1,5);
 \draw[thick, colori] (2,0) -- (1.3,0.7);
 \draw[thick, colori] (6,0) -- (5.3,0.7);
 \draw[thick, colori] (4.7,1.3) -- (3.3,2.7);
 \draw[thick, colori] (2.7,3.3) -- (1,5) -- (1,6);
 \draw[thick, colorj,rounded corners] (4,0) -- (6,2) -- (6,4) -- (5,5); 
 \draw[thick, colorj] (0,0) -- (5,5) -- (5,6);
 \node at (6,0) [colori,below] {$\scriptstyle i$};
 \node at (4,0) [colorj,below] {$\scriptstyle {i+1}$};
 \node at (2,0) [colori,below] {$\scriptstyle i$};
 \node at (0,0) [colorj,below] {$\scriptstyle {i+1}$};
 \fill[colori] (1,5) circle (0.2);
 \fill[colorj] (5,5) circle (0.2);
 \node at (5,5) [right] {$\scriptstyle \mu_{\color{colorj}i+1}$};
 \node at (1,5) [left] {$\scriptstyle \mu_{\color{colori}i}$};
 \node at (3.2,3) [above] {$\scriptstyle \xi^{-1}$};
 \node at (5.2,1) [right] {$\scriptstyle \xi^{-1}$};
 \node at (0.8,1) [left] {$\scriptstyle \xi^{-1}$};
 \node  at (10,3){$=$};
\end{tikzpicture}
\begin{tikzpicture}[xscale = 0.3, yscale=0.25]
\node  at (-2,1.5){};
 \draw[thick, colori,rounded corners] (2,6) -- (2,5) -- (2.7,4.3);
 \draw[thick, colori,rounded corners] (4,2) -- (4,3) -- (3.3,3.7);
 \draw[thick, colori] (2.7,0.7) -- (2,0);
 \draw[thick, colori] (3.3,1.3) -- (4,2) -- (6,0);
 \draw[thick, colorj,rounded corners] (2,2) -- (2,3) -- (4,5) -- (4,6);
 \draw[thick, colorj] (0,0) -- (2,2) -- (4,0);
 \node at (6,0) [colori,below] {$\scriptstyle i$};
 \node at (4,0) [colorj,below] {$\scriptstyle {i+1}$};
 \node at (2,0) [colori,below] {$\scriptstyle i$};
 \node at (0,0) [colorj,below] {$\scriptstyle {i+1}$};
 \node at (2,2) [left] {$\scriptstyle \mu_{\color{colorj}i+1}$};
 \node at (4,2) [right] {$\scriptstyle \mu_{\color{colori}i}$};
 \node at (3,1) [above] {$\scriptstyle \xi$};
 \node at (3.3,4) [right] {$\scriptstyle \xi^{-1}$};
 \fill[colorj] (2,2) circle (0.2);
 \fill[colori] (4,2) circle (0.2);
\end{tikzpicture}
\caption{Checking that $\xi_{i,i+1}^{-1}$ is an algebra morphism}\label{pic:XiIsAlgMor}
\end{figure}
\item (The proofs of) Propositions~\ref{thm:BrSystInv} and~\ref{thm:ProdOfAlgebrasBrFamilyMod} yield the category equivalences
\begin{center}
\begin{tabular}{rcl}
$\ModCat_{\leftV}$& $\simeq \quad \ModCatN_{(\oV,\overline{\xi},\onu)} \quad \simeq \quad \ModCatN_{s_i(\oV,\overline{\xi},\onu)}$ & $\simeq\quad \ModCat_{s_i(\leftV)}$\\ 
$(M,\rho_{\leftV} )$ &$\longlongleftrightarrow $& $(M,\rho_{\leftV}  (\Id_M \otimes s_i^{-1}))$ 
\end{tabular}\qedhere
\end{center}
\end{enumerate}
\end{proof}

\begin{remark}\label{thm:UAABrSystInvSn}
As in Remark~\ref{thm:BrSystInvSn}, one gets \emph{partial $S_r$-actions} on rank~$r$ {braided systems} and {braided tensor products} of UAAs. Concretely, a permutation $\theta \in S_r$ with a minimal decomposition $\theta = s_{i_1}\cdots s_{i_k}$ sends $(\oV,\overline{\xi},\onu)$ to $s_{i_1}(\cdots(s_{i_k}(\oV,\overline{\xi},\onu))\cdots)$, and acts on UAA braided tensor products by the algebra morphism $s_{i_1}  \cdots  s_{i_k}$ (still denoted by~$\theta$), provided that the braiding components are invertible when necessary. These actions are mutually compatible, and induce module category equivalences via $(M,\rho_{\leftV}) \leftrightarrow (M,\rho_{\leftV}  (\Id_M \otimes \theta^{-1}))$.
\end{remark}

As a first illustration of the braided system theory, we now upgrade Theorem~\ref{thm:UAA} to the rank~$2$ level. A {braided} category $(\C,\otimes,\II,c)$ is needed here.

For $(V,\mu,\nu) \in \Alg(\C)$, the data $(\mu c_{V,V},\nu)$ define another, \emph{twisted} UAA structure on~$V$, denoted by~$V^{op}$. The associativity braiding becomes here $\sigma_{Ass}(V^{op})= \nu \otimes (\mu c_{V,V})$. This twisting is used to relate left and right modules:

\begin{lemma}\label{thm:lMod=rMod}
For $(V,\mu,\nu) \in \Alg(\C)$, the functors 
\begin{align}
\ModCat_{V^{op}} &\overset{\sim}{\longleftrightarrow} {_{V}}\!\ModCat, \notag\\
(M,\rho) & \longmapsto (M, \llambda(\rho) := \rho  c_{M,V}^{-1}),\label{eqn:rtol}\\
(M,\rrho(\lambda):= \lambda  c_{M,V}) &\longmapsfrom (M,\lambda),\label{eqn:ltor}
\end{align}
extended by identities on morphisms, yield a category equivalence.
\end{lemma}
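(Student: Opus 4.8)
The plan is to verify directly that the two assignments are well defined on objects, are mutually inverse, and carry module morphisms to module morphisms; since both functors are the identity on the underlying objects and morphisms of $\C$, this will yield an isomorphism, hence in particular an equivalence, of categories.

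First I would check that $\llambda(\rho) := \rho \circ c_{M,V}^{-1}$ is a genuine left $V$-module whenever $(M,\rho)$ is a right $V^{\op}$-module. The unit condition is immediate: naturality of $c$ with respect to $\nu : \II \to V$ gives $c_{M,V}^{-1} \circ (\nu \otimes \Id_M) = \Id_M \otimes \nu$ (using $c_{M,\II} = \Id_M$), so $\llambda(\rho) \circ (\nu \otimes \Id_M) = \rho \circ (\Id_M \otimes \nu) = \Id_M$. For associativity of the action, I would rewrite both $\llambda(\rho) \circ (\mu \otimes \Id_M)$ and $\llambda(\rho) \circ (\Id_V \otimes \llambda(\rho))$ in terms of $\rho$ alone. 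Pulling $c^{-1}$ across $\mu$ and across $\rho$ by naturality, and expanding the ``mixed'' braidings $c_{M,V\otimes V}^{-1}$ and $c_{M\otimes V,V}^{-1}$ by the two hexagon identities, I expect both expressions to reduce to the common normal form
$$\rho \circ (\rho \otimes \Id_V) \circ (\Id_M \otimes c_{V,V}^{-1}) \circ (c_{M,V}^{-1} \otimes \Id_V) \circ (\Id_V \otimes c_{M,V}^{-1}),$$
where the right $V^{\op}$-module axiom (with $\mu^{\op} = \mu \circ c_{V,V}$) is invoked exactly once, to turn $\rho \circ (\Id_M \otimes \mu)$ into $\rho \circ (\rho \otimes \Id_V) \circ (\Id_M \otimes c_{V,V}^{-1})$. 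This is cleanest drawn as string diagrams in the braided category, where the whole manipulation is a sequence of colored third Reidemeister moves and slidings of the trivalent $\mu$- and $\rho$-vertices past strands.

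The assignment $\rrho(\lambda) := \lambda \circ c_{M,V}$ is treated symmetrically, with $c$ in place of $c^{-1}$ and the mirror-image hexagon and naturality moves, producing a right $V^{\op}$-module from a left $V$-module. The two functors are then mutually inverse on objects precisely because $c_{M,V}$ is invertible: $\rrho(\llambda(\rho)) = \rho \circ c_{M,V}^{-1} \circ c_{M,V} = \rho$ and $\llambda(\rrho(\lambda)) = \lambda \circ c_{M,V} \circ c_{M,V}^{-1} = \lambda$. Compatibility with morphisms is then formal: if $f : M \to M'$ satisfies $\rho' \circ (f \otimes \Id_V) = f \circ \rho$, naturality of $c^{-1}$ gives $c_{M',V}^{-1} \circ (\Id_V \otimes f) = (f \otimes \Id_V) \circ c_{M,V}^{-1}$, whence $\llambda(\rho') \circ (\Id_V \otimes f) = f \circ \llambda(\rho)$, so $f$ is automatically a morphism of the associated left modules, and conversely; since both functors act as the identity on morphisms, they preserve composition and identities trivially.

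The only genuine subtlety, and the step I expect to demand the most care, is the associativity computation of the second paragraph: one must apply the hexagon identities to $c_{M,V\otimes V}$ and $c_{M\otimes V,V}$ in the correct order, and insert the $V^{\op}$-module relation at the right moment, so that the two sides actually collapse to the same expression. Everything else, including both unit conditions, the mutual inversion, and the handling of morphisms, is routine bookkeeping with naturality of $c$.
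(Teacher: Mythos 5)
Your proposal is correct, and I checked the key step: both $\llambda(\rho)\circ(\mu\otimes\Id_M)$ and $\llambda(\rho)\circ(\Id_V\otimes\llambda(\rho))$ do reduce, via naturality of $c$, one application of the $V^{op}$-module axiom, and the two hexagon identities, to the common normal form you display (the mirror direction additionally uses the YBE for $c$, which your "colored third Reidemeister moves" remark covers). The paper itself gives no argument here — it simply declares the proof straightforward — and your direct verification is exactly the routine check intended.
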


Take now two UAAs $(V,\mu,\nu)$ and $(V',\mu',\nu')$. Returning to Example~\ref{ex:TensorProdOfAlgInBraidedCat}, one gets

\begin{lemma} The data $(V_1=V, V_2=V';\, \sigma_{1,1} = \sigma_{Ass}(V),\sigma_{2,2} = \sigma_{Ass}(V'^{op}), \sigma_{1,2} =$ $c_{V,V'})$ define a braided system of UAAs, denoted by $\Bimod(V,V')$. 
\end{lemma}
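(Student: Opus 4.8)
The plan is to invoke Example~\ref{ex:TensorProdOfAlgInBraidedCat} together with Theorem~\ref{thm:ProdOfAlgebrasBrFamily} to verify that the proposed data form a braided system of UAAs. Concretely, I would take $r=2$, with the two component UAAs being $V_1 = V$ and $V_2 = V'^{op}$ (the latter equipped with the twisted multiplication $\mu' \circ c$ from Lemma~\ref{thm:lMod=rMod}), and the off-diagonal morphism $\xi_{1,2} := c_{V,V'}$. The diagonal braidings are then automatically $\sigma_{1,1} = \sigma_{Ass}(V)$ and $\sigma_{2,2} = \sigma_{Ass}(V'^{op})$, exactly as required by the statement.

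The heart of the argument is checking condition~\ref*{item:mixed} of Theorem~\ref{thm:ProdOfAlgebrasBrFamily} (or equivalently~\ref*{item:braided}), which for $r=2$ reduces to two requirements: that $\xi_{1,2} = c_{V,V'}$ be natural with respect to both multiplications $\mu$ and $\mu' \circ c$, and that the unique triple YBE on $V_1 \otimes V_2 \otimes V_3$ be vacuous (there is no third component). The triple YBE is automatic since $r=2$. For the naturality, the key observation is that $c$ is a braiding on $\C$, hence natural with respect to \emph{every} morphism of $\C$; in particular it is natural with respect to $\mu$, with respect to $\mu' \circ c$, and trivially with respect to the units $\nu$ and $\nu'$. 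This is precisely the content of Example~\ref{ex:TensorProdOfAlgInBraidedCat}, which asserts that choosing $\xi_{i,j} = c_{V_i,V_j}$ in a braided category always yields a braided system of UAAs. I would therefore state that the result is an instance of that example, applied to the two UAAs $V$ and $V'^{op}$.

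I expect the only genuine point requiring comment is the normalization hypothesis of Theorem~\ref{thm:ProdOfAlgebrasBrFamily}: each unit $\nu_i$ must be part of a normalized pair $(\nu_i, \epsilon_i)$. This is harmless here, since for $r=2$ Remark~\ref{rmk:EpsilonsNotNecessary} tells us the $\epsilon_i$ are redundant; alternatively, one checks directly that a UAA always admits such a counit-like map (for instance via the unit object structure). Thus no obstacle arises from this hypothesis. The remaining verifications --- that $\sigma_{2,2} = \sigma_{Ass}(V'^{op})$ genuinely encodes the twisted algebra $V'^{op}$ and that $V'^{op}$ is itself a UAA --- are immediate from the discussion preceding Lemma~\ref{thm:lMod=rMod}, where it was already noted that $(\mu' \circ c, \nu')$ defines a UAA structure. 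In summary, the proof is short: identify the data as the output of Example~\ref{ex:TensorProdOfAlgInBraidedCat} for the pair $(V, V'^{op})$, and cite the naturality of the categorical braiding $c$ to discharge condition~\ref*{item:mixed} of Theorem~\ref{thm:ProdOfAlgebrasBrFamily}.
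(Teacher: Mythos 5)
Your proposal is correct and takes essentially the same route as the paper, which obtains this lemma precisely by ``returning to Example~\ref{ex:TensorProdOfAlgInBraidedCat}'' (i.e., Proposition~\ref{thm:TrivialBrSystem} applied to the braided objects $(V,\sigma_{Ass}(V))$ and $(V',\sigma_{Ass}(V'^{op}))$, with the naturality of $c$ discharging all mixed conditions) and declares the verification straightforward. One minor caveat: your parenthetical claim that a UAA always admits a normalized counit is not true in a general monoidal category, but it is harmless here since the $\epsilon_i$ are needed only for the implication $3 \Rightarrow 2$ of Theorem~\ref{thm:ProdOfAlgebrasBrFamily}, which your argument never uses.
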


The proofs of the above lemmas are straightforward.

The module category equivalence from Proposition~\ref{thm:ProdOfAlgebrasBrFamilyMod} and permutation rules from Proposition~\ref{thm:ProdOfAlgebrasBrFamilyInv} apply to~$\Bimod(V,V')$. Using Observation~\ref{rmk:MultiMod} and Lemma~\ref{thm:lMod=rMod}, one interprets braided modules over this system as familiar \emph{algebra bimodules}:

\begin{proposition}\label{thm:AlgBimodAsMultiMod}
Take UAAs $(V,\mu,\nu)$ and $(V',\mu',\nu')$ in a braided category~$\C$. Let $_{V'}\!\ModCat_V$ be the category of $(V',V)$-bimodules. The following categories are equivalent:
\[\ModCat_{V'^{op}\underset{c}{\otimes} V} \simeq \ModCatN_{\Bimod(V,V')}\simeq _{V'}\!\ModCat_V \simeq \ModCatN_{s_2(\Bimod(V,V'))} \simeq \ModCat_{V \underset{c^{-1}}{\otimes} V'^{op}}.\]
\end{proposition}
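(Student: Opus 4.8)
The plan is to read off the two outer equivalences directly from Proposition~\ref{thm:ProdOfAlgebrasBrFamilyMod}, to establish the central identification of braided $\Bimod(V,V')$-modules with $(V',V)$-bimodules by hand, and to deduce the two remaining inner equivalences formally. Since $c_{V,V'}$ is the braiding of $\C$, it is invertible and natural with respect to everything (as recorded in Example~\ref{ex:TensorProdOfAlgInBraidedCat}), so all the invertibility hypotheses below are automatic.

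First I would apply Proposition~\ref{thm:ProdOfAlgebrasBrFamilyMod} to the braided system of UAAs $\Bimod(V,V')$. Its reversed tensor product is the object $V_2 \otimes V_1 = V' \otimes V$, and because $\sigma_{2,2}=\sigma_{Ass}(V'^{op})$ and $\xi_{1,2}=c_{V,V'}$, the resulting braided tensor product algebra is exactly $V'^{op}\underset{c}{\otimes} V$; this gives $\ModCat_{V'^{op}\underset{c}{\otimes} V} \simeq \ModCatN_{\Bimod(V,V')}$. Applying Proposition~\ref{thm:ProdOfAlgebrasBrFamilyInv} to the invertible component $\xi_{1,2}=c_{V,V'}$, the component swap (denoted $s_2$ in the statement) produces a braided system of UAAs whose reversed tensor product algebra is $V\underset{c^{-1}}{\otimes} V'^{op}$; this yields both the formal equivalence $\ModCatN_{\Bimod(V,V')} \simeq \ModCatN_{s_2(\Bimod(V,V'))}$ and, via Proposition~\ref{thm:ProdOfAlgebrasBrFamilyMod} once more, the rightmost equivalence $\ModCatN_{s_2(\Bimod(V,V'))} \simeq \ModCat_{V\underset{c^{-1}}{\otimes} V'^{op}}$.

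The heart of the argument is the middle equivalence $\ModCatN_{\Bimod(V,V')} \simeq {_{V'}\!\ModCat_V}$. By Observation~\ref{rmk:MultiMod}, a braided module $(M,\overline{\rho})$ over $\Bimod(V,V')$ is precisely a pair consisting of a braided $(V,\sigma_{Ass}(V))$-module $(M,\rho_1)$ and a braided $(V'^{op},\sigma_{Ass}(V'^{op}))$-module $(M,\rho_2)$ satisfying the single compatibility~\eqref{eqn:MultiBrMod} for $i=1$, $j=2$ with $\sigma_{1,2}=c_{V,V'}$. Point~\ref{item:ModEquiv} of Theorem~\ref{thm:UAA} turns $\rho_1$ into a genuine right $V$-module and $\rho_2$ into a right $V'^{op}$-module, and Lemma~\ref{thm:lMod=rMod} converts the latter into a left $V'$-action $\lambda=\rho_2\circ c_{M,V'}^{-1}$. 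The functor sending $(M,\overline{\rho})$ to $M$ equipped with the right $V$-action $\rho_1$ and the left $V'$-action $\lambda$, extended by the identity on morphisms, is my candidate equivalence.

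The one nontrivial point, and the step I expect to be the main obstacle, is checking that compatibility~\eqref{eqn:MultiBrMod} with $\sigma_{1,2}=c_{V,V'}$ is equivalent to the defining relation of a $(V',V)$-bimodule, i.e. that $\lambda$ and $\rho_1$ commute. I would verify this diagrammatically: substituting $\rho_2=\lambda\circ c_{M,V'}$ into~\eqref{eqn:MultiBrMod} and pushing $c_{M,V'}$ past the $\rho_1$-node by naturality of $c$ (together with the hexagon axiom rewriting $c_{M\otimes V,V'}$) turns both sides into the common invertible prefactor $(c_{M,V'}\otimes\Id_V)\circ(\Id_M\otimes c_{V,V'})$ composed with $\lambda\circ(\Id_{V'}\otimes\rho_1)$ on the left and $\rho_1\circ(\lambda\otimes\Id_V)$ on the right; cancelling the prefactor leaves precisely the braiding-free commutation identity $\rho_1\circ(\lambda\otimes\Id_V)=\lambda\circ(\Id_{V'}\otimes\rho_1)$, and reading the computation backwards gives the converse. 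Finally, the equivalence ${_{V'}\!\ModCat_V} \simeq \ModCatN_{s_2(\Bimod(V,V'))}$ follows either by the same bimodule bookkeeping applied to the swapped system (whose off-diagonal component is now $c_{V,V'}^{-1}$) or, more economically, by composing the central equivalence with the formal $\ModCatN_{\Bimod(V,V')} \simeq \ModCatN_{s_2(\Bimod(V,V'))}$ from the second paragraph; concatenating all five categories then yields the asserted chain.
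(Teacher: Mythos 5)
Your proposal is correct and follows essentially the same route as the paper, which states this proposition without a separate proof and instead cites exactly the ingredients you use: Proposition~\ref{thm:ProdOfAlgebrasBrFamilyMod} for the outer equivalences, Proposition~\ref{thm:ProdOfAlgebrasBrFamilyInv} for the component swap, and Observation~\ref{rmk:MultiMod} together with Theorem~\ref{thm:UAA} and Lemma~\ref{thm:lMod=rMod} for the bimodule identification. Your explicit diagrammatic check that the compatibility~\eqref{eqn:MultiBrMod} with $\sigma_{1,2}=c_{V,V'}$ reduces, after cancelling the invertible prefactor, to the commutation of the two actions is precisely the verification the paper leaves implicit, and it is carried out correctly.
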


Note that $V^{op}\underset{c}{\otimes} V$ is the \emph{enveloping algebra} of the algebra~$V$.

We then apply adjoint module theory to our bimodules. Recall Notation~\eqref{eqn:phi_i}.

\begin{proposition}\label{thm:BarCxIsBimod}
Take a bimodule $(M, \: \rho\colon M \otimes V \to M,\: \lambda \colon V' \otimes M \to M)$ over UAAs~$V$ and~$V'$ in a braided category~$\C$. The bar complex $(M\otimes T(V), d_{bar})$ for~$V$ with coefficients in~$M$ is a complex in $_{V'}\!\ModCat_V$. In other words, the differentials $(d_{bar})_n$ are bimodule morphisms, where a bimodule structure on $M \otimes V^{n}$ is given by 
\begin{align*}
\rho_{bar} &= \mu^{n+1} \colon M \otimes V^{ n} \otimes V \to M \otimes V^{n},&
\lambda_{bar} &= \lambda^1 \colon V' \otimes M \otimes V^{ n} \to M \otimes V^{ n}.
\end{align*}
\end{proposition}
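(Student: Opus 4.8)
The plan is to read the whole statement as an instance of the adjoint braided module theory (Proposition~\ref{thm:adjoint_multi_coeffs}) applied to the braided system $\Bimod(V,V')$, and then to transport the conclusion back into bimodule language via Proposition~\ref{thm:AlgBimodAsMultiMod}. Concretely, by Proposition~\ref{thm:AlgBimodAsMultiMod} and Lemma~\ref{thm:lMod=rMod} the bimodule $(M,\rho,\lambda)$ is the same thing as the braided module $(M,\rho_1=\rho,\ \rho_2=\rrho(\lambda)=\lambda\circ c_{M,V'})$ over $\Bimod(V,V')$, whose components are $V_1=V$ and $V_2=V'^{op}$. I would then apply Proposition~\ref{thm:adjoint_multi_coeffs} with $t=1$, so that the $(1,t)$-subsystem is just $(V,\sigma_{Ass}(V))$ and $T(\oV')_n^\rightarrow=V^{n}$: this produces an adjoint braided $\Bimod(V,V')$-module structure $({^\rho}\!\pi_1,{^\rho}\!\pi_2)$ on $M\otimes V^{n}$, and asserts that the braided differential ${^\rho}\!d$ on the subsystem $(V,\sigma_{Ass}(V))$ with coefficients in $M$ is a morphism of these braided modules. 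Since ${^\rho}\!d$ is exactly the bar differential $d_{bar}$ of $V$ by Point~\ref{item:bar} of Theorem~\ref{thm:UAA}, and since Proposition~\ref{thm:AlgBimodAsMultiMod} turns braided $\Bimod(V,V')$-modules into $(V',V)$-bimodules and braided morphisms into bimodule morphisms, this immediately yields that $d_{bar}$ is a bimodule morphism, with a \emph{well-defined} bimodule structure on $M\otimes V^{n}$ supplied by Point~1 of Proposition~\ref{thm:adjoint_multi_coeffs}.

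It then only remains to check that $({^\rho}\!\pi_1,{^\rho}\!\pi_2)$ corresponds, under Proposition~\ref{thm:AlgBimodAsMultiMod} and Lemma~\ref{thm:lMod=rMod}, precisely to the pair $(\rho_{bar},\lambda_{bar})$ of the statement. For the right $V$-action I would compute the block braiding $\ossigma_{V^{n},V_1}$ built from $\sigma_{1,1}=\sigma_{Ass}(V)=\nu\otimes\mu$. Pushing the single rightmost $V$-strand leftwards across $V^{n}$, the first associativity crossing multiplies it into the last factor of $V^{n}$ and deposits a unit, while each subsequent crossing merely propagates that unit to the left and leaves the remaining factors untouched; thus $\ossigma_{V^{n},V_1}$ inserts a unit in the new first slot, shifts $V^{n-1}$, and applies $\mu$ to the last two strands. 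Precomposing with $\Id_M\otimes\ossigma_{V^{n},V_1}$ and applying $\rho_1$, the leading unit is absorbed through $\rho_1\circ(\Id_M\otimes\nu)=\Id_M$, leaving exactly ${^\rho}\!\pi_1=\mu^{n+1}=\rho_{bar}$.

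For the left $V'$-action I would instead use $\sigma_{1,2}=c_{V,V'}$, so that $\ossigma_{V^{n},V_2}=c_{V^{n},V'}$ is the genuine braiding carrying the $V'$-strand to the front of $V^{n}$. Passing from the right $V'^{op}$-module ${^\rho}\!\pi_2$ to its associated left $V'$-action via Lemma~\ref{thm:lMod=rMod} amounts to composing with $c_{M\otimes V^{n},V'}^{-1}$; decomposing this inverse braiding through the hexagon as $(\Id_M\otimes c_{V^{n},V'}^{-1})\circ(c_{M,V'}^{-1}\otimes\Id_{V^{n}})$, the two $V^{n}$-braidings cancel against the $c_{V^{n},V'}$ inside ${^\rho}\!\pi_2$, and $\rho_2\circ c_{M,V'}^{-1}=\lambda\circ c_{M,V'}\circ c_{M,V'}^{-1}=\lambda$ survives. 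This gives $\llambda({^\rho}\!\pi_2)=\lambda^{1}=\lambda_{bar}$, as required.

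The main obstacle is the bookkeeping in these two identifications: the associativity-braiding telescope for ${^\rho}\!\pi_1$ and the cancellation of the $c$-braidings for ${^\rho}\!\pi_2$ must be carried out carefully (ideally diagrammatically, in the spirit of Figures~\ref{pic:MultiBrMod} and~\ref{pic:BrDiffCoeffs}), since the non-invertibility of $\sigma_{Ass}$ makes naive element-wise intuition unreliable. Everything else --- the bimodule axioms for $(\rho_{bar},\lambda_{bar})$ and the compatibility of $d_{bar}$ with both actions --- is delivered wholesale by Proposition~\ref{thm:adjoint_multi_coeffs}, so no separate module-axiom or $d_{bar}$-commutation check is needed. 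Alternatively one could bypass the machinery and verify directly that each summand $\rho^{1}$ and $(-1)^{i}\mu^{i}$ of $d_{bar}$ commutes with $\rho_{bar}$ (via associativity of $\mu$ and the right-module axiom for $\rho$) and with $\lambda_{bar}$ (via $\lambda$ commuting with multiplication on the $V$-factors), but this simply reproduces by hand what the adjoint module proposition already packages.
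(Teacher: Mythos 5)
Your proposal is correct and takes essentially the same route as the paper's own proof: interpret the bimodule $(M,\rho,\lambda)$ as a braided $\Bimod(V,V')$-module via Proposition~\ref{thm:AlgBimodAsMultiMod} and Lemma~\ref{thm:lMod=rMod}, invoke the adjoint-module Proposition~\ref{thm:adjoint_multi_coeffs}, identify ${^{\rho}}\!d$ with $d_{bar}$ through Theorem~\ref{thm:UAA}.\ref{item:bar}, and then verify ${^\rho}\!\pi_1=\mu^{n+1}$ and $\llambda({^{\rrho(\lambda)}}\!\pi_2)=\lambda^1$, which is exactly the paper's displayed computation (you supply a bit more detail on the telescoping of the $\sigma_{Ass}$-crossings and the cancellation of the $c$-braidings). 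One remark: your parameter choice $t=1$ is the right one, since it makes $T(\oV')_n^\rightarrow=V^{n}$ and supplies \emph{both} adjoint actions ${^\rho}\!\pi_1$ and ${^\rho}\!\pi_2$ (matching the paper's own use of $t=1$ in the Hopf-bimodule analogue, Proposition~\ref{thm:BarCxIsHopfBimod}), whereas the paper's proof says ``choosing $t=2$'', which appears to be a slip.
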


\begin{proof}
Plug into Proposition~\ref{thm:adjoint_multi_coeffs} the system $\Bimod(V,V')$, the bimodule $(M,\rho, \lambda)$ (interpreted as a $\Bimod(V,V')$-module via Proposition~\ref{thm:AlgBimodAsMultiMod}), and $t=2$. One obtains the compatibility of the braided differential ${^{\rho}}\! d = d_{bar}$ (cf. Theorem~\ref{thm:UAA}, Point~\ref{item:bar}) with the braided $\Bimod(V,V')$-module structures on the $M\otimes V^n$. Using Proposition~\ref{thm:AlgBimodAsMultiMod} again, one interprets these braided modules as $(V',V)$-bimodules, with the explicit structure from Lemma~\ref{thm:lMod=rMod}:
\begin{align*}
{^\rho}\!\pi_1 &= \rho_1  (\Id_M \otimes \ossigma_{V^{ n},V}) = \mu^{n+1},\\
\lambda ({^{\rrho(\lambda)}}\!\pi_2) &={^{\rrho(\lambda)}}\!\pi_2  c_{M\otimes V^{ n} ,V'}^{-1} 
= (\lambda  c_{M,V'})^1  (\Id_M \otimes \ossigma_{V^{ n},V'})  c_{M\otimes V^{ n},V'}^{-1}\\
&= (\lambda  c_{M,V'})^1  (\Id_M \otimes c_{V^{ n},V'})  c_{M\otimes V^{ n},V'}^{-1}
= \lambda^1.\qedhere
\end{align*}
\end{proof}

This bimodule structure on the bar complex is fundamental for interpreting the \emph{Hochschild (co)homology} via the differential induced on coinvariants by~$d_{bar}$.

\section{A braided interpretation of crossed products}\label{sec:crossed}

We now present a rank~$3$ braided system. It recovers Panaite's braided treatment of \emph{two-sided crossed products}~\cite{Panaite2}, and its extension~\cite{IteratedTwisted} to the \emph{generalized two-sided crossed products} $A\gsm C\gtl B$ of Bulacu--Panaite--Van Oystaeyen \cite{PanaiteGen}. Our component permutation technique yields $6$ isomorphic versions of the algebra $A\gsm C\gtl B$. This extends algebra isomorphisms from~\cite{Panaite2,IteratedTwisted}, and simplifies their originally very technical proof. Further, our adjoint module machinery yields a $(B,A)$-bimodule structure on~$C^{n}$, used for constructing a bialgebra homology theory in Section~\ref{sec:bialg}.

First, we need categorical versions of some basic algebraic notions.

\begin{definition}\label{def:cat_bialg}
\begin{itemize}
\item 
A \emph{bialgebra} in a {braided} category $(\C,\otimes,\II,c)$ is an object~$H$ endowed with a UAA structure $(\mu,\nu)$ and a counital coassociative coalgebra (= \emph{coUAA}) structure $(\Delta,\varepsilon)$,  compatible in the following sense:
\begin{align}
\Delta  \mu &= (\mu \otimes \mu) c^2  (\Delta \otimes \Delta), &
\Delta  \nu &= \nu \otimes \nu,&
\varepsilon  \mu &= \varepsilon \otimes \varepsilon, & \varepsilon  \nu &= \Id_{\II}.\label{eqn:cat_bialg}
\end{align}
It is a \emph{Hopf algebra} if it carries an \emph{antipode}, i.e., an endomorphism~$s$ satisfying 
\begin{equation}\label{eqn:s}
\mu  (s\otimes \Id_H)  \Delta = \mu  (\Id_H \otimes s)  \Delta  = \nu  \varepsilon.
\end{equation}

\item
For a bialgebra~$H$ in~$\C$, a left $H$-\emph{module algebra} is a UAA $(M,\mu_M,\nu_M)$ endowed with a left $H$-module structure $\lambda\colon H \otimes M \to M$, such that~$\mu_M$ and~$\nu_M$ are $H$-module morphisms (Fig.~\ref{pic:Bialg}):
\begin{align}
\lambda  (\Id_H \otimes\mu_M) &= \mu_M  (\lambda \otimes \lambda)  c^2  (\Delta \otimes \Id_M^{\otimes 2}),
&\lambda  (\Id_H \otimes\nu_M) &= \nu_M  \varepsilon.\label{eqn:ModAlg}
\end{align}
Right $H$-module algebras and $H$-(bi)(co)module algebras are defined similarly.

\item The categories of bialgebras / Hopf algebras / $H$-(co)module algebras and their morphisms in~$\C$ are denoted by, respectively, $\Bialg(\C)$, $\HAlg(\C)$, ${}_H\!\ModAlg$, $\ModAlg_H$, ${}^H\!\ModAlg$, etc.
\end{itemize}
\end{definition}

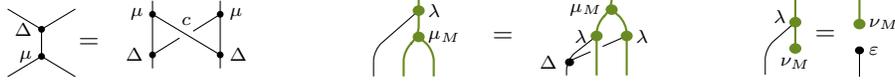
\begin{figure}\centering
\begin{tikzpicture}[xscale=0.9,yscale=0.9]
 \draw (0,0) -- (0.5,0.3) -- (0.5,0.7) -- (0,1);
 \draw (1,0) -- (0.5,0.3) -- (0.5,0.7) -- (1,1);
 \node at (0.5,0.3) [left] {$\scriptstyle \mu$};
 \node at (0.5,0.7) [left] {$\scriptstyle \Delta$};
 \fill  (0.5,0.3) circle (0.05);
 \fill  (0.5,0.7) circle (0.05);
 \node at (1.2,0.5) {$=$};
\end{tikzpicture}
\begin{tikzpicture}[xscale=0.9,yscale=0.9] 
 \draw (3,0) -- (3,1);
 \draw (4,1) -- (4,0);
 \draw (3,0.2) -- (3.4,0.45);
 \draw (3.6,0.55) -- (4,0.8);
 \draw (3,0.8) -- (4,0.2);
 \fill (3,0.2) circle (0.05);
 \fill (3,0.8) circle (0.05);
 \fill (4,0.8) circle (0.05);
 \fill (4,0.2) circle (0.05);
 \node at (3,0.8) [left] {$\scriptstyle \mu$};
 \node at (3,0.2) [left] {$\scriptstyle \Delta$};
 \node at (4,0.8) [right] {$\scriptstyle \mu$};
 \node at (4,0.2) [right] {$\scriptstyle \Delta$};
 \node at (3.5,0.5) [above] {$\scriptstyle c$};
 \node at (6,0) {};
\end{tikzpicture}
\begin{tikzpicture}[xscale=0.4,yscale=0.35]
 \draw [rounded corners] (0,0) -- (0,1) -- (1.5,2.5);
 \draw [colorM, thick, rounded corners] (1,0) -- (1,1) -- (1.5,1.5) -- (1.5,3);
 \draw [colorM, thick, rounded corners] (2,0) -- (2,1) -- (1.5,1.5);
 \node at (1.5,1.5) [right] {$\scriptstyle \mu_M$};
 \node at (1.5,2.5) [right] {$\scriptstyle \lambda$};
 \fill [colorM] (1.5,1.5) circle (0.2);
 \fill [colorM] (1.5,2.5) circle (0.2);
 \node at (4.3,1.5) {$=$};
\end{tikzpicture}
\begin{tikzpicture}[xscale=0.4,yscale=0.35]
 \draw [rounded corners] (0,0) -- (0,0.6) -- (1,1.5);
 \draw [rounded corners] (0,0.5) -- (0.8,0.9);
 \draw [rounded corners] (1.2,1.1) -- (2,1.5);
 \draw [colorM, thick, rounded corners] (1,0) -- (1,2) -- (1.5,2.5) -- (1.5,3);
 \draw [colorM, thick, rounded corners] (2,0) -- (2,2) -- (1.5,2.5);
 \node at (1.5,2.5) [left] {$\scriptstyle \mu_M$};
 \node at (0,0.5) [left] {$\scriptstyle \Delta$};
 \node at (1,1.5) [left] {$\scriptstyle \lambda$};
 \node at (2,1.5) [right] {$\scriptstyle \lambda$};
 \fill [colorM] (1.5,2.5) circle (0.2);
 \fill (0.1,0.5) circle (0.15);
 \fill [colorM] (1,1.5) circle (0.2);
 \fill [colorM] (2,1.5) circle (0.2);
 \node at (6,1.5) {};
\end{tikzpicture}
\begin{tikzpicture}[xscale=0.4,yscale=0.35]
 \draw [rounded corners] (0,0) -- (0,1) -- (1,2);
 \draw [colorM, thick, rounded corners] (1,1) -- (1,3);
 \node at (1,1) [below] {$\scriptstyle \nu_M$};
 \node at (1,2.2) [left] {$\scriptstyle \lambda$};
 \fill [colorM] (1,1) circle (0.2);
 \fill [colorM] (1,2) circle (0.2);
 \node at (2,1.5) {$=$};
\end{tikzpicture}
\begin{tikzpicture}[xscale=0.4,yscale=0.35]
 \draw [rounded corners] (0,0) -- (0,1);
 \draw [colorM, thick, rounded corners] (0,2) -- (0,3);
 \node at (0,2) [right] {$\scriptstyle \nu_M$};
 \node at (0,1) [right] {$\scriptstyle \varepsilon$};
 \fill [colorM] (0,2) circle (0.2);
 \fill (0,1) circle (0.15);
\end{tikzpicture}
\caption{Main bialgebra and module algebra axioms}\label{pic:Bialg}
\end{figure}

\begin{proposition}\label{thm:smash}
Take a bialgebra~$H$, a left $H$-module algebra $(A,\lambda)$, a right $H$-module algebra $(B,\rho)$, and an $H$-bicomodule algebra $(C,\,\delta_l\colon C \to H \otimes C,\,\delta_r\colon C \to C \otimes H)$ in a {symmetric} category $(\C,\otimes,\II,c)$. Then 
\begin{enumerate}
\item The UAAs $(B,C,A)$ form a braided system of UAAs, with
\begin{align*}
\xi_{1,2} &= (\Id_C \otimes \rho)  (c_{B,C} \otimes \Id_H)  (\Id_B \otimes \delta_r),&\xi_{1,3} &= c_{B,A},\\
\xi_{2,3} &= (\lambda \otimes \Id_C)  (\Id_H \otimes c_{C,A})  (\delta_l \otimes \Id_A ).&&
\end{align*}
\item Formulas \eqref{eqn:LeftV}-\eqref{eqn:LeftV'} for the $\xi_{i,j}$ above define a UAA structure on $A \otimes C \otimes B$. 
\item One has a module category equivalence 
$\ModCatN_{(B,C,A; \overline{\xi})} \simeq \ModCat_{A {\otimes_{\xi}} C {\otimes_{\xi}} B}$.
\end{enumerate}
\end{proposition}

The braiding from the proposition is shown in Fig.~\ref{pic:CrossedProd}. Here and below the underlying braiding of a symmetric category is depicted by a solid crossing.

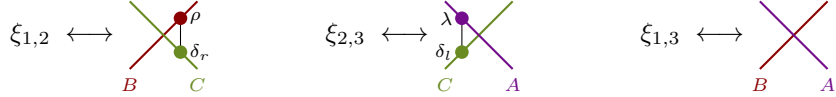
\begin{figure}\centering
\begin{tikzpicture}[scale=0.9]
 \node at (-1,0.5) {$\xi_{1,2} \;\longleftrightarrow $};
 \draw [thick, colori] (0,0) -- (1,1);
 \draw [thick, colorM] (0,1) -- (1,0);
 \draw  (0.75,0.25) -- (0.75,0.75);
 \node at (0.75,0.25) [right] {$\scriptstyle \delta_r$};
 \node at (0.75,0.75) [right] {$\scriptstyle \rho$};
 \fill [colorM] (0.75,0.25) circle (0.1);
 \fill [colori] (0.75,0.75) circle (0.1);
 \node at (0,0) [below,colori] {$\scriptstyle B$};
 \node at (1,0) [below,colorM] {$\scriptstyle C$};
 \node at (2.5,0.5) {};
\end{tikzpicture}
\begin{tikzpicture}[scale=0.9]
 \node at (-1,0.5) {$\xi_{2,3} \;\longleftrightarrow$};
 \draw [thick, colorM] (0,0) -- (1,1);
 \draw [thick, colork] (0,1) -- (1,0);
 \draw  (0.25,0.25) -- (0.25,0.75);
 \node at (0.25,0.25) [left] {$\scriptstyle \delta_l$};
 \node at (0.25,0.75) [left] {$\scriptstyle \lambda$};
 \fill [colorM] (0.25,0.25) circle (0.1);
 \fill [colork] (0.25,0.75) circle (0.1);
 \node at (1,0) [below,colork] {$\scriptstyle A$};
 \node at (0,0) [below,colorM] {$\scriptstyle C$};
 \node at (2.5,0.5) {};
\end{tikzpicture}
\begin{tikzpicture}[scale=0.9]
 \node at (-1,0.5) {$\xi_{1,3} \;\longleftrightarrow $};
 \draw [thick, colori] (0,0) -- (1,1);
 \draw [thick, colork] (0,1) -- (1,0);;
 \node at (0,0) [below,colori] {$\scriptstyle B$};
 \node at (1,0) [below,colork] {$\scriptstyle A$};
\end{tikzpicture}
\caption{A braided system for a two-sided crossed product}\label{pic:CrossedProd}
\end{figure}

\begin{proof}
The key point is to verify the conditions of Theorem~\ref{thm:ProdOfAlgebrasBrFamily} \ref{item:mixed} for the~$\xi$: 
\begin{itemize}
\item The YBE on $B \otimes C \otimes A$ rewrites (using the naturality of~$c$) as 
\begin{align*}
& (\llambda(\rho) \otimes \Id_M \otimes \rrho(\lambda)) (\Id_A \otimes \delta_{l,r} \otimes \Id_B) p =\\ &(\llambda(\rho) \otimes \Id_M \otimes \rrho(\lambda)) (\Id_A \otimes \delta_{r,l} \otimes \Id_B) p.
\end{align*}
Here $p = (c_{C,A} \otimes \Id_B)(\Id_C \otimes c_{B,A})(c_{B,C} \otimes \Id_A)$; $\llambda$ and $\rrho$ are defined by \eqref{eqn:rtol}-\eqref{eqn:ltor}; $\delta_{l,r} = (\delta_l \otimes \Id_H) \delta_r$ and $\delta_{r,l} = (\Id_H \otimes \delta_r) \delta_l$ are morphisms $C \to H \otimes C \otimes H$. Now, the left-right $H$-coaction compatibility for~$C$ yields $\delta_{l,r} = \delta_{r,l}$. 
\item The naturality of the~$\xi$ w.r.t. the~$\mu$ is a consequence of the defining properties of $H$-(bico)module algebras. Here we show that~$\xi_{1,2}$ is natural w.r.t.~$\mu_B$, the other cases being analogous:
\begin{align*}
\xi_{1,2}&  (\mu_B \otimes \Id_C)\\
 &\stackrel{1}{=}(\Id_C \otimes \rho)  (c_{B,C} \otimes \Id_H)  (\mu_B \otimes \delta_r) \\& \stackrel{2}{=} (\Id_C \otimes \rho) (\Id_C \otimes \mu_B \otimes \Id_H)  (c_{B \otimes B,C} \otimes \Id_H)  (\Id_B^{\otimes 2} \otimes \delta_r) 
\\&\stackrel{3}{=} (\Id_C \otimes \mu_B)  (\Id_C \otimes \rho^{\otimes 2}) (\Id_{C\otimes B} \otimes c_{B,H} \otimes \Id_H)  
(c_{B \otimes B,C} \otimes \Delta_H)  (\Id_B^{\otimes 2} \otimes \delta_r)
\\& \stackrel{4}{=} (\Id_C \otimes \mu_B)  (\Id_C \otimes \rho \otimes \Id_B)  (c_{B,C} \otimes \Id_{H\otimes B}) (\Id_B \otimes \delta_r \otimes \rho) \\ 
& \qquad\qquad (\Id_B \otimes c_{B,C} \otimes \Id_H)  (\Id_B^{\otimes 2} \otimes \delta_r) 
\\&\stackrel{5}{=} (\Id_C \otimes \mu_B)  (\xi_{1,2}  \otimes \Id_B)  (\Id_B \otimes \xi_{1,2}).
\end{align*}
We used the definition of~$\xi_{1,2}$ (steps 1 and 5), the naturality of~$c$ (2), the definition of right $H$-module algebra for~$B$ (3) and that of right $H$-comodule for~$C$ (4). The easiest way to follow this proof is to draw diagrams!
\item Similarly, the naturality of the~$\xi$ w.r.t. the units follows from the naturality of~$c$ and from the definition of $H$-(co)module algebras.
\end{itemize}

Theorem~\ref{thm:ProdOfAlgebrasBrFamily} \ref{item:braided} then confirms that the~$\xi$ together with the~$\sigma_{Ass}$ form a braiding, while Point~\ref{item:alg} asserts that $A {\otimes_{\xi}} C {\otimes_{\xi}} B$ is an UAA. Finally, Proposition~\ref{thm:ProdOfAlgebrasBrFamilyMod} gives the required module category equivalence.
\end{proof}

Our proposition recovers the \emph{generalized two-sided crossed product}  $A\gsm C\gtl B:= A {\otimes_{\xi}} C {\otimes_{\xi}} B$ from~\cite{PanaiteGen}. The choice $C=H$ (with $\Delta_H$ as coactions) yields the \emph{two-sided crossed product} $A\#H\#B:= A {\otimes_{\xi}} H {\otimes_{\xi}} B$ of Hausser--Nill \cite{HN}. We thus replace the original technical associativity and module-category-equivalence verifications for $A\gsm C\gtl B$ with a more conceptual proof. 

Further, forgetting the~$B$ (or~$A$) part of the structure and taking as~$C$ a left (respectively, right) $H$-comodule, one obtains rank~$2$ braided systems. This gives a braided treatment of (a generalized version of) left / right {\emph{crossed (or smash) products}} $A\#H:= A {\otimes_{\xi}} H$ and $H\#B:= H {\otimes_{\xi}} B$.

If~$H$ has an invertible antipode~$s$, then all the~$\xi$ are invertible:
\begin{align*}
\xi_{1,2}^{-1} &= ((\rho  c_{H,B})\otimes \Id_C)  (s^{-1} \otimes c_{C,B}) ((c_{C,H}  \delta_r) \otimes \Id_B ), &\xi_{1,3}^{-1} &= c_{A,B},\\
\xi_{2,3}^{-1} &= (\Id_C \otimes (\lambda  c_{A,H}))  (c_{A,C} \otimes s^{-1})  (\Id_A \otimes (c_{H,C}  \delta_l)).&&
\end{align*}
Proposition~\ref{thm:ProdOfAlgebrasBrFamilyInv} then allows to permute the components of $A {\otimes_{\xi}} C {\otimes_{\xi}} B$, producing six pairwise isomorphic UAAs with pairwise equivalent module categories. In particular, one recovers the algebra isomorphism $A\#H\#B \simeq (A\otimes B) \bowtie H$ from~\cite{HN}.

Next, after a preliminary general lemma, we apply our adjoint braided module theory to the braided system from Proposition~\ref{thm:smash}, with trivial coefficients $M=\II$. 

\begin{lemma}\label{thm:adjointBiModules}
Take a rank~$r$ braided system $(\oV,\osigma)$ in a {symmetric} category $(\C,\otimes,\II,c)$, with $\sigma_{1,r} = c_{V_1,V_r}$. For this system, take two braided characters~$\overline{\epsilon}$ and~$\overline{\zeta}$. Then the right $(V_r,\sigma_{r,r})$-module structure ${^\epsilon}\!\pi_r$ and the left $(V_1,\sigma_{1,1})$-module structure  $\pi_1^\zeta$ on $T(\oV)_n^\rightarrow$ commute:
${^\epsilon}\!\pi_r  (\pi_1^\zeta \otimes \Id_{r}) = \pi_1^\zeta   (\Id_{1} \otimes {^\epsilon}\!\pi_r) \, \colon V_1 \otimes T(\oV)_n^ \to \otimes V_r\rightarrow T(\oV)_n^\rightarrow$.
\end{lemma}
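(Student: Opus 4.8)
The plan is to unwind both adjoint actions (via Proposition~\ref{thm:adjoint_multi_coeffs}, its left analogue, and Notation~\ref{not:ossigma}, all taken with $M=\II$) into explicit composites of braiding components and the two capping morphisms $\epsilon_r\colon V_r\to\II$ and $\zeta_1\colon V_1\to\II$, and then to read the desired identity as the independence of ``dragging $V_1$ rightwards'' and ``dragging $V_r$ leftwards'' through the block $P:=T(\oV)_n^\rightarrow$. Concretely I would start from
\begin{align*}
{^\epsilon}\!\pi_r &= (\epsilon_r\otimes\Id_{P})\circ\ossigma_{P,V_r}\colon P\otimes V_r\to P, &
\pi_1^\zeta &= (\Id_{P}\otimes\zeta_1)\circ\ossigma_{V_1,P}\colon V_1\otimes P\to P,
\end{align*}
so that both sides of the claim are morphisms $V_1\otimes P\otimes V_r\to P$.

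The engine of the argument is the hypothesis $\sigma_{1,r}=c_{V_1,V_r}$ together with the naturality of the symmetric braiding $c$ with respect to the caps. Since $c_{\II,-}=c_{-,\II}=\Id$, naturality of $c$ along $\zeta_1$ and along $\epsilon_r$ yields
\begin{align*}
(\Id_{V_r}\otimes\zeta_1)\circ\sigma_{1,r} &= \zeta_1\otimes\Id_{V_r}, &
(\epsilon_r\otimes\Id_{V_1})\circ\sigma_{1,r} &= \Id_{V_1}\otimes\epsilon_r.
\end{align*}
In words: crossing $V_1$ past $V_r$ and then capping one of the two strands is the same as capping it without the crossing. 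This is the mechanism that will let me insert, and later absorb, an artificial $V_1$--$V_r$ crossing.

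With these in hand I would introduce the ``full'' braiding $\Psi\colon V_1\otimes P\otimes V_r\to V_r\otimes P\otimes V_1$ that drags $V_1$ to the far right and $V_r$ to the far left, the two outer strands crossing each other exactly once via $\sigma_{1,r}$, and set $\Phi:=(\epsilon_r\otimes\Id_{P}\otimes\zeta_1)\circ\Psi$. Every elementary crossing in $\Psi$ is an admissible $\sigma_{i,j}$ with $i\le j$ (of type $\sigma_{1,k_p}$, $\sigma_{k_p,r}$, or $\sigma_{1,r}$), so $\Phi$ is defined. Factoring $\Psi$ so that $V_1$ travels first, I would peel the central $\sigma_{1,r}$ off against the final cap $\zeta_1$ using the first naturality identity, obtaining ${^\epsilon}\!\pi_r\circ(\pi_1^\zeta\otimes\Id_{r})=\Phi$; factoring $\Psi$ so that $V_r$ travels first and using the second identity gives $\pi_1^\zeta\circ(\Id_{1}\otimes{^\epsilon}\!\pi_r)=\Phi$. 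Comparing the two establishes the lemma.

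The one genuinely delicate point --- the step I expect to be the main obstacle --- is the coherence making $\Psi$ (equivalently, the equality of the two factorisations used above) unambiguous: dragging $V_1$ right then $V_r$ left must agree with dragging $V_r$ left then $V_1$ right. This is precisely the well-definedness of the partial $B_{n+2}^+$-action $b\mapsto b^{\osigma}$, and, once unwound, it rests only on the colored YBE~\eqref{eqn:YB} for the triples $V_1\otimes V_{k_p}\otimes V_r$ (crossings of $V_1$ and of $V_r$ with two \emph{distinct} $P$-strands commute by bifunctoriality of $\otimes$). I would therefore either quote that well-definedness directly, or run a short induction on $n$, peeling off one $P$-strand at a time and commuting its two crossings past the central $\sigma_{1,r}$ by a single use of~\eqref{eqn:YB}; the base case $n=0$ is the trivial interchange of $\zeta_1$ and $\epsilon_r$ on disjoint tensor factors.
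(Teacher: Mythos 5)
Your proposal is correct and follows essentially the same route as the paper: the common value $\Phi=(\epsilon_r\otimes\Id_{P}\otimes\zeta_1)\circ\Psi$ you construct is exactly the canonical form $(\epsilon_r \otimes \Id_{T(\oV)_n^\rightarrow} \otimes \zeta_1) \circ (\ossigma_{T(\oV)_n^\rightarrow,V_r} \otimes \Id_1) \circ \ossigma_{V_1,T(\oV)_n^\rightarrow \otimes V_r}$ to which the paper reduces both sides, using the same two ingredients (naturality of $c$ with respect to the components $\epsilon_r$ and $\zeta_1$, and the colored YBE). The only difference is that you spell out the coherence of the two factorisations of $\Psi$, which the paper leaves implicit in the phrase ``with the help of the YBE''.
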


\begin{proof}
The categorical braiding~$c$ is natural w.r.t. the components~$\epsilon_r$ and~$\zeta_1$ of our braided characters. This allows to rewrite the desired identity as
\begin{align*}
& (\epsilon_r \otimes \Id_{T(\oV)_n^\rightarrow} \otimes \zeta_1)  (\ossigma_{T(\oV)_n^\rightarrow,V_r} \otimes \Id_1)  \ossigma_{V_1,T(\oV)_n^\rightarrow \otimes V_r} =\\
& (\epsilon_r \otimes \Id_{T(\oV)_n^\rightarrow} \otimes \zeta_1)  (\Id_r \otimes \ossigma_{V_1,T(\oV)_n^\rightarrow}) \ossigma_{V_1 \otimes T(\oV)_n^\rightarrow,V_r},
\end{align*}
which is checked by a repeated application of the YBE.
\end{proof}

We now return to two-sided crossed products. Recall the notation~$\varphi^i$ from~\eqref{eqn:phi_i}. Put
\begin{equation}\label{eqn:omega}
\omega_{2n} = \bigl( \begin{smallmatrix}
 1 & 2 & \ldots & n &  n+1 & n+2 & \ldots & 2n \\
 1 & 3 & \ldots & 2n-1 & 2 & 4 & \ldots & 2n \\  
\end{smallmatrix} \bigr)
\in S_{2n}.
\end{equation}

\begin{proposition}\label{thm:SmashadjointModules}
In the settings of Proposition~\ref{thm:smash}, choose algebra characters~$\epsilon_A$ and~$\epsilon_B$ for~$A$ and~$B$. The morphisms below turn the tensor powers~$C^{n}$ into bimodules:
\begin{align*}
{^{\epsilon_A}}\!\pi &=  (\epsilon_A)^1  \lambda^1 (\Id_H \otimes c_{C^{ n},A})  (\mu^1)^{(n-1)}  ((\omega_{2n}^{-1}  \delta_l^{\otimes n}) \otimes \Id_A) \quad : \quad C^{ n} \otimes A\rightarrow  C^{ n} ,\\
\pi^{\epsilon_B} &=  (\epsilon_B)^{n+1}  \rho^{n+1} (c_{B,C^{ n}} \otimes \Id_H)  (\mu^{n+2})^{(n-1)}  (\Id_B \otimes (\omega_{2n}^{-1}  \delta_r^{\otimes n})) \quad : \quad B \otimes C^{ n} \rightarrow  C^{ n}
\end{align*}
(Fig.~\ref{pic:SmashBimodStructure}), where~$S_{2n}$ acts on $C^{2n}$ via the symmetric braiding~$c$, and the notation $(\mu^i)^{(k)}$ stands for the map~$\mu^i$ iterated $k$~times.
\end{proposition}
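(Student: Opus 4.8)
The plan is to realise the two announced actions as \emph{adjoint braided module} structures and then invoke the commutation lemma. Note that $C^n = T((C))_n^\rightarrow$ is an ordered tensor product built from the single middle component $V_2 = C$ of the rank~$3$ system $(B,C,A)$ of Proposition~\ref{thm:smash}, so both Proposition~\ref{thm:adjoint_multi_coeffs} and Lemma~\ref{thm:adjointBiModules} apply to it. The right $A$-action will come from the tail subsystem $(\oV,\overline{\xi})[2,3] = (C,A)$, the left $B$-action from the head subsystem $(\oV,\overline{\xi})[1,2] = (B,C)$, and their commutativity from the fact that $\sigma_{1,3} = \xi_{1,3} = c_{B,A}$.

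First I would produce the right action. Applying Proposition~\ref{thm:adjoint_multi_coeffs} to the rank~$2$ subsystem $(C,A)$ with trivial coefficients $M = \II$, acting character $\epsilon_A$ on $A$, and $t = 1$ (so that $\oV' = (C)$ and $M \otimes T(\oV')_n^\rightarrow = C^n$), the $i=j=2$ instance yields a right braided $(A,\sigma_{Ass}(A))$-module structure
$$ {^{\epsilon_A}}\!\pi = (\epsilon_A \otimes \Id_{C^n}) \circ \ossigma_{C^n,A} \quad : \quad C^n \otimes A \rightarrow C^n. $$
Only the character $\epsilon_A$ and the component $\xi_{2,3}$ intervene here, and the $i=j=2$ case of the proof of Proposition~\ref{thm:adjoint_multi_coeffs} (a single YBE relation plus the character property of $\epsilon_A$) uses no character on $C$; this is important, since an arbitrary $H$-bicomodule algebra $C$ carries none. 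Because $\sigma_{2,2} = \sigma_{Ass}(A)$, Theorem~\ref{thm:UAA}.\ref*{item:ModEquiv} turns this into a genuine right $A$-module, the pointedness ${^{\epsilon_A}}\!\pi \circ (\Id_{C^n} \otimes \nu_A) = \Id_{C^n}$ following from $\epsilon_A \circ \nu_A = \Id_\II$ together with the naturality of $\xi_{2,3}$ with respect to $\nu_A$. The left action $\pi^{\epsilon_B}$ is produced symmetrically from the head subsystem $(B,C)$ via the left-handed analogue of Proposition~\ref{thm:adjoint_multi_coeffs}, using only $\epsilon_B$ and $\xi_{1,2}$.

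Next I would unwind $\ossigma_{C^n,A}$ into the stated closed form. Expanding $\ossigma_{C^n,A} = \xi_{2,3}^1 \circ \cdots \circ \xi_{2,3}^n$ and substituting $\xi_{2,3} = (\lambda \otimes \Id_C)\circ(\Id_H \otimes c_{C,A})\circ(\delta_l \otimes \Id_A)$, each of the $n$ copies of $C$ emits, through $\delta_l$, one $H$-factor which then acts on $A$ via $\lambda$. The associativity of the $H$-action $\lambda$ merges these $n$ successive actions into a single $\lambda$ applied to the product, taken in $H$, of the emitted factors; meanwhile the naturality of the symmetric braiding $c$ lets me pull all the $\delta_l$ to the bottom and gather the $H$-factors to the left. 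The reindexing realising this gathering is exactly $\omega_{2n}^{-1}$ of~\eqref{eqn:omega}, sending the interleaved word $H_1 C_1 \cdots H_n C_n$ to $H_1 \cdots H_n \otimes C_1 \cdots C_n$. Collapsing $H^{\otimes n}$ by $(\mu^1)^{\circ (n-1)}$, swapping $C^n$ past $A$ by $c_{C^n,A}$, acting by $\lambda^1$ and finally by $(\epsilon_A)^1$ then reproduces the announced formula; the computation for $\pi^{\epsilon_B}$ is the mirror image, using $\rho$, $\delta_r$ and $c_{B,C^n}$.

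Finally I would establish that the two actions commute, so that together they make $C^n$ a $(B,A)$-bimodule. This is precisely Lemma~\ref{thm:adjointBiModules} for the rank~$3$ system $(B,C,A)$: its hypothesis $\sigma_{1,3} = c_{B,A}$ holds by Proposition~\ref{thm:smash}, and its proof---which uses only the naturality of $c = \xi_{1,3}$ with respect to the characters $\epsilon_A,\epsilon_B$ together with the YBE on $B\otimes C\otimes A$---applies verbatim with the passive object taken to be $C^n$ rather than the full $T(\oV)_n^\rightarrow$. The main obstacle is not conceptual but combinatorial: matching the abstract adjoint morphism $(\epsilon_A \otimes \Id_{C^n}) \circ \ossigma_{C^n,A}$ with the explicit formula, that is, tracking the exact order in which the emitted $H$-factors are multiplied and verifying that the bookkeeping permutation is $\omega_{2n}^{-1}$ and not some other shuffle. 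Some care is also needed to phrase the first two steps so that only the characters on $A$ and $B$ are invoked, keeping the construction available for an arbitrary $H$-bicomodule algebra $C$.
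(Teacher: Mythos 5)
Your proposal is correct and follows essentially the same route as the paper's proof: adjoint braided module structures from Proposition~\ref{thm:adjoint_multi_coeffs} with the algebra characters~$\epsilon_A$, $\epsilon_B$ as coefficients, conversion to genuine $A$- and $B$-module structures via unit naturality and Theorem~\ref{thm:UAA}, and commutation of the two actions via Lemma~\ref{thm:adjointBiModules}. The only difference is bookkeeping: the paper applies the adjoint proposition (and its mirror) to the full rank~$3$ system $(B,C,A)$ with $t=3$, respectively $t=1$, after relaxing its hypothesis so that coefficients need only be braided modules over the tail subsystem $(\oV,\osigma)[t,r]$ --- the same difficulty (that $\epsilon_A$, $\epsilon_B$ are not characters on any system containing~$C$) which you resolve by working in the rank~$2$ subsystems $(C,A)$ and $(B,C)$ and restricting to the instances of the compatibility proof that avoid a character on~$C$.
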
 

\begin{figure}\centering
\begin{tikzpicture}[xscale=0.65,yscale=0.6]
 \node at (-3.5,1.5)  {${^{\epsilon_A}}\!\pi \, \longleftrightarrow$};
 \draw[colorM, thick] (0,0) --  (0,3.5);
 \draw[colorM, thick] (1,0) -- (1,3.5);
 \draw[colorM, thick] (2,0) -- (2,3.5);
 \draw[colork, thick, rounded corners] (3,0) -- (3,0.5) -- (-1,2.5) -- (-1,3);
 \draw (0,0.5) -- (-1,2) -- (-1,2.5);
 \draw (1,0.5) -- (-1,2);
 \draw (2,0.5) -- (-1,2);
 \node at (0,0.5) [right] {$\scriptstyle \delta_l$};
 \node at (1,0.5) [right] {$\scriptstyle \delta_l$};
 \node at (2,0.5) [right] {$\scriptstyle \delta_l$};
 \node at (-1,2) [left] {$\scriptstyle \mu$};
 \node at (-1,2.7) [left] {$\scriptstyle \lambda$};
 \node at (-1,3) [colork,above] {$\scriptstyle \epsilon_A$};
 \fill [colorM] (0,0.5) circle (0.12);
 \fill [colorM] (1,0.5) circle (0.12);
 \fill [colorM] (2,0.5) circle (0.12);
 \fill (-1,2) circle (0.1);
 \fill [colork] (-1,2.5) circle (0.12);
 \fill [colork] (-1,3) circle (0.12);
 \node at (3,0) [colork,below] {$\scriptstyle A$};
 \node at (0,0) [colorM,below] {$\scriptstyle C$};
 \node at (1,0) [colorM,below] {$\scriptstyle C$}; 
 \node at (2,0) [colorM,below] {$\scriptstyle C$};
\end{tikzpicture}
\begin{tikzpicture}[xscale=0.65,yscale=0.6]
 \node at (-5,0) {};
 \node at (-1,1.5)  {$\pi^{\epsilon_B} \, \longleftrightarrow$};
 \draw[colori, thick,rounded corners] (0,0) --  (0,0.5) --  (4,2.5) --  (4,3);
 \draw[colorM, thick] (1,0) -- (1,3.5);
 \draw[colorM, thick] (2,0) -- (2,3.5);
 \draw[colorM, thick] (3,0) -- (3,3.5);
 \draw (1,0.5) -- (4,2) -- (4,2.5);
 \draw (2,0.5) -- (4,2);
 \draw (3,0.5) -- (4,2);
 \node at (1,0.5) [left] {$\scriptstyle \delta_r$};
 \node at (2,0.5) [left] {$\scriptstyle \delta_r$};
 \node at (3,0.5) [left] {$\scriptstyle \delta_r$};
 \node at (4,2) [right] {$\scriptstyle \mu$};
 \node at (4,2.7) [right] {$\scriptstyle \rho$};
 \node at (4,3) [colori,above] {$\scriptstyle \epsilon_B$};
 \fill [colorM] (1,0.5) circle (0.12);
 \fill [colorM] (2,0.5) circle (0.12);
 \fill [colorM] (3,0.5) circle (0.12);
 \fill (4,2) circle (0.1);
 \fill [colori] (4,2.5) circle (0.12);
 \fill [colori] (4,3) circle (0.12);
 \node at (0,0) [colori,below] {$\scriptstyle B$};
 \node at (3,0) [colorM,below] {$\scriptstyle C$};
 \node at (1,0) [colorM,below] {$\scriptstyle C$}; 
 \node at (2,0) [colorM,below] {$\scriptstyle C$};
\end{tikzpicture}
\caption{${_B}\! \ModCat_A$ structure on $C^{3}$}\label{pic:SmashBimodStructure}
\end{figure}

\begin{proof} 
Observe that for Point~1 of Proposition~\ref{thm:adjoint_multi_coeffs} to hold true, the additivity of~$\C$ is not necessary, and the module~$M$ can be taken in $\ModCat_{(\oV,\osigma)[t,r]}$ instead of $\ModCat_{(\oV,\osigma)}$. Thus apply Proposition~\ref{thm:adjoint_multi_coeffs} and its mirror version to the braided system of UAAs $(B,C,A)$ from Proposition~\ref{thm:smash} and to the algebra characters (hence braided characters) $\epsilon_A$ and~$\epsilon_B$. One gets right $(A,\sigma_{Ass}(A))$-module structures and left $(B,\sigma_{Ass}(B))$-module structures on all the $B^k \otimes C^{n} \otimes A^{m}$, and hence on~$C^{n}$. Further, since the~$\xi_{1,2}$ and~$\xi_{2,3}$ components of the braiding on $(B,C,A)$ are natural w.r.t. the units of~$A$ and~$B$, these units act on~$C^{n}$ trivially. Theorem~\ref{thm:UAA} then ensures that our braided module structures on~$C^{n}$ are actually module structures over the UAAs~$A$ and~$B$, which are easily checked to coincide with the desired ones. Compatibility between $A$- and $B$-actions follows from Lemma~\ref{thm:adjointBiModules}.
\end{proof}

\section{A braided interpretation of bialgebras and Hopf modules}\label{sec:bialg}

This section explores a rank~$2$ braided system~$\BBialg(H)$ encoding the \emph{bialgebra} structure on~$H$, in the same sense that $\sigma_{Ass}$ encodes the UAA structure (Table~\ref{tab:BrVsStr}). It is a particular case of the system constructed for crossed products in Proposition~\ref{thm:smash}. In~$\BBialg(H)$, the invertibility of the braiding component $\sigma_{1,2}$ is algebraically significant: it is equivalent to the existence of an \emph{antipode}. We identify braided $\BBialg(H)$-modules as \emph{Hopf modules} over~$H$, and show that the braided homology theory for~$\BBialg(H)$ includes \emph{Gerstenhaber--Schack bialgebra homology} and \emph{Panaite--{\c{S}}tefan Hopf module homology}.

Except for some general observations, we specialize here to the category $\C = \vect$ of finite-dimensional vector spaces over~$\k$. One could also work in a braided category~$\C$ and choose a bialgebra in~$\C$ admitting a dual. When working in~$\vect$, we use Sweedler's notation for comultiplications and coactions. A simplified notation
$ v_1 v_2 \ldots  v_n = v_1\otimes v_2\otimes\ldots\otimes v_n \in V^{n}$ 
is preferred for pure tensors in~$V^n$, leaving the symbol~$\otimes$ for 
$ v_1 v_2 \ldots  v_n \otimes w_1 w_2 \ldots w_m \in V^{ n}\otimes W^{ m}$.
The dual space of $V \in \vect$ is denoted by~$V^*$.  Letters~$h_i$ and~$l_j$ stand for elements of~$V$ and~$V^*$ respectively. The pairing $\left\langle,\right\rangle$ is the \emph{evaluation map} $ev\colon V^*\otimes V \to \k$, $l\otimes h \mapsto l(h)$. Multiplications on different spaces are denoted by~$\cdot$ when no confusion arises.

Consider a pairing $B\colon V\otimes W \to \k$ between $\k$-vector spaces. Table~\ref{tab:RainbowArched} presents its possible extensions to $B\colon V^{ n}\otimes W^{ n} \to \k$. The ``arched'' one is more common, but we use the ``rainbow'' one (like, for instance, Gurevich~\cite{Gurevich}), minimizing argument permutations and crossings in diagrams. This choice slightly changes some classical formulas. We use analogous conventions in the dual and multi-pairing situations. Taking as~$B$ the evaluation map~$ev$, one constructs out of a linear map $f \colon V_1\otimes \ldots \otimes V_n \to W_1\otimes \ldots \otimes W_m$ its \textit{dual} $f^*\colon W_m^*\otimes \ldots \otimes W_1^* \to V_n^*\otimes \ldots \otimes V_1^*$ (note the inverse order of factors). Graphically, this is the {central symmetry}, while the arched duality corresponds to the {horizontal mirror symmetry}. 
\begin{table}\centering
\begin{tabular}{|c|c|}
\hline 
$B(v_n\otimes w_1)\cdots B(v_1\otimes w_n)$&$B(v_1\otimes w_1)\cdots B(v_n\otimes w_n)$ \\
\hline
\begin{tikzpicture}[scale=0.5]
\foreach \r in {1,2,3} {
 \draw [rounded corners,dashed] (3-\r,0) -- (3-\r,.4*\r) -- (3,0.8*\r);
 \draw [rounded corners] (3,.8*\r) -- (3+\r,.4*\r) -- (3+\r,0);
 \node at (3+\r,0) [below] {$\scriptstyle W$};
 \node at (3-\r,0) [below] {$\scriptstyle V$};
 \node at (3,.8*\r) [below] {$\scriptstyle B$};
 \fill (3,.8*\r) circle (0.15);
}
\end{tikzpicture} & \begin{tikzpicture}[scale=0.5]
 \foreach \r in {1,2,3} {
 \draw [rounded corners,dashed] (\r,0) -- (\r,1) -- (\r+2,2);
 \draw [rounded corners] (\r+2,2) -- (4+\r,1) -- (4+\r,0);
 \node at (4+\r,0) [below] {$\scriptstyle W$};
 \node at (\r,0) [below] {$\scriptstyle V$};
 \node at (\r+2,2) [above] {$\scriptstyle B$};
 \fill (\r+2,2) circle (0.15);
}
\end{tikzpicture}\\
\hline
\bi{``rainbow''} & \bi{``arched''} \\
\hline
\end{tabular}
\caption{Two definitions of $B(v_1v_2\ldots v_n\otimes w_1w_2\ldots w_n)$}\label{tab:RainbowArched}
\end{table}

For example, the dual of a coalgebra $V \in \vect$ receives an \emph{induced algebra structure} via the rainbow extension of~$ev$: 
$\left\langle l_1l_2,h\right\rangle= \left\langle l_1,h_{(2)}\right\rangle \left\langle l_2,h_{(1)}\right\rangle$, $h \in V,\, l_1,l_2 \in V^*$ (Fig.~\ref{pic:DualViaRainbow}{A}). 
Multiplication and (co)units are dualized similarly. The same structure on~$V^*$ is obtained using the dual \emph{coevaluation map}~$coev$, or the twisted (co)pairings $ev \circ \tau\colon V \otimes V^* \to \k$ and $\tau \circ coev \colon \k\ \to  V \otimes V^*$. Here~$\tau$ is the factor transposition (which is the categorical braiding of~$\vect$). To simplify notations, we often write~$ev$ and~$coev$ even for the twisted maps.

\begin{observation}\label{thm:TwistedBialg}
Take a bialgebra $(H,\mu,\nu, \Delta,\varepsilon)$ in a {braided} category $(\C,\otimes,\II,c)$.
\begin{enumerate}
\item The data $H^{op}:=(H,\mu  c^{-1},\nu, \Delta,\varepsilon)$ and $H^{cop}:=(H,\mu, \nu, c^{-1}  \Delta,\varepsilon)$ define bialgebras in $(\C,\otimes,\II,c^{-1})$. The data  $H^{op,cop}:=(H,\mu  c^{-1},\nu, c\Delta,\varepsilon)$ and $H^{cop,op}:=(H,\mu  c, \nu, c^{-1}  \Delta,\varepsilon)$ define bialgebras in $(\C,\otimes,\II,c)$. 

\item If~$H$ is a Hopf algebra with an antipode~$s$, then so are $H^{op,cop}$ and $H^{cop,op}$, with the same antipode. If~$s$ is invertible, then $s^{-1}$ is an antipode for $H^{op}$ and $H^{cop}$.

\item One has the following bialgebra or Hopf algebra isomorphisms:
\[(H^{op})^* \simeq (H^*)^{cop}, \qquad (H^{cop})^* \simeq (H^*)^{op}, \qquad (H^{op,cop})^* \simeq (H^*)^{cop,op}.\]
\end{enumerate}
\end{observation}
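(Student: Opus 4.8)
The plan is to treat the three statements in order, exploiting throughout that the inverse braiding $c^{-1}$ makes $(\C,\otimes,\II,c^{-1})$ into a braided category as well, and that $(-)^{op}$ and $(-)^{cop}$ are the standard ``opposite'' and ``co-opposite'' operations, each of which interchanges the two braided structures $c\leftrightarrow c^{-1}$ while leaving the underlying object fixed. In particular, applying $op$ or $cop$ once lands in $(\C,c^{-1})$, whereas applying both (in either order) returns to $(\C,c)$; a direct computation with $\mu^{op}=\mu\circ c^{-1}$ and $\Delta^{cop}=c^{-1}\circ\Delta$ confirms that the two composites reproduce exactly the four twisted multiplications and comultiplications listed in Part~1. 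For Part~1 itself I would then verify the bialgebra axioms \eqref{eqn:cat_bialg}--\eqref{eqn:cat_bialg2} for each twisted datum by reducing them, via naturality of $c$ and the YBE, to the corresponding axioms for $H$: associativity of $\mu^{op}$ and coassociativity of $\Delta^{cop}$ follow from the original (co)associativity, the (co)unit conditions are immediate since $\nu$ and $\varepsilon$ are unchanged, and the compatibility \eqref{eqn:cat_bialg} transforms into itself once $c$ is replaced by $c^{-1}$ in the ambient braiding, which is precisely why the single twists are bialgebras in $(\C,c^{-1})$ and the double twists in $(\C,c)$. No idea beyond careful application of naturality is needed here.

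Part~2 splits into two cases. For $H^{op,cop}$ and $H^{cop,op}$ I would check the antipode axiom \eqref{eqn:s} for the unchanged $s$ by a one-line naturality argument; for $H^{op,cop}$, using $c^{-1}\circ(s\otimes\Id_H)=(\Id_H\otimes s)\circ c^{-1}$, one has
\[
(\mu\circ c^{-1})\circ(s\otimes\Id_H)\circ(c\circ\Delta)=\mu\circ(\Id_H\otimes s)\circ c^{-1}\circ c\circ\Delta=\mu\circ(\Id_H\otimes s)\circ\Delta=\nu\circ\varepsilon,
\]
and symmetrically for the other convolution leg and for $H^{cop,op}$. For $H^{op}$ and $H^{cop}$ the claim is the classical fact that passing to the opposite or co-opposite Hopf algebra replaces the antipode by its inverse. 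I would obtain it economically by noting $H^{op}=(H^{op,cop})^{cop}$, so that both cases reduce to the single statement that $(-)^{cop}$ inverts the antipode; this last statement, where the only genuine computation lives, follows from the braided anti-comultiplicativity of $s$ together with the uniqueness of two-sided convolution inverses.

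Finally, for Part~3 I would realize each isomorphism as the identity on the underlying dual object $H^{*}$ (using $H^{**}\cong H$ in the finite-dimensional setting), reducing the content to a matching of structure maps. Dualization swaps multiplication and comultiplication and, under the rainbow convention of Table~\ref{tab:RainbowArched}, reverses the order of tensor factors, so that e.g. the comultiplication of $(H^{op})^{*}$ is $(\mu\circ c^{-1})^{*}=(c^{-1})^{*}\circ\mu^{*}$, to be compared with the $cop$-twisted comultiplication of $H^{*}$. The one thing to pin down is how $(-)^{*}$ interacts with the braiding, namely that $(c^{\pm 1})^{*}$ equals the braiding $c^{\mp 1}$ on the dual objects; granting this, all three identifications fall out by direct comparison of the formulas. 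The main obstacle is therefore the bookkeeping of this duality-versus-braiding compatibility, which however collapses completely in the symmetric category $\vect$ of the present section, where $c=c^{-1}=\tau$ and $\tau^{*}=\tau$, so that the three isomorphisms become literal equalities of structure maps.
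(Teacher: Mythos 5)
The paper contains no proof of this statement to compare against: it is labelled an Observation and introduced as ``a classical general observation concerning twisted bialgebra structures'', i.e.\ it is quoted from the braided-Hopf-algebra literature rather than proved. Judged on its own, your proof is correct, and it is essentially the standard argument. The structural ideas are the right ones: $op$ and $cop$ each exchange the ambient braidings $c \leftrightarrow c^{-1}$, so the single twists are bialgebras in $(\C,\otimes,\II,c^{-1})$ while the double twists, being composites of single twists, land back in $(\C,\otimes,\II,c)$; the naturality cancellation $c^{-1}\circ(s\otimes\Id_H)=(\Id_H\otimes s)\circ c^{-1}$ disposes of the antipode claim for $H^{op,cop}$ and $H^{cop,op}$; and the identity $H^{op}=(H^{op,cop})^{cop}$ cleanly reduces both $s^{-1}$ claims to the single statement that $(-)^{cop}$ inverts the antipode.

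Two steps are left compressed, though neither is wrong. For ``$(-)^{cop}$ inverts the antipode'' you invoke braided anti-comultiplicativity of $s$ plus uniqueness of convolution inverses; note that anti-comultiplicativity, $\Delta\circ s = c\circ(s\otimes s)\circ\Delta$, already finishes the job directly, since it yields $c^{-1}\circ\Delta = (s\otimes s)\circ\Delta\circ s^{-1}$ and hence $\mu\circ(s^{-1}\otimes\Id_H)\circ c^{-1}\circ\Delta = \mu\circ(\Id_H\otimes s)\circ\Delta\circ s^{-1} = \nu\circ\varepsilon\circ s^{-1} = \nu\circ\varepsilon$; but that anti-comultiplicativity is itself a nontrivial classical fact (Majid) which your argument imports, and this should be stated explicitly. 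In Part~3, the interaction of dualization with the braiding in a general rigid braided category is indeed the delicate point; your remark that everything collapses in the paper's actual setting --- $\vect$ with the rainbow duality, where $c=\tau=c^{-1}$ and $\tau^{*}=\tau$, so the three isomorphisms become equalities of structure maps --- is exactly the level of generality the paper's applications require (e.g.\ the computation of $\sigma_{2,4}$ via $H^{op,cop}$ in Theorem~\ref{thm:HBiMod}).
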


\begin{notation}\label{not:TwistedCoMult}
Twisted (co)multiplication is denoted by $\mu^{op}=\mu  c^{-1}$, $\Delta^{cop}=c^{-1}\Delta$.
\end{notation}

Depending on the context, notations $H$, $H^*$, $H^{op}$, etc. will denote the corresponding bialgebra, Hopf algebra, (co)algebra, or vector space. 

Lemma~\ref{thm:lMod=rMod} allows one to switch between {left} $V$-modules and {right} $V^{op}$-modules. We now give an analogous transition tool for modules and {co}modules.

\begin{lemma}\label{thm:Mod=Comod}
For a coalgebra~$V$ in~$\vect$, the following functors (completed by identities on morphisms) yield a category equivalence:
\begin{align}
\ModCat^V &\overset{\sim}{\longleftrightarrow} \ModCat_{V^*}, \notag\\
(M,\delta) & \longmapsto (M,\delta^{co}:= (\Id_M \otimes ev)  (\delta \otimes \Id_{V^*})),\label{eqn:HtoH*}\\
(M,\rho^{co}:= (\rho  \otimes \Id_{V})   (\Id_M \otimes coev)) &\longmapsfrom (M,\rho).\label{eqn:H*toH}
\end{align}
\end{lemma}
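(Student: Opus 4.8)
The plan is to use that $\vect$ is rigid, so that $V$ and $V^*$ are mutually dual objects. Writing $ev : V \otimes V^* \rightarrow \k$ for the (twisted) evaluation and $coev : \k \rightarrow V^* \otimes V$ for the coevaluation, these satisfy the snake identities $(ev \otimes \Id_V) \circ (\Id_V \otimes coev) = \Id_V$ and $(\Id_{V^*} \otimes ev) \circ (coev \otimes \Id_{V^*}) = \Id_{V^*}$. By our rainbow conventions, the induced algebra structure $(\mu_{V^*},\nu_{V^*})$ on $V^*$ is precisely the one obtained by bending the legs of the coalgebra structure $(\Delta,\varepsilon)$ of $V$ across $ev$ and $coev$ (that is, $\mu_{V^*} = \Delta^*$, $\nu_{V^*} = \varepsilon^*$); this is the only place where the coalgebra axioms of $V$ will enter.

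First I would check, \emph{without using any structure}, that the assignments $\delta \mapsto \delta^{co}$ and $\rho \mapsto \rho^{co}$ are mutually inverse bijections between $\Hom_\vect(M, M \otimes V)$ and $\Hom_\vect(M \otimes V^*, M)$. Indeed, substituting the definition of $\delta^{co}$ into that of $(\delta^{co})^{co}$ and sliding $\delta$ past $coev$ (they act on disjoint tensor factors) gives $(\delta^{co})^{co} = (\Id_M \otimes ((ev \otimes \Id_V)\circ(\Id_V \otimes coev))) \circ \delta$, which collapses to $\delta$ by the first snake identity; the identity $(\rho^{co})^{co} = \rho$ follows symmetrically from the second. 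Thus the two functors invert each other on objects, and since both act as the identity on morphisms, it only remains to verify that they exchange the structure axioms.

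The core of the proof is this translation of axioms, which I would carry out in Sweedler notation $\delta(m) = m_{(0)} \otimes m_{(1)}$, so that the resulting action is $m \cdot l = \langle l, m_{(1)}\rangle\, m_{(0)}$. Counitality of $\delta$, namely $\langle \varepsilon, m_{(1)}\rangle m_{(0)} = m$, is exactly the statement that the unit $\nu_{V^*}=\varepsilon$ of $V^*$ acts trivially. For associativity one computes $(m \cdot l_1)\cdot l_2 = \langle l_1, m_{(2)}\rangle\langle l_2, m_{(1)}\rangle\, m_{(0)}$ using comodule coassociativity, and $m \cdot (l_1 l_2) = \langle l_1 l_2, m_{(1)}\rangle m_{(0)} = \langle l_1, m_{(2)}\rangle \langle l_2, m_{(1)}\rangle m_{(0)}$ using the rainbow formula $\langle l_1 l_2, h\rangle = \langle l_1, h_{(2)}\rangle\langle l_2, h_{(1)}\rangle$ for $\mu_{V^*}$; the two sides coincide, and conversely coassociativity is recovered by pairing with arbitrary $l_1,l_2$. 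The same bending argument shows that $f : M \to M'$ intertwines $\delta,\delta'$ if and only if it intertwines the associated actions, so morphisms match up as well.

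I expect the only delicate point to be bookkeeping rather than mathematics: one must check that the rainbow convention for the dual multiplication on $V^*$ is exactly the one making associativity of $m \cdot l$ hold \emph{without} any stray applications of the transposition $\tau$. Had the arched convention been used instead, the matching computation would pair $l_1,l_2$ with $m_{(1)},m_{(2)}$ in the opposite order and force extra crossings; the content of the lemma is that the rainbow choice of Table~\ref{tab:RainbowArched} is precisely the one for which comodule coassociativity and dual-algebra associativity are literally the same equation. Everything else reduces to the snake identities and the finite-dimensionality of $V$.
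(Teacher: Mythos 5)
Your proposal is correct and follows essentially the route the paper intends: the paper declares the verification routine and ``best done graphically'' via the rainbow duality (Fig.~\ref{pic:DualViaRainbow}\circled{B}), and your snake-identity bijection plus the Sweedler-notation translation of (co)associativity and (co)unitality is exactly that verification written out algebraically. Your closing observation about the arched convention forcing $\ModCat_{(V^*)^{op}}$ even matches the remark the paper makes right after its proof.
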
 

The proof is routine and is best done graphically. A diagrammatic version of the transformation~\eqref{eqn:HtoH*} is given in Fig.~\ref{pic:DualViaRainbow}{B}. With the arched dualities, one would have to take the category $\ModCat_{(V^*)^{op}}$ on the right.

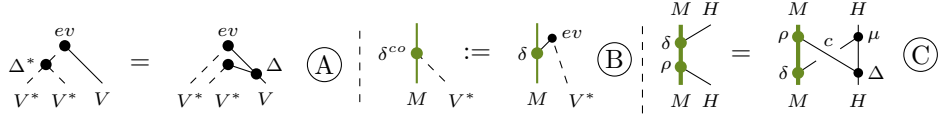
\begin{figure}\centering
\begin{tikzpicture}[scale=0.5]
 \draw [dashed] (0,0) -- (1,1);
 \draw [dashed] (1,0) -- (0.5,0.5);
 \draw (1,1) -- (2,0);
 \node at (0,0) [below] {$\scriptstyle V^*$};
 \node at (1,0) [below] {$\scriptstyle V^*$};
 \node at (2,0) [below] {$\scriptstyle V$};
 \node at (1,1) [above] {$\scriptstyle ev$};
 \fill (1,1) circle (0.15);
 \fill (0.5,0.5) circle (0.15);
 \node at (0.5,0.5) [left] {$\scriptstyle \Delta^*$};
 \node at (3,0.5) {$=$};
\end{tikzpicture}
\begin{tikzpicture}[scale=0.5]
 \draw [dashed] (0,0) -- (1,1);
 \draw [dashed] (0.5,0) -- (1,0.5);
 \draw (1,1) -- (2,0);
 \draw (1,0.5) -- (1.75,0.25);
 \node at (0,0) [below] {$\scriptstyle V^*$};
 \node at (1,0) [below] {$\scriptstyle V^*$};
 \node at (2,0) [below] {$\scriptstyle V$};
 \node at (1,1) [above] {$\scriptstyle ev$};
 \fill (1,1) circle (0.15);
 \fill (1,0.5) circle (0.15);
 \fill (1.75,0.25) circle (0.15);
 \node at (1.75,0.5) [right] {$\scriptstyle \Delta$};
 \node at (3.5,0.5) {\circled{A}};
 \draw [dashed] (4.5,-0.5) -- (4.5,1.5);
\end{tikzpicture}
\begin{tikzpicture}[scale=0.4]
 \draw [colorM, thick] (1,4) -- (1,2);
 \draw [dashed] (1,3) -- (2,2);
 \node at (1,2) [below] {$\scriptstyle M$}; 
 \node at (2.5,2) [below] {$\scriptstyle V^*$};
 \node at (1,3) [left] {$\scriptstyle \delta^{co}$};
 \fill [colorM] (1,3) circle (0.2);
\node  at (3,3){$:=$};
\end{tikzpicture}
\begin{tikzpicture}[scale=0.4]
 \draw [colorM, thick] (1,4) -- (1,2);
 \draw (1,3) -- (1.5,3.5);
 \draw [dashed] (2,2) -- (1.5,3.5);
 \fill (1.5,3.5) circle (0.15);
 \fill [colorM] (1,3) circle (0.2);
 \node at (1.5,3.2) [above right]{$\scriptstyle ev$};
 \node at (1,3) [left]{$\scriptstyle \delta$};
 \node at (1,2) [below] {$\scriptstyle M$}; 
 \node at (2.5,2) [below] {$\scriptstyle V^*$};
 \node at (3.5,2.8) {\circled{B}};
 \draw [dashed] (4.5,1) -- (4.5,4); 
\end{tikzpicture}
\begin{tikzpicture}[xscale=0.8, yscale = 0.8]
 \draw [colorM, ultra thick] (0.5,0) --  (0.5,1);
 \draw (1,0) -- (0.5,0.3);
 \draw (0.5,0.7) -- (1,1);
 \node at (0.5,0.3) [left] {$\scriptstyle \rho$};
 \node at (0.5,0.7) [left] {$\scriptstyle \delta$};
 \fill [colorM] (0.5,0.3) circle (0.1);
 \fill [colorM] (0.5,0.7) circle (0.1);
 \node at (1.5,0.5) {$=$};
 \node at (0.5,0) [below] {$\scriptstyle M$}; 
 \node at (1,0) [below] {$\scriptstyle H$};
 \node at (0.5,1) [above] {$\scriptstyle M$}; 
 \node at (1,1) [above] {$\scriptstyle H$};
\end{tikzpicture}
\begin{tikzpicture}[xscale=0.8, yscale = 0.8] 
 \draw[colorM, ultra thick] (3,0) -- (3,1);
 \draw (4,1) -- (4,0);
 \draw (3,0.2) -- (3.3,0.4); 
 \draw (3.7,0.6) -- (4,0.8);
 \draw (3,0.8) -- (4,0.2);
 \fill [colorM] (3,0.2) circle (0.1);
 \fill [colorM] (3,0.8) circle (0.1);
 \fill (4,0.8) circle (0.08);
 \fill (4,0.2) circle (0.08);
 \node at (3,0.8) [left] {$\scriptstyle \rho$};
 \node at (3,0.2) [left] {$\scriptstyle \delta$};
 \node at (4,0.8) [right] {$\scriptstyle \mu$};
 \node at (4,0.2) [right] {$\scriptstyle \Delta$};
 \node at (3.5,0.5) [above] {$\scriptstyle c$};
 \node at (5,0.5) {\circled{C}}; 
 \node at (3,0) [below] {$\scriptstyle M$}; 
 \node at (4,0) [below] {$\scriptstyle H$};
 \node at (3,1) [above] {$\scriptstyle M$}; 
 \node at (4,1) [above] {$\scriptstyle H$};
\end{tikzpicture}
\caption{Multiplication-comultiplication and action-coaction dualities, and Hopf compatibility} \label{pic:DualViaRainbow}
\end{figure}

\begin{convention}
Here and below thin lines stand for the basic vector space, dashed lines for its dual, and thick colored lines for different types of modules over it.
\end{convention}

\begin{lemma}\label{thm:ModAlg=ComodAlg}
For a bialgebra~$H$ in~$\vect$, the functors from Lemmas~\ref{thm:lMod=rMod} and~\ref{thm:Mod=Comod} induce category equivalences 
\begin{align*}
\ModAlg^H &\overset{\sim}{\longleftrightarrow} \ModAlg_{(H^*)^{cop}}, &{_{H}}\!\ModAlg &\overset{\sim}{\longleftrightarrow}  \ModAlg_{H^{op}}.
\end{align*}
\end{lemma}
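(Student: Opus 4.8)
The plan is to use the fact that a (co)module algebra over $H$ is exactly a UAA object $(M,\mu_M,\nu_M)$ in the monoidal category of (co)modules over $H$, so that both asserted equivalences amount to upgrading the purely module-level equivalences of Lemmas~\ref{thm:lMod=rMod} and~\ref{thm:Mod=Comod} to the level of algebra objects. Since the functors in those lemmas leave $(\mu_M,\nu_M)$ untouched and only transport the action or coaction, the sole thing to verify is that the compatibility conditions \eqref{eqn:ModAlg}--\eqref{eqn:ModAlg'} (together with their right-handed and co-versions) are matched up; equivalently, that $\mu_M$ and $\nu_M$ are morphisms of (co)modules on one side precisely when they are on the other. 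I would carry both verifications out in $\vect$ with Sweedler notation, guided by the central-symmetry picture of duality recorded after Fig.~\ref{pic:DualViaRainbow}, where each identity collapses to a one-line comparison.

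For ${_H}\!\ModAlg \simeq \ModAlg_{H^{op}}$ I would start from Lemma~\ref{thm:lMod=rMod} with $V=H$, sending a right $H^{op}$-action to the left $H$-action $\lambda=\rho\circ c_{M,H}^{-1}$ and back via~\eqref{eqn:rtol}--\eqref{eqn:ltor}. By Observation~\ref{thm:TwistedBialg}, $H^{op}$ carries the \emph{same} comultiplication $\Delta$, so the diagonal action on a tensor product of modules is governed by the same $\Delta$ on both sides. In $\vect$ the braiding is symmetric, so passing from left to right simply replaces $h\cdot(mm')=(h_{(1)}\cdot m)(h_{(2)}\cdot m')$ by $(mm')\cdot h=(m\cdot h_{(1)})(m'\cdot h_{(2)})$, and the order reversal hidden in $\mu^{op}=\mu\circ c^{-1}$ is exactly what keeps this a \emph{right} module-algebra axiom over $H^{op}$. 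The unit condition~\eqref{eqn:ModAlg'} transfers at once because $c$ is natural with respect to $\nu_M$ and $\varepsilon$.

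For $\ModAlg^H \simeq \ModAlg_{(H^*)^{cop}}$ I would feed the comodule-algebra data into Lemma~\ref{thm:Mod=Comod} with $V=H$, so the coaction $\delta$ becomes the $H^*$-action $\delta^{co}$ of~\eqref{eqn:HtoH*}. The comodule-algebra axiom says $\delta$ is an algebra map into $M\otimes H$, hence it involves the multiplication $\mu_H$; dualizing $\mu_H$ through the rainbow pairing turns it into the comultiplication of $H^*$, and this is where the subtlety sits. Because the rainbow convention reverses the order of its arguments (contrast $\langle l_1l_2,h\rangle=\langle l_1,h_{(2)}\rangle\langle l_2,h_{(1)}\rangle$ with the arched pattern in Table~\ref{tab:RainbowArched}), the comultiplication one reads off is the \emph{co-opposite} one, so that $(mm')\cdot l=(m\cdot l_{(2)})(m'\cdot l_{(1)})$; this is precisely the right module-algebra axiom over $(H^*)^{cop}$, the $cop$ being forced by the rainbow reversal. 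The converse implication runs the inverse functor~\eqref{eqn:H*toH} with the dual coevaluation, and the units match via $\varepsilon_{H^*}=\nu_H^*$.

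The main obstacle is exactly this twist bookkeeping: keeping straight, in the comodule case, which of $\Delta_{H^*}$, its co-opposite, and the rainbow pattern combine to produce the $cop$, and, in the module case, that the left-to-right passage introduces an $op$ on the multiplication without secretly touching the coalgebra structure. I would therefore pin down once and for all that the $op$ comes from reversing the \emph{action side} (so $\Delta$ is untouched) while the $cop$ comes from reversing the \emph{rainbow pairing} applied to $\mu_H$ (so only the comultiplication is co-opped); with these two observations isolated, every remaining step is a routine Sweedler-level or diagrammatic identity, and the same diagrams prove the analogue over a general braided $\C$ admitting the relevant dual.
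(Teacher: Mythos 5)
Your proposal is correct, and it follows exactly the route the paper intends: the paper states this lemma without a separate proof, presenting it as the routine lifting of the functors of Lemmas~\ref{thm:lMod=rMod} and~\ref{thm:Mod=Comod} to algebra objects, which is precisely what you carry out. Your bookkeeping of the twists is accurate --- the $op$ arises from making the transposed action associative (leaving $\Delta$ untouched), while the $cop$ arises because the rainbow pairing dualizes $\mu_H$ with reversed arguments, yielding $(mm')\cdot l=(m\cdot l_{(2)})(m'\cdot l_{(1)})$ --- so the verification matches the paper's conventions.
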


We now include the groupoid $^*\!\Bialg(\vect)$ of bialgebras and bialgebra isomorphisms in~$\vect$ into the groupoid of bipointed rank~$2$ braided systems in~$\vect$, taking inspiration from the pointed rank~$1$ system interpretation of UAAs (Theorem~\ref{thm:UAA}). 

\begin{definition}\label{def:Norm2}
Given a monoidal category~$\C$, let $^*\!\BrSystBP_r (\C)$ be the category of
\begin{itemize}
\item rank~$r$ \emph{bipointed braided systems}, i.e., $(\oV,\osigma) \in \BrSyst_r(\C)$ enriched with distinguished morphisms $\onu= (\nu_i \colon \II \to V_i)_{1 \le i \le r}$ and  $\overline{\varepsilon}=(\varepsilon_i \colon V_i \to \II )_{1 \le i \le r}$, called \emph{units} and \emph{counits}, forming normalized  pairs $(\nu_i,\varepsilon_i)$ for all~$i$, and
\item isomorphisms from $\BrSyst_r(\C)$ preserving the units and the counits.
\end{itemize}
\end{definition}

\begin{definition}\label{def:cat_Hopf}
A \emph{(right-right) Hopf module} over a bialgebra~$H$ in a braided category~$\C$ is an object~$M$ endowed with right module and comodule structures $\rho \colon M \otimes H \to M$, $\delta \colon M \to M  \otimes H$, satisfying the \emph{Hopf compatibility condition} (Fig.~\ref{pic:DualViaRainbow}{C}):
\begin{equation}\label{eqn:rrHopfMod}
\delta  \rho = (\rho \otimes \mu) (\Id_M \otimes c_{H,H} \otimes \Id_H)  (\delta \otimes \Delta) \quad : \quad M \otimes H \rightarrow  M \otimes H.
\end{equation}
The category of such modules and their morphisms is denoted by $\ModCat_H^H$. 
\end{definition}

An important example of $H$-Hopf module is $H$ itself, with $\rho=\mu_H$, $\delta= \Delta_H$.

We now return to our category $\vect$, omitted in further notations.

\begin{theorem}\label{thm:Bialg}
\begin{enumerate}
\item\label{item:CatInclBi}
One has a fully faithful functor
\begin{align}
\F :\; {^*}\!\Bialg \longhookrightarrow&  ^*\!\BrSystBP_2 \label{eqn:BialgAsBrSyst}\\
(H,\mu,\nu,\Delta,\varepsilon) \longmapsto& \BBialg(H):=(V_1:=H,V_2:=H^*; \notag\\
& \quad \sigma_{1,1}:=\sigma_{Ass}^r(H),\sigma_{2,2}:=\sigma_{Ass}(H^*),\sigma_{1,2}=\sigma_{bi};\, \nu,\varepsilon^*; \varepsilon,\nu^*),\notag\\
f \longmapsto& (f,(f^{-1})^*),\notag
\end{align}

where $\sigma_{bi}(h \otimes l)= \left\langle l_{(1)},h_{(2)}\right\rangle l_{(2)} \otimes h_{(1)}$ (Fig.~\ref{pic:sigmaHH*}).

\item\label{item:non_invBi} For a bialgebra~$H$, $\sigma_{bi}$ is invertible if and only if~$H$ has an antipode.

\item\label{item:EqnEquivBi} Take an $H \in \vect$ with UAA and coUAA structures $(\mu,\nu)$ and $(\Delta,\varepsilon)$. Suppose the pair $(\nu,\varepsilon)$ normalized. Then the YBE on $H\otimes H \otimes H^*$ (symmetrically, on $H\otimes H^* \otimes H^*$) for~$\BBialg(H)$, together with the naturality of~$\sigma_{bi}$ with respect to the units, are equivalent to the bialgebra compatibility conditions~\eqref{eqn:cat_bialg} for~$H$. 

\item\label{item:ModEquivBi} For a bialgebra~$H$, one has category equivalences 

$\qquad\qquad\qquad$ {\renewcommand{\arraystretch}{1.2}
\begin{tabular}{ccccc} 
$\ModCat_H^H$ & $\stackrel{\sim}{\longrightarrow}$ & $\ModCatN_{\BBialg(H)}$ & $\stackrel{\sim}{\longrightarrow}$ & $\ModCat_{H^* \underset{\sigma_{bi}}{\otimes} H}$\\ 
$(M,\rho, \delta)$ & $\longmapsto$ & $(M;\rho, \delta^{co})$ & $\longmapsto$ & $(M, \delta^{co} \otimes  \rho)$
\end{tabular}}

If~$H$ is a Hopf algebra with an antipode~$s$, then this chain can be continued on the left by $\,\ModCat_{H \underset{\theta}{\otimes} H^*} \simeq \ModCatN_{s_1 \cdot \BBialg(H)} \simeq  \ModCat^H_H$, where $\theta=\sigma_{bi}^{-1}$.
\end{enumerate}
\end{theorem}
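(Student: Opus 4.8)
The plan is to isolate the one genuinely new computation --- Part~\ref{item:EqnEquivBi} --- and to deduce everything else from the general machinery of Sections~\ref{sec:BrSyst} and~\ref{sec:SystemsOfUAAs}. I would first observe that the diagonal components $\sigma_{1,1}=\sigma_{Ass}^r(H)$ and $\sigma_{2,2}=\sigma_{Ass}(H^*)$ are associativity braidings, so by Theorem~\ref{thm:UAA}.\ref{item:EqnEquiv} (and Remark~\ref{rmk:pre_mirror}) they automatically satisfy the YBE on $H\otimes H\otimes H$ and $H^*\otimes H^*\otimes H^*$ and encode the UAA structures on $H$ and on its dual $H^*$; the counit axiom $\varepsilon\circ\nu=\Id_\II$ makes $(\nu,\varepsilon)$ and the dual pair $(\varepsilon^*,\nu^*)$ normalized, so the bipointed structure is in place. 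Thus the only YBEs left to understand are the two \emph{mixed} ones, on $H\otimes H\otimes H^*$ and on $H\otimes H^*\otimes H^*$, which is exactly the content of Part~\ref{item:EqnEquivBi}.

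For Part~\ref{item:EqnEquivBi} I would work in $\vect$ with Sweedler's notation and the rainbow duality of Table~\ref{tab:RainbowArched}. Substituting $\sigma_{1,1}=\mu_H\otimes\nu_H$, $\sigma_{2,2}=\varepsilon^*\otimes\mu_{H^*}$ and $\sigma_{1,2}=\sigma_{bi}$ into~\eqref{eqn:YB} and evaluating both sides on a general element $h\otimes h'\otimes l$, the YBE on $H\otimes H\otimes H^*$ collapses --- after pairing $l$ against arbitrary test vectors and using the normalization to strip the units inserted by $\sigma_{Ass}^r$ --- to precisely the multiplicativity $\Delta\circ\mu=(\mu\otimes\mu)\circ c^2\circ(\Delta\otimes\Delta)$ together with $\varepsilon\circ\mu=\varepsilon\otimes\varepsilon$; the symmetric YBE on $H\otimes H^*\otimes H^*$ yields the same relations read through duality, while the naturality of $\sigma_{bi}$ with respect to the units extracts the remaining axioms $\Delta\circ\nu=\nu\otimes\nu$ and $\varepsilon\circ\nu=\Id_\II$ of~\eqref{eqn:cat_bialg}--\eqref{eqn:cat_bialg2}. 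A more conceptual route, which I would also mention, is to recognize $\BBialg(H)$ as the rank~$2$ specialization of the crossed-product system of Proposition~\ref{thm:smash} (with $H^*$ as comodule algebra): Theorem~\ref{thm:ProdOfAlgebrasBrFamily} then repackages the two mixed YBEs as the naturality of $\sigma_{bi}$ with respect to $\mu_H$ and $\mu_{H^*}$, which dualize to the bialgebra compatibilities.

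Part~\ref{item:non_invBi} I would prove by exhibiting, when $H$ is a Hopf algebra, an explicit inverse of the form $\sigma_{bi}^{-1}(l\otimes h)=h_{(2)}\otimes\langle l_{(1)},s(h_{(1)})\rangle l_{(2)}$ (obtainable by dualizing the formula for $\xi_{1,2}^{-1}$ in Remark~\ref{rmk:crossed_inv}), and checking $\sigma_{bi}\circ\sigma_{bi}^{-1}=\Id=\sigma_{bi}^{-1}\circ\sigma_{bi}$ by a direct Sweedler computation invoking the antipode axiom~\eqref{eqn:s}. For the converse, given an inverse of $\sigma_{bi}$ I would reconstruct a candidate antipode by contracting $\sigma_{bi}^{-1}$ with the units and counits of $H$ and $H^*$ and the evaluation pairing, then verify~\eqref{eqn:s}; finite-dimensionality of $H$ is used to identify this contraction with a genuine endomorphism and to phrase invertibility via convolution. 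Part~\ref{item:CatInclBi} then follows: well-definedness on objects combines the diagonal YBEs (Theorem~\ref{thm:UAA}) with the mixed ones (Part~\ref{item:EqnEquivBi}); on morphisms, $f\mapsto(f,(f^{-1})^*)$ satisfies the braided-morphism condition~\eqref{eqn:BrMor} for $\sigma_{bi}$ precisely because $f$ respects $\mu$ and $\Delta$, the appearance of $f^{-1}$ (forcing the groupoid setting) being dictated by the contravariance of dualization. Fullness and faithfulness are immediate, since $H=V_1$ and $f$ are recovered from the image.

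Finally, for Part~\ref{item:ModEquivBi} the right-hand equivalence $\ModCatN_{\BBialg(H)}\simeq\ModCat_{H^*\underset{\sigma_{bi}}{\otimes}H}$ is a direct instance of Proposition~\ref{thm:ProdOfAlgebrasBrFamilyMod}. For the left-hand one, by Observation~\ref{rmk:MultiMod} a braided $\BBialg(H)$-module is a right $H$-module $\rho$ together with a right $H^*$-module --- equivalently, via Lemma~\ref{thm:Mod=Comod}, a right $H$-comodule $\delta$ --- subject to the single compatibility~\eqref{eqn:MultiBrMod} for $\sigma_{1,2}=\sigma_{bi}$; unwinding this relation through the duality identifies it with the Hopf compatibility~\eqref{eqn:rrHopfMod}, giving $\ModCat_H^H\simeq\ModCatN_{\BBialg(H)}$. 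The Hopf-algebra continuation then combines Part~\ref{item:non_invBi} (so that $\sigma_{bi}$ is invertible) with the component swap of Proposition~\ref{thm:ProdOfAlgebrasBrFamilyInv}, producing $s_1\cdot\BBialg(H)$ and the algebra $H\underset{\theta}{\otimes}H^*$ with $\theta=\sigma_{bi}^{-1}$, together with the co-version of the preceding identification to land in $\ModCat^H_H$. I expect the main obstacle to be the diagrammatic bookkeeping in Parts~\ref{item:EqnEquivBi} and~\ref{item:ModEquivBi}: matching the \emph{right} associativity braiding on $H$ against the \emph{left} one on $H^*$, and translating~\eqref{eqn:MultiBrMod} into~\eqref{eqn:rrHopfMod} without sign, order, or left/right-duality errors, the rainbow convention of Table~\ref{tab:RainbowArched} being essential to keep the diagrams crossingless.
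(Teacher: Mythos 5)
Your plan follows the paper's own route almost everywhere: Part~\ref{item:CatInclBi} via the crossed-product machinery of Proposition~\ref{thm:smash} and Theorem~\ref{thm:ProdOfAlgebrasBrFamily}, Part~\ref{item:EqnEquivBi} by trading the mixed YBEs for naturality of $\sigma_{bi}$ and dualizing, Part~\ref{item:ModEquivBi} from Observation~\ref{rmk:MultiMod}, Lemma~\ref{thm:Mod=Comod} and Propositions~\ref{thm:ProdOfAlgebrasBrFamilyMod} and~\ref{thm:ProdOfAlgebrasBrFamilyInv}. The genuine gap is in Part~\ref{item:non_invBi}. First, your explicit inverse is wrong: with the paper's conventions $\sigma_{bi}^{-1}(l\otimes h)=\left\langle l_{(1)},s(h_{(2)})\right\rangle h_{(1)}\otimes l_{(2)}$, whereas your candidate $\left\langle l_{(1)},s(h_{(1)})\right\rangle h_{(2)}\otimes l_{(2)}$ composed with $\sigma_{bi}$ gives $\left\langle l_{(1)},s(h_{(1)})h_{(3)}\right\rangle h_{(2)}\otimes l_{(2)}$, and the antipode axiom cannot collapse the non-adjacent Sweedler legs $h_{(1)},h_{(3)}$; this is a repairable slip. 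Not repairable by the argument you sketch is the converse (``$\sigma_{bi}$ invertible $\Rightarrow$ antipode''). You contract $\sigma_{bi}^{-1}$ into a candidate $\widetilde s$ and say you would ``then verify~\eqref{eqn:s}'', but only half of~\eqref{eqn:s} comes out this way: $\sigma_{bi}^{-1}\circ\sigma_{bi}=\Id$ yields $\mu\circ(\widetilde s\otimes\Id_H)\circ\Delta=\nu\circ\varepsilon$, and, as the paper explicitly warns, the second identity does \emph{not} follow from $\sigma_{bi}\circ\sigma_{bi}^{-1}=\Id$ by the symmetric duality manipulation. The paper needs an extra trick: define $\widetilde\sigma$ by the inverse formula with $\widetilde s$ in place of $s$, use the already-established half of~\eqref{eqn:s} to prove $\widetilde\sigma\circ\sigma_{bi}=\Id$, conclude $\widetilde\sigma=\sigma_{bi}^{-1}$ (a one-sided inverse of an invertible map is the inverse), and only then extract the second half from $\sigma_{bi}\circ\widetilde\sigma=\Id$ by applying $\nu^*\otimes\varepsilon$. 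Your passing appeal to convolution does not supply this step, so your verification stalls exactly where the real difficulty lies.

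Two smaller inaccuracies. In Part~\ref{item:CatInclBi}, fullness is not ``immediate'': one must show that an arbitrary bipointed braided system isomorphism $(f,g)$ is forced to have $g=(f^{-1})^*$ --- the paper does this by applying $\nu_K^*\otimes\varepsilon_K$ to the intertwining relation~\eqref{eqn:RespectSigmaBi} and playing with dualities --- and that $f$ is then automatically a coalgebra, hence bialgebra, isomorphism; short, but an argument, not a triviality. In Part~\ref{item:EqnEquivBi} your bookkeeping is off: the YBE on $H\otimes H\otimes H^*$ is equivalent (given unit naturality) to the naturality of $\sigma_{bi}$ with respect to $\mu$, hence to the $\mu$-$\Delta$ compatibility alone; the axiom $\varepsilon\circ\mu=\varepsilon\otimes\varepsilon$ comes instead from the naturality of $\sigma_{bi}$ with respect to the unit $\varepsilon^*$ of $H^*$, and $\varepsilon\circ\nu=\Id_\II$ is the normalization hypothesis, not a consequence. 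These attributions matter as soon as one actually runs the Sweedler computation you outline.
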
 

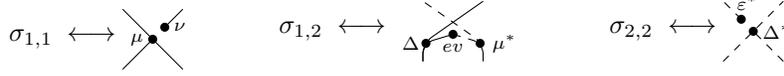
\begin{figure}\centering
\begin{tikzpicture}[scale=0.8]
 \node at (-1,0.5) {$\sigma_{1,1} \; \longleftrightarrow $};
 \draw (0,1) -- (1,0);
 \draw (0,0) -- (0.5,0.5);
 \draw (0.7,0.7) -- (1,1);
 \node at (0.5,0.5) [left]{$\scriptstyle \mu$};
 \node at (0.7,0.7) [right] {$\scriptstyle \nu$}; 
 \fill (0.7,0.7) circle (0.075);
 \fill (0.5,0.5) circle (0.075);
\end{tikzpicture}
\begin{tikzpicture}[scale=0.8]
 \node at (-3.5,0) {};
 \node at (-1.5,0.5) {$\sigma_{1,2} \; \longleftrightarrow $};
 \draw [dashed,rounded corners] (1,0)  -- (1,0.3) -- (0,1);
 \draw [rounded corners] (0,0) -- (0,0.3) -- (1,1);
 \draw [dashed,rounded corners] (0.5,0.45) -- (1,0.3) -- (1,0);
 \draw [rounded corners](0,0) -- (0,0.3) -- (0.5,0.45);
 \node at (0.5,0.45) [below] {$\scriptstyle ev$};
 \fill (0.5,0.45) circle (0.075);
 \fill (0.05,0.3) circle (0.075);
 \fill (0.95,0.3) circle (0.075);
 \node at (0.1,0.3) [left] {$\scriptstyle \Delta$};
 \node at (1,0.3) [right] {$\scriptstyle \mu^*$};
\end{tikzpicture}
\begin{tikzpicture}[scale=0.8]
 \node at (-3,0) {};
 \node at (-1,0.5) {$\sigma_{2,2} \; \longleftrightarrow $};
 \draw [dashed](0,0) -- (1,1);
 \draw [dashed](1,0) -- (0.5,0.5);
 \draw [dashed](0.3,0.7) -- (0,1);
 \node at (0.5,0.5) [right]{$\scriptstyle \Delta^*$};
 \node at (0.4,0.7) [above] {$\scriptstyle \varepsilon^*$}; 
 \fill (0.3,0.7) circle (0.075);
 \fill (0.5,0.5) circle (0.075);
\end{tikzpicture}
\caption{A braiding encoding the bialgebra structure}\label{pic:sigmaHH*}
\end{figure}

The graphical interpretation suggests that, applied to the dual bialgebra~$H^*$ instead of~$H$, the construction yields a vertical mirror version of the system~$\BBialg(H)$.

\begin{proof}
Take a bialgebra~$H$. Recall Notation~\ref{not:TwistedCoMult}. Consider the left $H^*$-comodule algebra $(H^*,\Delta^*,\varepsilon^*, \mu^*)$. (A left version of) Lemma~\ref{thm:ModAlg=ComodAlg} transforms it into a left $H^{cop}$-module algebra $(H^*,\Delta^*,\varepsilon^*,(\mu^*)^{co})$. Together with the $H^{cop}$-bicomodule algebra $(H^{cop},\mu,\nu,\Delta^{cop}, \Delta^{cop})$, it can be fed into Proposition~\ref{thm:smash} as the~$A$ and~$C$ parts (as explained after that proposition, the~$B$ part can be omitted). The~$\xi_{2,3}$ component of the braided system from that proposition coincides with~$\sigma_{bi}$. Further, $H^{cop}$ and~$H$ share the same UAA structure, hence our~$\sigma_{i-1,i-1}$  can be chosen as the~$\xi_{i,i}$ components (Remark~\ref{rmk:pre_mirror}). Proposition~\ref{thm:smash} then implies that~$\BBialg(H)$ is a braided system of UAAs. It is clearly bipointed. Moreover, the braiding on~$\BBialg(H)$, the units and the counits suffice to recover all ingredients of the bialgebra structure on~$H$, hence the functor~$\F$ is injective on objects.

To prove Point~\ref{item:CatInclBi}, it remains to understand, for bialgebras~$H$ and~$K$, isomorphisms of bipointed braided systems $(f,g)\colon \BBialg(H) \to \BBialg(K)$. By definition, they consist of bijections $f \colon H \to K$, $g \colon H^* \to K^*$ intertwining the braidings of~$\BBialg(H)$ and~$\BBialg(K)$ and respecting the (co)units. Due to Theorem \ref{thm:UAA} (Point~\ref{item:functor}), this means that $f$ and~$g$ are UAA isomorphisms compatible with counits ($\varepsilon_K  f = \varepsilon_H$, $\nu_K^*  g = \nu_H^*$), and satisfy
\begin{equation}\label{eqn:RespectSigmaBi}
\sigma_{bi}(K)  (f \otimes g) = (g \otimes f)  \sigma_{bi}(H)
\end{equation}
(Fig.~\ref{pic:SigmaBi}{A}). Applying $\nu_K^* \otimes \varepsilon_K$ to both sides of~\eqref{eqn:RespectSigmaBi}, using the compatibility of~$f$ and~$g$ with the counits, and playing with dualities, one deduces  $g^* f = \Id_H$, hence $g =(f^{-1})^*$. Since~$g$ is a UAA isomorphism, so is~$g^{-1}$, hence $f=(g^{-1})^*$ is a coUAA morphism, which completes its properties and shows that it is a bialgebra isomorphism. Reversing the argument, one checks that the choice $g =(f^{-1})^*$ for a bialgebra isomorphism~$f$ implies~\eqref{eqn:RespectSigmaBi}. Thus the bipointed braided system isomorphisms are precisely the pairs $(f,(f^{-1})^*)$ for bialgebra isomorphisms~$f$. Hence the functor~$\F$ is well defined, full and faithful. This finishes the proof of Point~\ref{item:CatInclBi}.
 
In Point~\ref{item:EqnEquivBi}, the compatibility between~$\Delta$ and~$\nu$ follows by applying $\nu^* \otimes \Id_H$ to the naturality condition for~$\sigma_{bi}$ w.r.t.~$\nu$. Symmetrically, the $\mu$-$\varepsilon$ compatibility follows from the naturality of~$\sigma_{bi}$ w.r.t.~$\varepsilon^*$. The converse (compatibility $\Rightarrow$ naturality) is easy. According to (the proof of) Theorem~\ref{thm:ProdOfAlgebrasBrFamily}, the YBE on $H\otimes H \otimes H^*$ is equivalent to the naturality condition of~$\sigma_{bi}$ w.r.t.~$\mu$ (Fig.~\ref{pic:SigmaBi}{B}), which implies the bialgebra $\mu$-$\Delta$ compatibility (apply $\nu^* \otimes \Id_H$ to both sides and use duality). Conversely, the bialgebra compatibility suffices to deduce the above naturality. By symmetry, one gets a proof for $H \otimes H^* \otimes H^*$. 

The ``if'' part of Point~\ref{item:non_invBi} can be proved by exhibiting an explicit formula for~$\sigma_{bi}^{-1}$:
\begin{equation}\label{eqn:sigma_bi_inv}
\sigma_{bi}^{-1}(l \otimes h) = \left\langle l_{(1)},s(h_{(2)})\right\rangle  h_{(1)} \otimes l_{(2)} 
\end{equation}
(or by using the remarks after Proposition~\ref{thm:smash} and Point~2 of Observation~\ref{thm:TwistedBialg}). The ``only if'' part is more delicate. Suppose the existence of~$\sigma_{bi}^{-1}$ and put
\begin{equation}
\widetilde{s}=(((\varepsilon\otimes\nu^*)\sigma_{bi}^{-1})\otimes\Id_H)  (\Id_{H^*} \otimes c_{H,H}) (coev \otimes \Id_H) \quad : \quad H\to H\notag
\end{equation}
(Fig.~\ref{pic:SigmaBi}). Let us prove that~$\widetilde{s}$ is the antipode. The part
\begin{equation}\label{eqn:ws_left}
\mu  (\widetilde{s}\otimes \Id_H)  \Delta  = \nu  \varepsilon
\end{equation}
of the defining relation~\eqref{eqn:s} follows from $\sigma_{bi}^{-1} \sigma_{bi} = \Id_{H\otimes  H^*}$ by duality manipulations. Surprisingly, the remaining part $\mu  (\Id_H \otimes \widetilde{s})  \Delta  = \nu  \varepsilon$ does not seem to follow from $\sigma_{bi} \sigma_{bi}^{-1} = \Id_{H^*\otimes  H}$. Algebraic tricks come into play instead. Mimicking~\eqref{eqn:sigma_bi_inv}, set 
\[\widetilde{\sigma}=(\Id_H \otimes (ev  (\widetilde{s} \otimes \Id_{H^*})) \otimes \Id_{H^*})  (\Delta \otimes \mu^* )  c_{H^*,H} \quad : \quad H^*\otimes  H \to H\otimes  H^*.\]
Relation~\eqref{eqn:ws_left} implies $\widetilde{\sigma}  \sigma_{bi} = \Id_{H\otimes H^*}$. Then $\widetilde{\sigma}$ coincides with~$\sigma_{bi}^{-1}$, giving $\sigma_{bi} \widetilde{\sigma}= \Id_{H^*\otimes H}$. Applying $\nu^* \otimes \varepsilon$ to both sides, one recovers the second part of~\eqref{eqn:s} for~$\widetilde{s}$.

\begin{figure}\centering
\begin{tikzpicture}[xscale=0.7,yscale=0.6]
 \draw [dashed,rounded corners] (1,-0.25)  -- (1,0.3) -- (0,1) -- (0,1.25);
 \draw [dashed] (1,-0.5)  -- (1,-0.25);
 \draw [rounded corners] (0,-0.25) -- (0,0.3) -- (1,1) -- (1,1.25);
 \draw  (0,-0.5) -- (0,-0.25);
 \draw [dashed,rounded corners] (0.5,0.4) -- (0.7,0.3) -- (1,0);
 \draw [rounded corners](0,0) -- (0.3,0.3) -- (0.5,0.4);
 \fill (0.5,0.4) circle (0.075);
 \fill (0,-0.25) circle (0.075);
 \fill (1,-0.25) circle (0.075);
 \node at (0,-0.25)  [left] {$\scriptstyle f$};
 \node at (1,-0.25)  [right] {$\scriptstyle g$};
 \node at (0,-0.5)  [below] {$\scriptstyle H$};
 \node at (1,-0.5)  [below] {$\scriptstyle H^*$};
 \node at (0,1.25)  [above] {$\scriptstyle K^*$};
 \node at (1,1.25)  [above] {$\scriptstyle K$};
 \node at (2,0.3) {$=$};
\end{tikzpicture}
\begin{tikzpicture}[xscale=0.7,yscale=0.6]
 \draw [dashed,rounded corners] (1,-0.25)  -- (1,0.3) -- (0,1);
 \draw [dashed] (0,1.5)  -- (0,1);
 \draw [rounded corners] (0,-0.25) -- (0,0.3) -- (1,1);
 \draw  (1,1) -- (1,1.5);
 \draw [dashed,rounded corners] (0.5,0.4) -- (0.7,0.3) -- (1,0);
 \draw [rounded corners](0,0) -- (0.3,0.3) -- (0.5,0.4);
 \fill (0.5,0.4) circle (0.075);
 \fill (0,1) circle (0.075);
 \fill (1,1) circle (0.075);
 \node at (0,1)  [left] {$\scriptstyle g$};
 \node at (1,1)  [right] {$\scriptstyle f$};
 \node at (0,-0.25)  [below] {$\scriptstyle H$};
 \node at (1,-0.25)  [below] {$\scriptstyle H^*$};
 \node at (0,1.5)  [above] {$\scriptstyle K^*$};
 \node at (1,1.5)  [above] {$\scriptstyle K$};
 \node at (1.75,0.25) {\circled{A}};
 \draw [dotted] (2.25,-0.75) -- (2.25,2);
\end{tikzpicture}
\begin{tikzpicture}[xscale=0.4,yscale=0.4]
 \draw[rounded corners] (0,0) -- (4,4);
 \draw (2,0) -- (1,1);
 \draw[dashed,rounded corners] (4,0) -- (4,1) -- (2,3) -- (2,4);
 \draw (1.5,1.5) -- (2.5,2);
 \draw[dashed] (3.5,1.5) -- (2.5,2);
 \fill (1,1) circle (0.15);
 \fill (1.5,1.5) circle (0.15);
 \fill (3.5,1.5) circle (0.15);
 \fill (2.5,2) circle (0.15);
 \node  at (5,2){${=}$};
 \node at (5,-0.25) {}; 
\end{tikzpicture}
\begin{tikzpicture}[xscale=0.4,yscale=0.4]
 \draw[rounded corners] (0,0) -- (0,0.5) -- (3.5,4);
 \draw[rounded corners] (2,0) -- (2,0.5) -- (3,3.5);
 \draw[dashed,rounded corners] (4,0) -- (4,0.5) -- (0.5,4);
 \draw (3,1) -- (2,0.5);
 \draw[dashed] (4,0.5) -- (3,1);
 \draw (2,2) -- (1,1.5);
 \draw[dashed] (3,1.5) -- (2,2);
 \fill (3,3.5) circle (0.15);
 \fill (2,0.5) circle (0.15);
 \fill (1,1.5) circle (0.15);
 \fill (3,1.5) circle (0.15);
 \fill (4,0.5) circle (0.15);
 \fill (3,1) circle (0.15);
 \fill (2,2) circle (0.15);
 \node at (5,1.5) {\circled{B}};
 \node at (5,-0.25) {}; 
 \draw [dotted] (6,0) -- (6,4);
\end{tikzpicture}
\begin{tikzpicture}[xscale=.45,yscale=.45]
 \node at (-1.5,2) {$\widetilde{s} \,\longleftrightarrow$};
 \draw [dashed] (1,1)  -- (0,2);
 \draw [dashed] (1,3) -- (1,3.5);
 \draw [rounded corners](2,0) -- (2,1) -- (1,2);
 \draw [rounded corners](1,1) -- (2,2) -- (2,4);
 \draw (0,3) -- (0,3.5);
 \node at (1,1) [below] {$\scriptstyle coev$};
 \fill (1,1) circle (0.07);
 \fill (1,3.5) circle (0.1);
 \node at (1,3.5) [above] {$\scriptstyle \nu^*$};
 \fill (0,3.5) circle (0.1);
 \node at (0,3.5) [above] {$\scriptstyle \varepsilon$};
 \draw (-0.2,3) rectangle (1.2,2);
 \node at (0.5,2.5) {$\sigma_{bi}^{-1}$};
 \node at (3,1.75) {\circled{C}}; 
\end{tikzpicture}
\caption{Naturality and invertibility issues for~$\sigma_{bi}$}\label{pic:SigmaBi}
\end{figure}
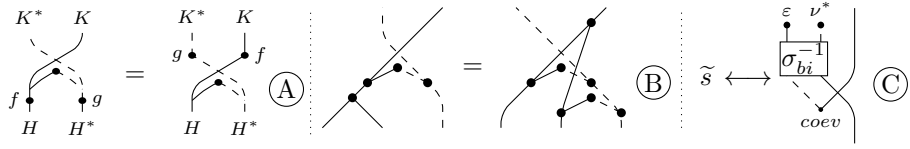

We now move to Point~\ref{item:ModEquivBi}. Equivalence $\ModCatN_{\BBialg(H)} \stackrel{\sim}{\rightarrow} \ModCat_{H^* \underset{\sigma_{bi}}{\otimes} H}$ follows from Proposition~\ref{thm:ProdOfAlgebrasBrFamilyMod}. Further, Observation~\ref{rmk:MultiMod}, combined with Point~\ref{item:ModEquiv} of Theorem~\ref{thm:UAA}, present a right $\BBialg(H)$-module~$M$ via right module structures~$\rho_H$ and~$\rho_{H^*}$ over the UAAs~$H$ and~$H^*$ respectively, compatible in the sense of~\eqref{eqn:MultiBrMod}:
\begin{equation}\label{eqn:CompatForHbiMod}
 \rho_{H^*}  (\rho_H \otimes \Id_{H^*})=\rho_H  (\rho_{H^*} \otimes \Id_{H}) (\Id_M\otimes (\tau  (\Id_H \otimes ev \otimes Id_{H^*})  (\Delta \otimes \mu^*))).
\end{equation}
On the other hand, due to the module-comodule duality from Lemma~\ref{thm:Mod=Comod}, a right-right Hopf module structure over~$H$ can also be viewed as right module structures over the UAAs~$H$ and~$H^*$, with the compatibility condition obtained by applying $\Id_M \otimes ev$ to the defining condition~\eqref{eqn:rrHopfMod} of Hopf modules (tensored with~$\Id_{H^*}$ on the right) and turning $H$-comodule structures into $H^*$-module structures. The condition obtained coincides with~\eqref{eqn:CompatForHbiMod}, implying $\ModCat^H_H \simeq \ModCatN_{\BBialg(H)}$. 

In the Hopf algebra case, Point~\ref{item:non_invBi} gives the invertibility of~$\sigma_{bi}$. The component permuting Proposition~\ref{thm:ProdOfAlgebrasBrFamilyInv} proves then the desired equivalences.
\end{proof}

All the remarks following Theorem~\ref{thm:UAA} remain relevant in the bialgebra case. One particular feature of the bialgebra setting is to be added to that list:

\begin{remark} It is essential to work in the \emph{groupoid}, and not just in the category of bialgebras, if one wants a bialgebra morphism $H\rightarrow G$ to induce a morphism of dual bialgebras $H^*\rightarrow G^*$, so that the functor~\eqref{eqn:BialgAsBrSyst} can be defined on morphisms.
\end{remark}

Denote by $\Hei'(H)=H \underset{\theta}{\otimes} H^*$ one of the braided tensor products of UAAs from the theorem. Then $\Hei(H):=\Hei'(H^*)$ is the well-known \emph{Heisenberg double} of the Hopf algebra~$H$ (cf. for example \cite{Montgomery,CibilsRosso}). 

Our next goal are explicit braided complexes for~$\BBialg(H)$. After detailed calculations with certain braided characters as coefficients, we discuss the general case of Hopf module coefficients.

First, for a bialgebra~$H$, we study adjoint actions of~$H^*$ on~$H^n$.

\begin{lemma}\label{thm:AdjActionsForBialg}
The tensor powers of a bialgebra $(H,\mu,\nu,\Delta,\varepsilon)$ in~$\vect$ can be endowed with an $H^*$-bimodule structure via the following formulas (Fig.~\ref{pic:HAsH*Bimod}):
\begin{align*}
\pi^{H^*} &=\pi^{\varepsilon_{H^*}} = ev^1  ev^2 \cdots  ev^n   (((\mu^*)^1)^{(n-1)} \otimes (\omega_{2n}^{-1}  \Delta^{\otimes n})) \quad : \quad H^* \otimes H^{ n} \to H^{ n},\\
{^{H^*}}\!\pi &={^{\varepsilon_{H^*}}}\!\pi = ev^{n+1}  ev^{n+2} \cdots  ev^{2n}  ((\omega_{2n}^{-1}  \Delta^{\otimes n}) \otimes ((\mu^*)^1)^{(n-1)} ) \quad : \quad H^{ n} \otimes H^* \to H^{ n},
\end{align*}
where notations~\eqref{eqn:phi_i} and~\eqref{eqn:omega} are used.
\end{lemma}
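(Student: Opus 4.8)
The plan is to recognise this lemma as the bialgebra specialisation of Proposition~\ref{thm:SmashadjointModules}, so that the $H^*$-bimodule structure on $H^n$ comes for free from the adjoint-module machinery and the only real work is a formula match. First I would assemble the crossed-product data in the symmetric category $\vect$, taking the ambient bialgebra to be $H$ itself: let $C:=H$ be the $H$-bicomodule algebra with both coactions equal to $\Delta$ (a comodule-algebra structure precisely because $\Delta$ is an algebra map), and let $A:=H^*$ and $B:=H^*$ be the left and right $H$-module algebras furnished by the two coregular actions. These module-algebra structures exist for any bialgebra and require no antipode. Proposition~\ref{thm:smash} then makes the triple $(B,C,A)=(H^*,H,H^*)$ a braided system of UAAs.

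Next I would feed this system into Proposition~\ref{thm:SmashadjointModules} with the \emph{same} algebra character $\varepsilon_{H^*}=\nu^*$ for both $\epsilon_A$ and $\epsilon_B$. This at once equips $C^n=H^n$ with a right $A=H^*$-action ${^{\epsilon_A}}\!\pi$ and a left $B=H^*$-action $\pi^{\epsilon_B}$, which are the maps written ${^{H^*}}\!\pi$ and $\pi^{H^*}$ in the statement. That the two actions commute — so that $H^n$ is a genuine $H^*$-bimodule and not merely a left and a right module — is exactly Lemma~\ref{thm:adjointBiModules}, whose hypothesis $\sigma_{1,r}=c_{V_1,V_r}$ holds here since $\xi_{1,3}=c_{H^*,H^*}=\tau$. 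It is worth noting why the enlarged rank~$3$ system is needed: the right action alone is already visible directly in $\BBialg(H)=(H,H^*)$ through Proposition~\ref{thm:adjoint_multi_coeffs} with $t=2$ and the character $\varepsilon_{H^*}$, but pulling the \emph{left} action out of that rank~$2$ system would force braiding $H^*$ back through $H^n$, i.e.\ the use of $\sigma_{bi}^{-1}$ and hence an antipode; routing both actions through $(H^*,H,H^*)$ keeps everything antipode-free.

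It then remains to check that the abstract actions produced by Proposition~\ref{thm:SmashadjointModules} coincide with the closed formulas in the statement. Here three simplifications do the job: (i) $\delta_l=\delta_r=\Delta$ turns each generic coaction into a copy of $\Delta$, producing the $\Delta^{\otimes n}$ and the interleaving permutation $\omega_{2n}$ exactly as written; (ii) the composite $\varepsilon_{H^*}\circ\lambda$ of a coregular $H$-action with the counit character of $H^*$ collapses, via the counit axiom, to the evaluation pairing $ev$; and (iii) the step ``multiply the $n$ extracted $H$-legs by $(\mu^1)^{\circ(n-1)}$ and pair the result against a single $H^*$'' is rewritten, via the defining duality $\langle\mu^*(l),h\otimes h'\rangle=\langle l,hh'\rangle$ and the rainbow convention of Table~\ref{tab:RainbowArched}, as ``comultiply the single $H^*$ by $((\mu^*)^1)^{\circ(n-1)}$ and pair the resulting legs against the $H$-copies by $ev^{n+1}\circ\cdots\circ ev^{2n}$''. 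Assembling these recovers precisely ${^{H^*}}\!\pi$, and the mirror computation recovers $\pi^{H^*}$; compare Fig.~\ref{pic:SmashBimodStructure} with Fig.~\ref{pic:HAsH*Bimod}.

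The conceptual content is thus carried entirely by Propositions~\ref{thm:smash} and~\ref{thm:SmashadjointModules} and by Lemma~\ref{thm:adjointBiModules}. The only genuine labour, and the step most likely to conceal a sign or an $op$/$cop$ discrepancy, is the bookkeeping in~(iii): keeping straight which tensor leg of each $\Delta$ is output and which is evaluated, and confirming that the placement of the $ev^i$ dictated by $\omega_{2n}$ agrees with the stated formulas. This is routine but fiddly, and is best carried out diagrammatically.
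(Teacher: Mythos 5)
Your overall strategy is the paper's own: assemble a rank~$3$ crossed-product system $(H^*,C,H^*)$ via Proposition~\ref{thm:smash}, feed it into Proposition~\ref{thm:SmashadjointModules} with the character $\varepsilon_{H^*}=\nu^*$ on both sides, and match formulas. However, your instantiation is off by a $cop$-twist, and this is a genuine error rather than deferred bookkeeping: the paper takes the \emph{ambient} bialgebra to be $H^{cop}$, with $C=(H^{cop},\Delta^{cop},\Delta^{cop})$ and $A=B=(H^*,(\mu^*)^{co})\in {}_{H^{cop}}\!\ModAlg$, resp.\ $\ModAlg_{H^{cop}}$ --- data manufactured in the proof of Theorem~\ref{thm:Bialg} precisely so that $\xi_{2,3}=\sigma_{bi}$ --- whereas you take the ambient bialgebra $H$, $C=H$ with both coactions $\Delta$, and the coregular actions. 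Two concrete things then fail. First, with the paper's rainbow-dual multiplication on $H^*$, $\langle l_1l_2,h\rangle=\langle l_1,h_{(2)}\rangle\langle l_2,h_{(1)}\rangle$, the coregular actions (which are exactly the maps $(\mu^*)^{co}$) satisfy the module-algebra axiom~\eqref{eqn:ModAlg} over $H^{cop}$, \emph{not} over $H$; so your input does not meet the hypotheses of Proposition~\ref{thm:smash}. Second, and independently of which actions you pick, the legs of $\Delta$ that survive in the adjoint actions are dictated by the coactions alone: with $\delta_l=\delta_r=\Delta$ the right action comes out as $\langle l_{(1)},h_{n(1)}\rangle\cdots\langle l_{(n)},h_{1(1)}\rangle\, h_{1(2)}\cdots h_{n(2)}$ and the left action as $\langle l_{(1)},h_{n(2)}\rangle\cdots\langle l_{(n)},h_{1(2)}\rangle\, h_{1(1)}\cdots h_{n(1)}$, i.e.\ exactly the two formulas of the lemma but attached to the \emph{opposite} sides. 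Over the noncommutative algebra $H^*$, ``left action'' and ``right action'' are different axioms, so this is a different (mirror) statement; the formula match you claim in step~(iii) would not be fiddly --- it would fail.

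The repair is precisely the paper's choice: run your argument over $H^{cop}$ (coactions $\Delta^{cop}$, actions the same coregular ones). Alternatively, your mirrored statement, once correctly established for all bialgebras, does imply the lemma by applying it to $H^{cop}$ and invoking $(H^{cop})^*\simeq(H^*)^{op}$ from Observation~\ref{thm:TwistedBialg}, so that a left $(H^{cop})^*$-action is a right $H^*$-action; but that extra step is absent from your write-up. The rest of your outline is sound and agrees with the paper: the commutativity of the two actions via Lemma~\ref{thm:adjointBiModules} (already built into Proposition~\ref{thm:SmashadjointModules}), the collapse $\varepsilon_{H^*}\circ\lambda=ev$ by the counit axiom, and the pertinent remark that extracting the left action from the rank~$2$ system $\BBialg(H)$ would require $\sigma_{bi}^{-1}$, hence an antipode.
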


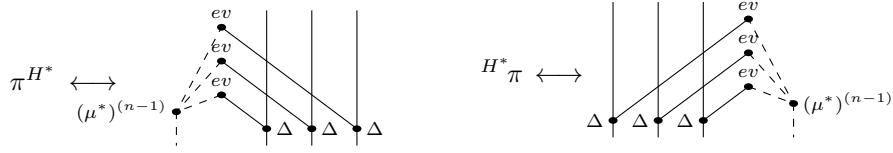
\begin{figure}\centering
\begin{tikzpicture}[xscale=0.6,yscale=0.45]
 \node at (-4.5,2) {$\pi^{H^*} \, \longleftrightarrow$};
 \draw (0,0) --  (0,4);
 \draw (1,0) -- (1,4);
 \draw (2,0) -- (2,4);
 \draw[dashed] (-1,1.5) -- (-2,1) -- (-2,0);
 \draw[dashed] (-1,2.5) -- (-2,1) -- (-1,3.5);
 \draw (0,0.5) -- (-1,1.5);
 \draw (1,0.5) -- (-1,2.5);
 \draw (2,0.5) -- (-1,3.5);
 \node at (0,0.5) [right] {$\scriptstyle \Delta$};
 \node at (1,0.5) [right] {$\scriptstyle \Delta$};
 \node at (2,0.5) [right] {$\scriptstyle \Delta$};
 \node at (-2,1) [left] {$\scriptstyle (\mu^*)^{(n-1)}$};
 \node at (-1,1.5) [above] {$\scriptstyle ev$};
 \node at (-1,2.5) [above] {$\scriptstyle ev$};
 \node at (-1,3.5) [above] {$\scriptstyle ev$};
 \fill (0,0.5) circle (0.1);
 \fill (1,0.5) circle (0.1);
 \fill (2,0.5) circle (0.1);
 \fill (-2,1) circle (0.1);
 \fill (-1,1.5) circle (0.1);
 \fill (-1,2.5) circle (0.1);
 \fill (-1,3.5) circle (0.1);
\end{tikzpicture}
\begin{tikzpicture}[xscale=0.6,yscale=0.45]
 \node at (-3.5,0) {};
 \node at (-0.8,2) {${^{H^*}}\!\pi \, \longleftrightarrow$};
 \draw (1,0) -- (1,4);
 \draw (2,0) -- (2,4);
 \draw (3,0) -- (3,4);
 \draw[dashed] (4,1.5) -- (5,1) -- (5,0);
 \draw[dashed] (4,2.5) -- (5,1) -- (4,3.5);
 \draw (1,0.5) -- (4,3.5);
 \draw (2,0.5) -- (4,2.5);
 \draw (3,0.5) -- (4,1.5);
 \node at (1,0.5) [left] {$\scriptstyle \Delta$};
 \node at (2,0.5) [left] {$\scriptstyle \Delta$};
 \node at (3,0.5) [left] {$\scriptstyle \Delta$};
 \node at (5,1) [right] {$\scriptstyle (\mu^*)^{(n-1)}$};
 \node at (4,1.5) [above] {$\scriptstyle ev$};
 \node at (4,2.5) [above] {$\scriptstyle ev$};
 \node at (4,3.5) [above] {$\scriptstyle ev$};
 \fill (1,0.5) circle (0.1);
 \fill (2,0.5) circle (0.1);
 \fill (3,0.5) circle (0.1);
 \fill (5,1) circle (0.1);
 \fill (4,1.5) circle (0.1);
 \fill (4,2.5) circle (0.1);
 \fill (4,3.5) circle (0.1);
\end{tikzpicture}
\caption{$H^{n}$ as an $H^*$-bimodule}\label{pic:HAsH*Bimod}
\end{figure}
On the level of elements, the formulas can be written as
\begin{align*}
\pi^{H^*}&(l \otimes h_1\ldots h_n) = \left\langle l_{(1)},h_{n(1)}\right\rangle \left\langle l_{(2)},h_{n-1(1)}\right\rangle\ldots \left\langle l_{(n)},h_{1(1)}\right\rangle h_{1(2)}\ldots h_{n(2)},\\
{^{H^*}}\!\pi&(h_1\ldots h_n \otimes l) = \left\langle l_{(1)},h_{n(2)}\right\rangle \left\langle l_{(2)},h_{n-1(2)}\right\rangle\ldots \left\langle l_{(n)},h_{1(2)}\right\rangle h_{1(1)}\ldots h_{n(1)}.
\end{align*}

\begin{proof}
In the proof of Theorem~\ref{thm:Bialg}, we observed that Proposition~\ref{thm:smash} applies to $A=(H^*,(\mu^*)^{co})\in\, _{H^{cop}}\!\ModAlg$ and  $C=(H^{cop},\Delta^{cop}, \Delta^{cop})\in\, ^{H^{cop}}\!\ModAlg^{H^{cop}}$ (recall Notation~\ref{not:TwistedCoMult}). Symmetry considerations allow to complete this couple with $B=(H^*,(\mu^*)^{co})$ $\in \ModAlg_{H^{cop}}$, and feed it into Proposition~\ref{thm:SmashadjointModules} together with the counit $\varepsilon_{H^*}=(\nu_H)^*$ of~$H^*$. This counit is an algebra character of~$H^*$ and hence of~$A$ and~$B$. The output yields the desired actions.
\end{proof}

Interchanging the roles of~$H$ and~$H^*$, one gets $H$-bimodules $((H^*)^{m},\pi^{H},{^{H}}\!\pi)$. By abuse of notation, we define, for all $m,n \in \NN$ for which this makes sense, the following morphisms from $H^{ n}\otimes (H^*)^{ m}$ to $H^{ (n-1)}\otimes (H^*)^{ m}$ or to $H^{ n}\otimes (H^*)^{ (m-1)}$:
\begin{align*}
{^{H^*}}\!\pi &= {^{H^*}}\!\pi \otimes \Id_{H^*}^{\otimes (m-1)},&
\pi^{H^*} &= (\pi^{H^*} \otimes \Id_{H^*}^{\otimes (m-1)})  \tau_{H^{ n}\otimes (H^*)^{(m-1)},H^*},\\
\pi^{H} &= \Id_{H}^{\otimes (n-1)} \otimes \pi^{H},&
{^{H}}\!\pi &= (\Id_{H}^{\otimes (n-1)} \otimes {^{H}}\!\pi)  \tau_{H,H^{(n-1)}\otimes (H^*)^{m}}.
\end{align*}

\begin{lemma}\label{thm:AdjActionsForBialgCommute}
These four endomorphisms of $T(H)\otimes T(H^*)$ pairwise commute.
\end{lemma}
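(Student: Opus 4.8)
The plan is to reduce the statement to the six pairwise commutation identities among the operators and to organize them by \emph{which} algebra acts. The four operators fall into two \emph{same-algebra} pairs, $\{\pi^{H^*},{^{H^*}}\!\pi\}$ and $\{\pi^{H},{^{H}}\!\pi\}$, and four \emph{cross} pairs in which an $H^*$-action meets an $H$-action. Since $\C=\vect$ consists of finite-dimensional spaces, $H^{**}\cong H$ canonically, and the construction is invariant under the duality $H\leftrightarrow H^*$ that exchanges $n\leftrightarrow m$, $\pi^{H^*}\leftrightarrow\pi^{H}$ and ${^{H^*}}\!\pi\leftrightarrow{^{H}}\!\pi$. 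Under this duality the second same-algebra pair is obtained from the first, the cross pair $\{{^{H^*}}\!\pi,\pi^{H}\}$ from $\{\pi^{H^*},{^{H}}\!\pi\}$, while $\{{^{H^*}}\!\pi,{^{H}}\!\pi\}$ and $\{\pi^{H^*},\pi^{H}\}$ are self-dual; so it suffices to treat one same-algebra pair and three cross pairs.

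First I would dispose of the same-algebra pairs. The commutation of $\pi^{H^*}$ and ${^{H^*}}\!\pi$ is exactly the assertion that these are the left and the right action of the $H^*$-bimodule structure carried by $H^{n}$ (Lemma~\ref{thm:AdjActionsForBialg}); equivalently it is an instance of Lemma~\ref{thm:adjointBiModules} applied to the braided system of UAAs $(H^*,H^{cop},H^*)$ used to produce that bimodule, whose outer component is $\sigma_{1,3}=c$. The only point deserving care is the passage from $H^{n}$ to $T(H)\otimes T(H^*)$: here $\pi^{H^*}$ consumes the last tensorand of the $(H^*)^{m}$ block and ${^{H^*}}\!\pi$ the first one, so the two operators act on $H^{n}$ through \emph{distinct, independent} copies of $H^*$, while the auxiliary transpositions $\tau$ merely reindex the untouched copies; thus the bimodule axiom yields the commutation on the whole of $T(H)\otimes T(H^*)$. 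The pair $\{\pi^{H},{^{H}}\!\pi\}$ is handled symmetrically by duality.

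The heart of the argument is the cross pairs, and this is the step I expect to be the main obstacle. There an $H^*$-action contracts the $H^{n}$-block against a copy of $H^*$, while an $H$-action contracts the $(H^*)^{m}$-block against a copy of $H$; the two operators read \emph{overlapping} data, so their commutation is not formal and must encode the bialgebra compatibility~\eqref{eqn:cat_bialg}. The structural route I would try is to assemble all the relevant actions inside a single braided system of UAAs whose ordered tensor products carry $H^{n}\otimes(H^*)^{m}$ flanked by the acting copies — concretely a system alternating $H^*,H,H^*,H,\dots$, so that each action is realized as an adjoint action in the sense of Proposition~\ref{thm:adjoint_multi_coeffs}; one then reads off the cross commutations either from Point~2 of that proposition (adjoint differentials are module morphisms, hence commute with the adjoint actions) or, as in Lemma~\ref{thm:adjointBiModules}, from repeated use of the colored (YBE) combined with the naturality of $\sigma_{bi}$ with respect to the multiplications. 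The delicate part is keeping the several $\sigma_{bi}$ crossings straight while sliding the two acting strands past each other through \emph{both} blocks.

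Should the diagrammatic bookkeeping become unwieldy, the pragmatic fallback is a direct verification: evaluate both composition orders on a pure tensor $h_1\cdots h_n\otimes l_1\cdots l_m$ using the closed Sweedler formulas of Lemma~\ref{thm:AdjActionsForBialg} and its $H\leftrightarrow H^*$ mirror, and match the two expressions by coassociativity, associativity and the $\mu$-$\Delta$ compatibility in~\eqref{eqn:cat_bialg}. This is routine but notation-heavy, which is precisely why I would prefer to extract the cross commutations from the braided-system formalism whenever the strand manipulation stays manageable.
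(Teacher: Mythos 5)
Your handling of the same-algebra pairs is exactly the paper's: commutation of $\pi^{H^*}$ and ${^{H^*}}\!\pi$ is the $H^*$-bimodule axiom of Lemma~\ref{thm:AdjActionsForBialg} (itself an instance of Lemma~\ref{thm:adjointBiModules} for the system $(H^*,H^{cop},H^*)$), and the pair $\{\pi^{H},{^{H}}\!\pi\}$ follows by the $H\leftrightarrow H^*$ swap. The gap is in your preferred route for the cross pairs, in three respects. First, Point~2 of Proposition~\ref{thm:adjoint_multi_coeffs} states that braided \emph{differentials} are morphisms of adjoint modules; it is a differential--action compatibility and cannot produce a commutation between two \emph{actions}. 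Second, the four operators are not the adjoint actions of any single braided system: ${^{H^*}}\!\pi$ and $\pi^{H}$ are indeed the two adjoint actions attached to $\BBialg(H)$ itself (they pull the two interface-adjacent strands outwards through $\sigma_{bi}$), but $\pi^{H^*}$ and ${^{H}}\!\pi$ come from the swapped system, and in their extended form on $H^{n}\otimes(H^*)^{m}$ each is a plain flip $\tau$ past one block composed with braided crossings past the other; no alternating system $(H^*,H,H^*,H,\dots)$ realizing all four is constructed in the paper, and you do not construct one. Third, and most seriously, you treat the cross pairs uniformly, whereas they split into two genuinely different classes. For $\{\pi^{H},{^{H^*}}\!\pi\}$ and $\{\pi^{H^*},{^{H}}\!\pi\}$ the strand-sliding argument works: the two consumed strands are pulled in opposite directions, the crossings involved are pairwise disjoint except the single shared $\sigma_{bi}$ crossing, so both composites are the same diagram. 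But for $\{{^{H^*}}\!\pi,{^{H}}\!\pi\}$ and $\{\pi^{H^*},\pi^{H}\}$ --- precisely the two pairs you correctly identify as self-dual, so your duality reduction gives nothing there --- the two consumed strands cross each other \emph{twice}, once via $\tau$ and once via $\sigma_{bi}$, and the sliding argument breaks down.

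The paper's idea for these two recalcitrant pairs, which your plan lacks, is an order-reversal trick: the isomorphism $\Delta_n\otimes\Id_{H^*}^{\otimes m}\colon H^{n}\otimes(H^*)^{m}\to(H^{op})^{n}\otimes((H^{op})^*)^{m}$ (reversing the $H$-factors) exchanges left and right structures, transporting $\pi^{H}\mapsto{^{H^{op}}}\!\pi$, ${^{H}}\!\pi\mapsto\pi^{H^{op}}$, ${^{H^*}}\!\pi\mapsto{^{(H^{op})^*}}\!\pi$, $\pi^{H^*}\mapsto\pi^{(H^{op})^*}$, and thus carries $\{{^{H^*}}\!\pi,{^{H}}\!\pi\}$ and $\{\pi^{H^*},\pi^{H}\}$ to pairs of the tractable class for the bialgebra $H^{op}$. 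That said, your declared fallback does rescue the proposal: evaluating both composites on $h_1\cdots h_n\otimes l_1\cdots l_m$ and matching legs verifies all six identities, and the check is lighter than you fear --- only the coassociativity of $\Delta$ and of $\mu^*$ is used in the matching; the compatibility~\eqref{eqn:cat_bialg} is what makes these maps well-defined actions (Lemma~\ref{thm:AdjActionsForBialg}), not what makes them commute. So the proposal is completable, but via its computational fallback; the structural route you prefer stalls on the two self-dual cross pairs without the $H^{op}$ order-reversal idea.
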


\begin{proof}
Lemma~\ref{thm:AdjActionsForBialg} implies the commutativity of~${^{H^*}}\!\pi$ and~$\pi^{H^*}$. Replacing~$H$ with~$H^*$, one gets the commutativity of~${^{H}}\!\pi$ and~$\pi^{H}$. Next, returning to the braided interpretation of the adjoint actions, $\pi^{H}$ corresponds to pulling the rightmost $H$-strand to the right of all the $H^*$-strands (using~$\sigma_{bi}$) and applying~$\varepsilon_H$,  while~${^{H^*}}\!\pi$ means pulling the leftmost $H^*$-strand to the left of all the $H$-strands and applying~$\varepsilon_{H^*}$. Thus~$\pi^{H}$ and~${^{H^*}}\!\pi$ commute. The case of~$\pi^{H^*}$ and~${^{H}}\!\pi$ is analogous.

For the two remaining pairs, consider the linear isomorphisms
\[ \Delta_n \otimes  \Id_{H^*}^{\otimes m} \colon H^{n}\otimes (H^*)^{m} \overset{\sim}{\longrightarrow} (H^{op})^{n}\otimes ((H^{op})^*)^{m}, \quad \quad \Delta_n := \bigl( \begin{smallmatrix}
 1 & 2 & \cdots & n\\ n & n-1 & \cdots & 1
\end{smallmatrix} \bigr)
\in S_n,\]
where~$S_n$ acts on~$H^{n}$ by component permutation. These isomorphisms transport the endomorphisms ${^{H^*}}\!\pi$, $\pi^{H^*}$, $\pi^{H}$, and ${^{H}}\!\pi$ of $H^{\otimes n}\otimes (H^*)^{\otimes m}$ to, respectively, ${^{(H^{op})^*}}\!\pi$, $\pi^{(H^{op})^*}$, ${^{H^{op}}}\!\pi$, and $\pi^{H^{op}}$. Thus the commutativity of ${^{(H^{op})^*}}\!\pi$ and $\pi^{H^{op}}$ induces that of ${^{H^*}}\!\pi$ and ${^{H}}\!\pi$, and similarly for $\pi^{H^*}$ and $\pi^{H}$.
\end{proof}

Further, recall the \emph{bar} and (the dual of the) \emph{cobar differentials} on $T(H)\otimes T(H^*)$:
\begin{align}
d_{bar}&(h_1\ldots h_n \otimes l_1\ldots l_m) = \sum\nolimits_{i=1}^{n-1} (-1)^{i}h_1\ldots (h_i \cdot h_{i+1}) \ldots h_n \otimes l_1\ldots l_m,\label{eqn:dbar}\\
d_{cob}&(h_1\ldots h_n \otimes l_1\ldots l_m) = \sum\nolimits_{i=1}^{m-1} (-1)^{i}h_1\ldots h_n \otimes l_1\ldots (l_i\cdot l_{i+1}) \ldots l_m.\label{eqn:dcob}
\end{align}

\begin{proposition}\label{thm:bialg2}
For a finite-dimensional $\k$-linear bialgebra $(H,\mu,\nu,\Delta,\varepsilon)$, the bigraded vector space $T(H)\otimes T(H^*)= \bigoplus_{n,m \in \NN}H^{n}\otimes (H^*)^{m}$ can be endowed with four \emph{bicomplex} structures, presented in Table~\ref{tab:BialgBidiff}. Being a bicomplex means here satisfying
\begin{align*}
d_{n-1,m}  d_{n,m} &= 0, & d'_{n,m-1}  d'_{n,m} &= 0, & d_{n,m-1}  d'_{n,m} + d'_{n-1,m}  d_{n,m} &= 0.
\end{align*}
\begin{table}\centering
\setlength{\tabcolsep}{3pt}
\begin{tabular}{|c|c|c|}
\hline \rowcolor{lightgrey}
& $d_{n,m}: H^{ n}\otimes (H^*)^{ m} \to H^{n-1}\otimes (H^*)^{ m}$
& $d'_{n,m}: H^{ n}\otimes (H^*)^{ m} \to H^{ n}\otimes (H^*)^{m-1}$ \\
\hline
1&$ d_{bar}$&$(-1)^{n}d_{cob}$ \\
\hline 
2&$d_{bar}+(-1)^{n}\pi^{H}$ &$(-1)^{n}d_{cob}+(-1)^{n}({^{H^*}}\!\pi)$ \\
\hline 
3& $d_{bar}+{^{H}}\!\pi$& $(-1)^{n}d_{cob}+(-1)^{n+m}{\pi^{H^*}}$\\
\hline 
4&$d_{bar}+(-1)^{n}\pi^{H}+{^{H}}\!\pi$ & $(-1)^{n}d_{cob}+(-1)^{n}({^{H^*}}\!\pi)+(-1)^{n+m}{\pi^{H^*}}$\\
\hline 
\end{tabular}\setlength{\tabcolsep}{6pt}
\caption{Bicomplex structures on $T(H)\otimes T(H^*)$}\label{tab:BialgBidiff}
\end{table}
\end{proposition}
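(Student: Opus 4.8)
The plan is to verify the three defining relations of a bicomplex one at a time, reading the four rows of Table~\ref{tab:BialgBidiff} as the four possible coefficient choices for one and the same underlying construction. For fixed $m$, the column $d_{n,m}$ is a bar differential for the UAA $H$: the term $d_{bar}$ from \eqref{eqn:dbar} performs the inner multiplications, while the adjoint actions ${^{H}}\!\pi$ and $\pi^{H}$ play the role of the two boundary faces, acting on the coefficient module $(H^*)^{m}$, which is an $H$-bimodule by Lemma~\ref{thm:AdjActionsForBialg} (with the roles of $H$ and $H^*$ interchanged, as noted after its proof). Dually, for fixed $n$, the column $d'_{n,m}$ is a cobar differential for $H^*$, with the $H^*$-bimodule $H^{n}$ of Lemma~\ref{thm:AdjActionsForBialg} as coefficients. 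Thus rows 1--4 correspond respectively to trivial coefficients, to a one-sided ($\pi^{H}$- resp.\ ${^{H}}\!\pi$-) module, and to the full bimodule.

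For the relation $d_{n-1,m}\circ d_{n,m}=0$ I would argue as follows. In row~1 the coefficients are trivial and $d_{bar}\circ d_{bar}=0$ is the associativity (simplicial) identity already contained in Theorem~\ref{thm:UAA}.\ref*{item:bar}. In rows~2 and~3 one adjoins a single boundary term coming from a one-sided $H$-module structure on $(H^*)^{m}$, and in row~4 one adjoins both, coming from the $H$-bimodule structure. In each case $d^2=0$ is precisely the assertion that the associated bar differential with coefficients in the adjoint (bi)module squares to zero: the one-sided cases are instances of the braided differential machinery of Theorem~\ref{thm:BraidedSimplHomCat} (with coefficients on a single side, cf.\ the last item of the Remark following that theorem), and the two-sided case follows from the Corollary after it, which guarantees that $\ZZ$-linear combinations of the left and right braided differentials are again differentials. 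The vertical relation $d'_{n,m-1}\circ d'_{n,m}=0$ is then obtained verbatim after interchanging $H$ and $H^*$, replacing $d_{bar}$ by $d_{cob}$ and the $H$-(bi)module $(H^*)^m$ by the $H^*$-(bi)module $H^n$.

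The crucial relation is $d_{n,m-1}\circ d'_{n,m}+d'_{n-1,m}\circ d_{n,m}=0$. Expanding the two columns, it breaks into the nine products of an operator from $\{d_{bar},\pi^{H},{^{H}}\!\pi\}$ with one from $\{d_{cob},\pi^{H^*},{^{H^*}}\!\pi\}$. A short sign count shows that the signs $(-1)^{n}$ and $(-1)^{n+m}$ prescribed in the table are exactly those turning each such product-pair into a commutator; for instance the ${^{H}}\!\pi$-versus-$\pi^{H^*}$ contribution equals $(-1)^{n+m}\bigl({^{H}}\!\pi\,\pi^{H^*}-\pi^{H^*}\,{^{H}}\!\pi\bigr)$, and analogously for the remaining eight pairs. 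Hence the whole relation reduces to the statement that every operator of the $H$-column commutes with every operator of the $H^*$-column. These nine commutations are supplied by three inputs: $d_{bar}$ and $d_{cob}$ act on disjoint tensor factors and so commute; by Point~2 of Proposition~\ref{thm:adjoint_multi_coeffs} the bar differential is a morphism of the adjoint $H^*$-bimodule structure, so $d_{bar}$ commutes with $\pi^{H^*}$ and ${^{H^*}}\!\pi$ (and dually $d_{cob}$ commutes with $\pi^{H}$ and ${^{H}}\!\pi$); and Lemma~\ref{thm:AdjActionsForBialgCommute} gives the pairwise commutativity of the four adjoint actions.

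I expect the main obstacle to be twofold and concentrated in the normalizations and signs. First, one must pin down the identification of the intrinsic, braiding-mediated adjoint actions $\pi^{H},{^{H}}\!\pi$ (and their $H^*$-analogues) as honest boundary faces of a bar complex, so that the row-4 relation $d^2=0$ genuinely falls under Theorem~\ref{thm:BraidedSimplHomCat} and its Corollary rather than demanding a separate simplicial computation; this is where the normalization by the counits and the precise placement of the $ev$/$coev$ maps must be tracked. Second, the nine-term sign bookkeeping in the cross relation must be carried out so that each pair collapses to a commutator with a single uniform global sign; the sample computation above indicates that the table's signs are forced precisely by this requirement, but verifying all nine pairs simultaneously, including the behaviour of the degree-dependent signs under the shifts $n\mapsto n-1$ and $m\mapsto m-1$, is the delicate step.
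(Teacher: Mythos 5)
Your handling of rows 1--3 and of the mixed relation is essentially sound. Rows 2--3 of the $d^2=0$ claim are indeed instances of the braided differentials of Theorem~\ref{thm:BraidedSimplHomCat} with one-sided character coefficients (this is exactly the paper's route: $d_{bar}+(-1)^{n}\pi^{H}=-d^{\varepsilon_H}$, with $\varepsilon_H$ extended by zero to $H^*$, and symmetrically for the mirror system), and your reduction of $d_{n,m-1}\circ d'_{n,m}+d'_{n-1,m}\circ d_{n,m}=0$ to nine commutators is correct --- the sign count works for all nine pairs, and the needed commutations are available from disjointness of supports, (mirror versions of) Proposition~\ref{thm:adjoint_multi_coeffs}, and Lemma~\ref{thm:AdjActionsForBialgCommute}. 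This packaging of the mixed relation is even somewhat more uniform than the paper's, which obtains it for rows 2--3 as part of the bidifferential statement of Theorem~\ref{thm:BraidedSimplHomCat} and for row 4 from an elementary combination lemma.

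The genuine gap is your justification of $d^2=0$ (and $d'^2=0$) in row 4. You assert that $d_{bar}+(-1)^{n}\pi^{H}+{^{H}}\!\pi$ squares to zero ``by the Corollary, since $\ZZ$-linear combinations of left and right braided differentials are differentials.'' But the Corollary only covers operators of the form $\alpha\,({^{\rho}}\!d)+\beta\,(d^{\lambda})$ on $M\otimes T(\oV)_{n}^\rightarrow\otimes N$ for a \emph{single} braided system, with both coefficient objects at the outer ends and every term splitting a factor towards one of them \emph{through the system's braiding}. The row-4 differential is not of this form: $d_{bar}+(-1)^{n}\pi^{H}$ is (up to sign) the braided differential $d^{\varepsilon_H}$ of $\BBialg(H)$, but the third term ${^{H}}\!\pi$ is the wrap-around action, defined via the symmetry $\tau_{H,H^{n-1}\otimes(H^*)^{m}}$ of $\vect$, which transports the first $H$-factor around the outside so that it acts on $(H^*)^{m}$ from the right; it arises from the \emph{mirror} presentation in which the $H^*$-factors stand first, and is a term of no ${^{\rho}}\!d$ or $d^{\lambda}$ of $\BBialg(H)$ (splitting $h_1$ to the right through $\sigma_{Ass}^r$ and $\sigma_{bi}$ produces only $\mu^1$). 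Consequently the Corollary gives no control over the cross terms of $(-1)^{n}\pi^{H}$ against ${^{H}}\!\pi$ in $d\circ d$, and it is exactly their cancellation --- the commutation of $\pi^{H}$ with ${^{H}}\!\pi$, i.e.\ the bimodule property coming from Lemmas~\ref{thm:AdjActionsForBialg} and~\ref{thm:AdjActionsForBialgCommute} --- that is at stake. The paper closes precisely this hole with its combination lemma, deducing row 4 from rows 1--3 together with the pairwise (anti)commutativity of the four adjoint actions. Note that your own nine-pair technique, applied to $d\circ d$ and $d'\circ d'$ instead of the mixed relation, would repair the proof with no new ingredients: expand $(a+b+c)^2$ as $(a+b)^2+(a+c)^2-a^2+(b\circ c+c\circ b)$ and invoke rows 1--3 plus Lemma~\ref{thm:AdjActionsForBialgCommute} for the last bracket.
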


\begin{proof}
\begin{enumerate}
\item Maps~$d_{bar}$ and~$d_{cob}$ are well known to be differentials (see also their interpretation as braided differentials in Theorem~\ref{thm:UAA}). They affect disjoint parts $T(H)$ and $T(H^*)$ of $T(H)\otimes T(H^*)$, and thus commute. The sign~$(-1)^{n}$ then assures the anticommutativity.
\item Return to the braided system~$\overline{H}_{bi}$, which we no longer consider as bipointed. The counit~$\varepsilon_H$ of~$H$ is an algebra character, hence a braided character for $(H,\sigma_{Ass}^r(H))$. Extended to~$H^*$ by zero, it becomes a braided character for~$\overline{H}_{bi}$ (Example~\ref{rmk:single_br_char}). Similarly, $\varepsilon_{H^*}$  extended to~$H$ by zero is also a braided character for~$\overline{H}_{bi}$. Choosing them as coefficients, one gets the following braided bidifferential, which coincides with the desired one up to a sign:
\begin{align*}
{^{\varepsilon_{H^*}}}\! d &=(-1)^{n}d_{cob}+(-1)^{n}({^{H^*}}\!\pi),&
d{^{\varepsilon_{H}}}&= -(d_{bar}+(-1)^{n}\pi^{H}).
\end{align*}
\item Symmetrically, one gets a bidifferential $((-1)^{m}(d_{bar}+{^{H}}\!\pi),d_{cob}+(-1)^{m}\pi^{H^*})$, hence $(d_{bar}+{^{H}}\!\pi,(-1)^{n}d_{cob}+(-1)^{n+m}\pi^{H^*})$.
\item The last point follows from the preceding ones using an elementary observation:
\begin{lemma}
Take an Abelian group $(S,+,0,a\mapsto -a)$ endowed with an operation~$\cdot$, distributive with respect to~$+$. Then, for any $a,b,c,d,e,f \in S$,
\begin{align*}
(a+b)\cdot & (d+e) = (a+c)\cdot (d+f) = a \cdot d = b \cdot f + c \cdot e = 0\\
&\Longrightarrow \quad (a+b+c)\cdot (d+e+f) = 0.
\end{align*}
\end{lemma}

\begin{proof}
$(a+b+c)\cdot (d+e+f) = (a+b)\cdot (d+e) + (a+c)\cdot (d+f) - a \cdot d +( b \cdot f + c \cdot e)$.
\end{proof}
Now take $S=\End_\k(T(H)\otimes T(H^*))$ with the usual addition and, as the second operation, $a \cdot b:= a  b$ (for proving that the two morphisms from the $4$th line of our table are differentials), or  $a \cdot b:= a  b + b  a$ (for proving that the two morphisms anti-commute). Choose $a= d_{bar}$, $b=(-1)^{n}\pi^{H}$, $c={^{H}}\!\pi$, $d= d_{bar}$ or $d=(-1)^{n}d_{cob}$, etc. The equalities of the type $b \cdot f + c \cdot e = 0$ follow from the pairwise anti-commutativity of $(-1)^{n}({^{H^*}}\!\pi)$, $(-1)^{n+m}\pi^{H^*}$, $(-1)^{n}\pi^{H}$, and ${^{H}}\!\pi$ (Lemma~\ref{thm:AdjActionsForBialgCommute}), and the remaining ones from Points 1-3. \qedhere
\end{enumerate}
\end{proof}

One recognizes in $d_{bar}+(-1)^{n}\pi^{H}+{^{H}}\!\pi$ the Hochschild differential for~$H$ with  coefficients in the $H$-bimodule $T(H^*)$ (Lemma~\ref{thm:AdjActionsForBialg}). Dually, $d_{cob}+{^{H^*}}\!\pi+(-1)^{m}{\pi^{H^*}}$ is the Hochschild differential for~$H^*$. Thus the last bicomplex from Table~\ref{tab:BialgBidiff} yields the \emph{Gersten\-haber--Schack bialgebra homology}~\cite{GS90}. See Taillefer's thesis~\cite{TailleferThese} for computations and comparison with other homologies, and the work of Mastnak--Witherspoon \cite{MW} for explicit formulas and the transition from $\Hom_\k(H^{m},H^{n})$ to $ H^{n}\otimes (H^*)^{m}$. 

Now, instead of the braided characters~$\varepsilon_H$ and~$\varepsilon_{H^*}$  for~$\BBialg(H)$, take general braided modules $(M,\rho,\delta) \in \ModCat_H^H \simeq \ModCatN_{\BBialg(H)}$ and $(N,\lambda,\gamma) \in {_{H^*}^{H^*}}\!\ModCat \simeq {_{\BBialg(H)}}\!\ModCatN$. On $M \otimes H^{n}\otimes (H^*)^{m} \otimes N$, define the maps~$\pi^{H}$ and ${^{H^*}}\!\pi$ using adjoint actions as before:
\begin{align*}
\pi^{H}&(a \otimes h_1\ldots h_n \otimes l_1 \ldots l_m\otimes b) = \\
&\left\langle l_{1(1)},h_{n(m+1)}\right\rangle \ldots \left\langle l_{m(1)},h_{n(2)}\right\rangle \left\langle b_{-1},h_{n(1)}\right\rangle 
a \otimes h_1\ldots h_{n-1} \otimes l_{1(2)}\ldots l_{m(2)} \otimes b_0,\\
{^{H^*}}\!\pi&(a \otimes h_1\ldots h_n \otimes l_1 \ldots l_m \otimes b) = \\
&\left\langle l_{1(1)},h_{n(2)}\right\rangle \ldots \left\langle l_{1(n)},h_{1(2)}\right\rangle \left\langle l_{1(n+1)},a_1\right\rangle 
a_0 \otimes h_{1(1)}\ldots h_{n(1)} \otimes l_2 \ldots l_m\otimes b.
\end{align*}
 Further, let~${^{H}}\!\pi$ be the action~$\rho$ applied to the two leftmost factors, and let~$\pi^{H^*}$ be the action~$\lambda$ applied to the two rightmost factors. We still denote by~$d_{bar}$ and~$d_{cob}$ the differentials \eqref{eqn:dbar}-\eqref{eqn:dcob} tensored with~$\Id_M$ on the left and with~$\Id_N$ on the right. Repeating the argument of Proposition~\ref{thm:bialg2} for these maps, one shows that $d_{bar}+(-1)^{n}\pi^{H}+{^{H}}\!\pi$ and $(-1)^{n}d_{cob}+(-1)^{n}({^{H^*}}\!\pi)+(-1)^{n+m}{\pi^{H^*}}$ define a bicomplex on $M \otimes T(H)\otimes T(H^*) \otimes N$. If~$N$ is finite dimensional, then one can see $M \otimes H^{n}\otimes (H^*)^{m} \otimes N$ as $\Hom(N^* \otimes H^m,M \otimes H^n)$, with $ N^* \in \ModCat_H^H $. One recovers (a variation of) the \emph{deformation (co)homology of Hopf modules}, due to Panaite--{\c{S}}tefan \cite{Panaite}.

\section{A braided interpretation of Hopf bimodules}\label{sec:HBimod}

In this section, the braided system~$\BBialg(H)$ for a bialgebra~$H$ is upgraded to a more complicated rank~$4$ system~$\BBialgg(H)$. Braided $\BBialgg(H)$-modules are identified as \emph{Hopf bimodules} over~$H$, or else as modules over the algebras~$\X$, $\Y$, and~$\Z$ of Cibils--Rosso and Panaite. These algebras are included into a list of $24$ braided tensor products of UAAs, shown pairwise isomorphic by component permuting techniques. Braided bidifferentials for~$\BBialgg(H)$ recover the \emph{Hopf bimodule (co)homology} of Ospel--Taillefer.

\begin{definition}\label{def:cat_Hopfbi}
In a braided category~$\C$, a \emph{Hopf bimodule} over a bialgebra~$H$ is an object~$M$ with a bimodule structure  $ M \otimes H \overset{\rho}{\to} M, H \otimes M \overset{\lambda}{\to} M$ and a bicomodule structure $M \overset{\delta}{\to} M \otimes H, M \overset{\gamma}{\to} H \otimes M$, satisfying~\eqref{eqn:rrHopfMod} and $3$ other \emph{Hopf compatibility conditions} (Fig.~\ref{pic:HopfBiMod}):
\begin{align*}
\delta  \lambda &= (\lambda \otimes \mu) (\Id_H \otimes c_{H,M} \otimes \Id_H)  (\Delta \otimes \delta) \quad : \quad H \otimes M \to  M \otimes H, \\
\gamma  \rho &= (\mu \otimes \rho) (\Id_H \otimes c_{M,H} \otimes \Id_H)  (\gamma \otimes \Delta) \quad : \quad M \otimes H \to  H \otimes M, \\
\gamma  \lambda &= (\mu \otimes \lambda) (\Id_H \otimes c_{H,H} \otimes \Id_M)  (\Delta \otimes \gamma) \quad : \quad H \otimes M \to  H \otimes M.
\end{align*}
The category of Hopf bimodules over~$H$ and their morphisms is denoted by ${_H^H}\!\ModCat_H^H$. 
\end{definition}

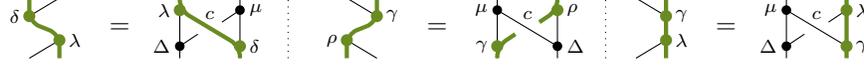
\begin{figure}\centering
\begin{tikzpicture}[xscale=0.8, yscale = 0.8]
 \draw [colorM, ultra thick,rounded corners] (1,0) --(1,0.4) --(0.5,0.6) --  (0.5,1);
 \draw (0.5,0) -- (1,0.3);
 \draw (0.5,0.7) -- (1,1);
 \node at (1,0.3) [right] {$\scriptstyle \lambda$};
 \node at (0.5,0.7) [left] {$\scriptstyle \delta$};
 \fill [colorM] (1,0.3) circle (0.1);
 \fill [colorM] (0.5,0.7) circle (0.1);
 \node at (2,0.5) {$=$};
 \draw (4,1) -- (4,0);
 \draw (3,0.2) -- (3.3,0.4); 
 \draw (3.7,0.6) -- (4,0.8);
 \draw (3,0) -- (3,1);
 \draw [colorM, ultra thick,rounded corners] (4,0) -- (4,0.2) -- (3,0.8) -- (3,1);
 \fill [colorM] (4,0.2) circle (0.1);
 \fill [colorM] (3,0.8) circle (0.1);
 \fill (4,0.8) circle (0.08);
 \fill (3,0.2) circle (0.08);
 \node at (3,0.8) [left] {$\scriptstyle \lambda$};
 \node at (3,0.2) [left] {$\scriptstyle \Delta$};
 \node at (4,0.8) [right] {$\scriptstyle \mu$};
 \node at (4,0.2) [right] {$\scriptstyle \delta$};
 \node at (3.5,0.5) [above] {$\scriptstyle c$};
 \draw [dotted] (4.8,0) -- (4.8,1);
 \node at (5,0.5) {};  
\end{tikzpicture}
\begin{tikzpicture}[xscale=0.8, yscale = 0.8]
 \draw [colorM, ultra thick, rounded corners] (0.5,0) --  (0.5,0.4) --  (1,0.6) --  (1,1);
 \draw (1,0) -- (0.5,0.3);
 \draw (1,0.7) -- (0.5,1);
 \node at (0.5,0.3) [left] {$\scriptstyle \rho$};
 \node at (1,0.7) [right] {$\scriptstyle \gamma$};
 \fill [colorM] (0.5,0.3) circle (0.1);
 \fill [colorM] (1,0.7) circle (0.1);
 \node at (2,0.5) {$=$};
 \draw (4,1) -- (4,0);
 \draw (3,1) -- (3,0);
 \draw (3,0.8) -- (4,0.2);
 \draw[colorM, ultra thick, rounded corners] (3,0) -- (3,0.2) -- (3.3,0.4); 
 \draw[colorM, ultra thick, rounded corners] (3.7,0.6) -- (4,0.8) -- (4,1); 
 \fill [colorM] (3,0.2) circle (0.1);
 \fill [colorM] (4,0.8) circle (0.1);
 \fill (3,0.8) circle (0.08);
 \fill (4,0.2) circle (0.08);
 \node at (3,0.8) [left] {$\scriptstyle \mu$};
 \node at (3,0.2) [left] {$\scriptstyle \gamma$};
 \node at (4,0.8) [right] {$\scriptstyle \rho$};
 \node at (4,0.2) [right] {$\scriptstyle \Delta$};
 \node at (3.5,0.5) [above] {$\scriptstyle c$};
 \draw [dotted] (4.8,0) -- (4.8,1);
 \node at (5,0.5) {};  
\end{tikzpicture}
\begin{tikzpicture}[xscale=0.8, yscale = 0.8]
 \draw [colorM, ultra thick] (1,0) --  (1,1);
 \draw (0.5,0) -- (1,0.3);
 \draw (1,0.7) -- (0.5,1);
 \node at (1,0.3) [right] {$\scriptstyle \lambda$};
 \node at (1,0.7) [right] {$\scriptstyle \gamma$};
 \fill [colorM] (1,0.3) circle (0.1);
 \fill [colorM] (1,0.7) circle (0.1);
 \node at (2,0.5) {$=$};
 \draw[colorM, ultra thick] (4,0) -- (4,1);
 \draw (3,1) -- (3,0);
 \draw (3,0.2) -- (3.3,0.4); 
 \draw (3.7,0.6) -- (4,0.8);
 \draw (3,0.8) -- (4,0.2);
 \fill [colorM] (4,0.2) circle (0.1);
 \fill [colorM] (4,0.8) circle (0.1);
 \fill (3,0.8) circle (0.08);
 \fill (3,0.2) circle (0.08);
 \node at (3,0.8) [left] {$\scriptstyle \mu$};
 \node at (3,0.2) [left] {$\scriptstyle \Delta$};
 \node at (4,0.8) [right] {$\scriptstyle \lambda$};
 \node at (4,0.2) [right] {$\scriptstyle \gamma$};
 \node at (3.5,0.5) [above] {$\scriptstyle c$};
\end{tikzpicture}
\caption{Hopf compatibility conditions}\label{pic:HopfBiMod}
\end{figure}

We now return to our category $\C = \vect$, as usual omitted from notations.

\begin{theorem}\label{thm:HBiMod}
\begin{enumerate}
\item\label{item:CatInclBi4}
One has a fully faithful functor
\begin{align}
\FF :\; {^*}\!\Bialg & \longhookrightarrow {^*}\!\BrSystBP_4 \label{eqn:BialgAsBrSyst4}\\
(H,\mu,\nu,\Delta,\varepsilon) &\longmapsto  \BBialgg(H):=(V_1:=H, V_2:=H^{op}, V_3:=H^*, V_4:=(H^{cop})^*; \notag\\
&  \sigma_{i,i}:=\sigma_{Ass}(V_i),\sigma_{1,2}:=\tau_{H,H^{op}},\sigma_{3,4}:=\tau_{H^*, (H^{cop})^*}, \sigma_{1,3}:=\sigma_{bi}(H), \notag\\
&   \sigma_{2,3}:=\sigma_{bi}(H^{op}),\sigma_{1,4}:=\sigma_{bi}(H^{cop}),\sigma_{2,4}:=\sigma_{bi}(H^{op,cop});\notag\\
&   \nu,\nu,\varepsilon^*,\varepsilon^*; \varepsilon,\varepsilon,\nu^*,\nu^*),\notag\\
f &\longmapsto (f,f,(f^{-1})^*,(f^{-1})^*),\notag
\end{align}
where~$\tau$ is the transposition of the corresponding factors, and $\sigma_{bi}(A)$ denotes the map~$\sigma_{bi}$ from Theorem~\ref{thm:Bialg} for the bialgebra~$A$ (Fig.~\ref{pic:sigmaHBiMod}).

\item\label{item:ModEquivBi4} For a bialgebra~$H$, one has category equivalences 

{\renewcommand{\arraystretch}{1.2}\setlength{\tabcolsep}{2pt}
\begin{tabular}{ccccc} 
${_H^H}\!\ModCat_H^H$ & $\stackrel{\sim}{\rightarrow}$ & $\ModCatN_{\BBialgg(H)}$ & $\stackrel{\sim}{\rightarrow}$ & $\ModCat_{\W(H)}$\\ 
$(M,\rho, \lambda, \delta, \gamma)$ & $\mapsto$ & $(M;\rho, \rrho(\lambda), \delta^{co}, \rrho(\gamma^{co}))$ & $\mapsto$ & $(M, \rrho (\gamma^{co}) \otimes \delta^{co} \otimes \rrho (\lambda) \otimes \rho)$
\end{tabular}
\setlength{\tabcolsep}{6pt}}

where~$\rrho$ is the correspondence from Lemma~\ref{thm:lMod=rMod}, and~$\W(H)$ is the braided tensor product of UAAs 
$\W(H)=(H^{cop})^* \underset{\xi}{\otimes} H^*\underset{\xi}{\otimes} H^{op}  \underset{\xi}{\otimes} H$.

\item\label{item:ModEquivBi4'}  If~$H$ is a Hopf algebra, then, for any $\theta \in S_4$, one has category equivalences 
\[{_H^H}\!\ModCat_H^H \stackrel{\sim}{\longrightarrow} \ModCatN_{\theta \BBialgg(H)} \stackrel{\sim}{\longrightarrow} \ModCat_{\theta\cdot \W(H)},\]
where the bipointed braided system~$\theta \BBialgg(H)$ is obtained from~$\BBialgg(H)$ by a component permutation from Remark~\ref{thm:BrSystInvSn}, and the UAA~$\theta \cdot \W(H)$, isomorphic to~$\W(H)$, is obtained from~$\W(H)$ by a component permutation from Remark~\ref{thm:UAABrSystInvSn}. 
\end{enumerate}
\end{theorem}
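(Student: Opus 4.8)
The plan is to bootstrap the whole statement from the rank-$2$ analysis of Theorem~\ref{thm:Bialg}, the bimodule systems $\Bimod$, and the permutation machinery, treating $\BBialgg(H)$ as an assembly of two-object subsystems. First I would check that $\BBialgg(H)$ is a bipointed braided system of UAAs by verifying the colored YBE on every triple $V_i\otimes V_j\otimes V_k$ with $i\le j\le k$. The diagonal triples $V_i\otimes V_i\otimes V_i$ are automatic from Theorem~\ref{thm:UAA}. For the two-equal triples I would use Proposition~\ref{thm:ProdOfAlgebrasBrFamily}: the YBE on $V_i\otimes V_i\otimes V_j$ (resp. $V_i\otimes V_j\otimes V_j$) is equivalent to the naturality of $\sigma_{i,j}$ with respect to $\mu_i$ (resp. $\mu_j$), and Remark~\ref{rmk:pre_mirror} lets me use $\sigma_{Ass}$ in place of $\sigma_{Ass}^r$ throughout. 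Each such naturality is then either trivial (for the two transposition components $\sigma_{1,2},\sigma_{3,4}=\tau$) or a restatement of Theorem~\ref{thm:Bialg}(\ref{item:EqnEquivBi}) applied to the appropriate twist of $H$; here Observation~\ref{thm:TwistedBialg} is what identifies $V_3,V_4$ with the duals of $H,H^{op},H^{cop},H^{op,cop}$ so that the four $\sigma_{bi}$-components $\sigma_{1,3},\sigma_{2,3},\sigma_{1,4},\sigma_{2,4}$ become genuine instances of the rank-$2$ braiding.

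The genuinely new verifications are the four all-distinct triples $(1,2,3)$, $(1,2,4)$, $(1,3,4)$ and $(2,3,4)$, which are YBEs of the ``known'' type in the dichotomy of Proposition~\ref{thm:ProdOfAlgebrasBrFamily}(\ref{item:mixed}) (they involve only off-diagonal components, so the choice of diagonal braiding is irrelevant). I would treat these graphically, reading bottom-to-top as in Figure~\ref{pic:YBiij}: in $(1,2,3)$ and $(1,2,4)$ a single dual strand is pulled leftward past the transposition $\tau$ of the two algebra strands, and in $(1,3,4)$ and $(2,3,4)$ a single algebra strand is pulled rightward past the transposition $\tau$ of the two dual strands. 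I expect each to collapse to one Sweedler-notation identity that is a direct consequence of the coassociativity of $\Delta$ (equivalently the associativity of $\mu^*$) together with the counit axioms, with no hypothesis on $H$ beyond its being a bialgebra; alternatively, some of these rank-$3$ subsystems can be recognized as instances of the crossed-product system of Proposition~\ref{thm:smash}, exactly as $\BBialg(H)$ was. For the functoriality and full-faithfulness claims I would copy the argument of Theorem~\ref{thm:Bialg}(\ref{item:CatInclBi}) entry by entry: a bipointed isomorphism $\BBialgg(H)\to\BBialgg(K)$ must preserve units, counits and all braiding components, the $\tau$-components force the first two and the last two entries to have matching shape, and the $\sigma_{bi}$-components together with the counit compatibility force the dual entries to equal $(f^{-1})^*$ for a bialgebra isomorphism $f$ of the first entry. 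This shows $\FF$ is well defined, full and faithful.

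For Part~(\ref{item:ModEquivBi4}) I would apply Observation~\ref{rmk:MultiMod}: a braided $\BBialgg(H)$-module is a quadruple of module structures over the UAAs $H$, $H^{op}$, $H^*$, $(H^{cop})^*$, pairwise compatible in the sense of~\eqref{eqn:MultiBrMod}. Using Lemma~\ref{thm:lMod=rMod} (a right $H^{op}$-module is a left $H$-module) and Lemma~\ref{thm:Mod=Comod} (right $H^*$- and $(H^{cop})^*$-modules are right and left $H$-comodules), the four structures become exactly $\rho,\lambda,\delta,\gamma$. The six off-diagonal compatibility relations then read off as the four Hopf compatibility conditions of Definition~\ref{def:cat_Hopfbi} for the pairs $(1,3),(2,3),(1,4),(2,4)$ — the $(1,3)$-relation being the right-right condition~\eqref{eqn:rrHopfMod} already matched in Theorem~\ref{thm:Bialg}(\ref{item:ModEquivBi}), the others obtained after applying $\Id_M\otimes ev$ and the transition lemmas — together with the bimodule axiom for $(1,2)$ and the bicomodule axiom for $(3,4)$, both coming from the $\tau$-components. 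This yields ${_H^H}\!\ModCat_H^H\simeq\ModCatN_{\BBialgg(H)}$, and the second equivalence $\ModCatN_{\BBialgg(H)}\simeq\ModCat_{\W(H)}$ is immediate from Proposition~\ref{thm:ProdOfAlgebrasBrFamilyMod}.

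Part~(\ref{item:ModEquivBi4'}) is then a formal consequence of invertibility. When $H$ is a Hopf algebra, Theorem~\ref{thm:Bialg}(\ref{item:non_invBi}) makes all four $\sigma_{bi}$-components invertible, while the two $\tau$-components are invertible for any $H$; hence every off-diagonal $\sigma_{i,j}$ is invertible. The partial $S_4$-action of Remark~\ref{thm:BrSystInvSn} then produces, for each $\theta\in S_4$, a bipointed braided system $\theta\BBialgg(H)$ with $\ModCatN_{\BBialgg(H)}\simeq\ModCatN_{\theta\BBialgg(H)}$, and Remark~\ref{thm:UAABrSystInvSn} upgrades this to an algebra isomorphism $\W(H)\simeq\theta\cdot\W(H)$ with the matching module-category equivalence; composing with Part~(\ref{item:ModEquivBi4}) gives the stated chain. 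The main obstacle throughout is the bookkeeping in Part~(\ref{item:CatInclBi4}): one must keep the four twisted duals and their $\sigma_{bi}$-formulas consistently labelled so that the reduction to Theorem~\ref{thm:Bialg} is legitimate, and one must actually carry out the four mixed-triple computations, which is where the only genuinely new (if routine) calculation lives. The module-theoretic and permutation parts are essentially bookkeeping layered on top of the rank-$2$ results.
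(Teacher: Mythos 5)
Your strategy coincides with the paper's own proof: both reduce Part~\ref{item:CatInclBi4} to the rank-$2$ Theorem~\ref{thm:Bialg} (for the pairs $(i,j)$ with $i\le 2<j$, applied to the twisted bialgebras $H^{op}$, $H^{cop}$, $H^{op,cop}$) and to $\Bimod$-type transposition systems (for $(1,2)$ and $(3,4)$), leaving the four mixed triples as the only genuinely new YBE checks; Part~\ref{item:ModEquivBi4} goes through Observation~\ref{rmk:MultiMod}, Lemmas~\ref{thm:lMod=rMod} and~\ref{thm:Mod=Comod}, and Proposition~\ref{thm:ProdOfAlgebrasBrFamilyMod}; Part~\ref{item:ModEquivBi4'} through the permutation Remarks~\ref{thm:BrSystInvSn} and~\ref{thm:UAABrSystInvSn}. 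Two details, however, need repair.

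First, your prediction that all four mixed-triple YBEs collapse to consequences of the coassociativity of $\Delta$ (plus counit axioms) is only half right. On $V_1\otimes V_2\otimes V_k$, $k\in\{3,4\}$, a single dual strand crosses the two algebra strands, so the iterated $\sigma_{bi}$-crossings apply the comultiplication of $H^*$ (the dual of $\mu$) twice to the dual element; the identity needed there is the \emph{associativity of $\mu$}, which is exactly what the paper invokes for these two triples. Coassociativity of $\Delta$ handles only the triples $V_k\otimes V_3\otimes V_4$, $k\in\{1,2\}$. Since $H$ is a bialgebra both axioms are available, so your computation would still close, but the attribution as written is wrong. Second, and more substantively, in Part~\ref{item:ModEquivBi4'} you assert that Theorem~\ref{thm:Bialg}.\ref{item:non_invBi} ``makes all four $\sigma_{bi}$-components invertible'' once $H$ is a Hopf algebra. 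That statement gives invertibility of $\sigma_{bi}(A)$ only when $A$ \emph{itself} has an antipode, and for $A=H^{op}$ and $A=H^{cop}$ an antipode exists only if the antipode $s$ of $H$ is invertible (Observation~\ref{thm:TwistedBialg}). The paper closes this gap by remarking that in the finite-dimensional setting $\C=\vect$ an antipode is automatically bijective; without that remark (or that hypothesis) your step would fail, e.g.\ for a Hopf algebra with non-bijective antipode in a more general category.
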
 

\begin{figure}\centering
\begin{tikzpicture}[scale=0.8]
 \node at (-1,0.5) {$\sigma_{1,3} \, \longleftrightarrow $};
 \draw [dashed,rounded corners] (1,-0.25)  -- (1,0.5) -- (0,1);
 \draw [rounded corners] (0,-0.25) -- (0,0.5) -- (1,1);
 \draw [dashed,rounded corners] (0.5,0.55)-- (1,0);
 \draw [rounded corners](0,0) -- (0.5,0.55);
 \fill (0.5,0.55) circle (0.075);
 \fill (0,0) circle (0.075);
 \fill (1,0) circle (0.075);
\end{tikzpicture}
\begin{tikzpicture}[scale=0.8]
 \node at (-2.5,0) {};
 \node at (-1,0.5) {$\sigma_{2,3} \, \longleftrightarrow $};
 \draw [dashed,rounded corners] (1,-0.25)  -- (1,0.5) -- (0,1);
 \draw [rounded corners] (0,-0.25) -- (0,0.5) -- (1,1);
 \draw [dashed,rounded corners] (0.5,0.55) -- (1.3,0.3) -- (1,0);
 \draw [rounded corners](0,0) -- (0.5,0.55);
 \fill (0.5,0.55) circle (0.075);
 \fill (0,0) circle (0.075);
 \fill (1,0) circle (0.075);
\end{tikzpicture}
\begin{tikzpicture}[scale=0.8]
 \node at (-2.5,0) {};
 \node at (-1,0.5) {$\sigma_{1,4} \, \longleftrightarrow $};
 \draw [dashed,rounded corners] (1,-0.25)  -- (1,0.5) -- (0,1);
 \draw [rounded corners] (0,-0.25) -- (0,0.5) -- (1,1);
 \draw [dashed,rounded corners] (0.5,0.55) -- (1,0);
 \draw [rounded corners](0,0) -- (-0.3,0.3) -- (0.5,0.55);
 \fill (0.5,0.55) circle (0.075);
 \fill (0,0) circle (0.075);
 \fill (1,0) circle (0.075);
\end{tikzpicture}
\begin{tikzpicture}[scale=0.8]
 \node at (-2.5,0) {};
 \node at (-1,0.5) {$\sigma_{2,4} \, \longleftrightarrow $};
 \draw [dashed,rounded corners] (1,-0.25)  -- (1,0.5) -- (0,1);
 \draw [rounded corners] (0,-0.25) -- (0,0.5) -- (1,1);
 \draw [dashed,rounded corners] (0.5,0.55) -- (1.3,0.3) -- (1,0);
 \draw [rounded corners](0,0) -- (-0.3,0.3) -- (0.5,0.55);
 \fill (0.5,0.55) circle (0.075);
 \fill (0,0) circle (0.075);
 \fill (1,0) circle (0.075);
\end{tikzpicture}
\caption{Some braiding components for~$\BBialgg(H)$}\label{pic:sigmaHBiMod}
\end{figure}

\begin{proof}
Let~$\FF_{i,j}$ be the composition of~$\FF$ with the forgetful functor $\For_{i,j}\colon {^*}\!\BrSystBP_4 \to {^*}\!\BrSystBP_2$ which picks the $i$th and $j$th components, $i < j$. For $i \le 2 < j$ one recognizes in~$\FF_{i,j}$ the functor~\eqref{eqn:BialgAsBrSyst} from Theorem~\ref{thm:Bialg} and its slight modifications which send a bialgebra~$H$ to $\BBialg(H^{op})$, $\BBialg(H^{cop})$, or $\BBialg(H^{op,cop})$ (with some $\sigma_{Ass}^r$-type braiding components replaced with their $\sigma_{Ass}$ versions). Further, $\FF_{1,2}(H)$ and $\FF_{3,4}(H)$ coincide with the braided systems of UAAs~$\Bimod(H,H)$ and $\Bimod(H^*,H^*)$ respectively. Hence all the~$\xi_{i,j}$ for $i < j$ are natural w.r.t. the units and the multiplications. They also satisfy the YBEs required by Theorem~\ref{thm:ProdOfAlgebrasBrFamily}~\ref{item:mixed}. Indeed, on $V_1 \otimes V_2 \otimes V_k$, $k \in \{3,4\}$, the YBE follows from the associativity of~$\mu$, and on $V_k \otimes V_3 \otimes V_4$, $k \in \{1,2\}$ from the coassociativity of~$\Delta$. Theorem~\ref{thm:ProdOfAlgebrasBrFamily} then asserts that $\BBialgg(H)$ is a braided system of UAAs. It is clearly bipointed. 

To show that~$\FF$ is well defined on morphisms, it suffices to check this for all the $\FF_{i,j}$, $i < j$. For $i \le 2 < j$ it follows from Theorem~\ref{thm:Bialg}. For~$\FF_{1,2}$ and~$\FF_{3,4}$, observe that the~$\xi_{1,2}$ and~$\xi_{3,4}$ components of our braidings are simply transpositions, ensuring the defining property~\eqref{eqn:BrMor} of braided morphisms. Further, take a braided isomorphism $(f,g,h,k) \colon \BBialgg(H) \to \BBialgg(K)$ for bialgebras~$H$ and~$K$. Applying forgetful functors $\For_{i,j}$, $i \le 2 < j$, and using Theorem~\ref{thm:Bialg} again, one sees that~$f$ is a bialgebra isomorphism, and that $f=g=(h^*)^{-1}=(k^*)^{-1}$. Hence $\FF$ is full and faithful.

Let us turn to modules. Take $(M,\rho, \lambda, \delta, \gamma) \in {_H^H}\!\ModCat_H^H$. Transform left structures~$\lambda$ and~$\gamma$ into right structures $\rrho(\lambda)$ and~$\rrho(\gamma)$, and then comodule structures $\delta$ and~$\rrho(\gamma)$ into module structures $\delta^{co}$ and $\rrho(\gamma)^{co} =\rrho(\gamma^{co})$. Thus the Hopf bimodule~$M$ over~$H$ becomes a module over UAAs $H=V_1$, $H^{op}=V_2$, $H^*=V_3$, and $(H^{cop})^*=V_4$. Further, the $4$ Hopf compatibility conditions coincide with the braided module compatibility conditions on $V_i \otimes V_j$, $i \le 2 < j$, and left-right action (or coaction) compatibility conditions cover the case $i=1$, $j=2$ (respectively, $i=3$, $j=4$). Observation~\ref{rmk:MultiMod} then yields the desired category equivalence ${_H^H}\!\ModCat_H^H \simeq \ModCatN_{\BBialgg(H)}$.

The remaining assertions follow from the correspondence between braided modules and modules over braided tensor products (Proposition~\ref{thm:ProdOfAlgebrasBrFamilyMod}), the invertibility of~$\sigma_{bi}$ in the Hopf algebra case, the properties of twisted Hopf algebras (Observation~\ref{thm:TwistedBialg}; recall that in the finite-dimensional case, an antipode is always invertible), and the component permuting Propositions~\ref{thm:BrSystInv} and~\ref{thm:ProdOfAlgebrasBrFamilyInv}.
\end{proof}

The category ${_H^H}\!\ModCat_H^H$ for a Hopf algebra~$H$ is known to be equivalent to the categories of right modules over $3$ UAAs: the twisted product of Cibils--Rosso \cite{CibilsRosso}:
\begin{align*}
\X(H) &= (H \otimes H^{op})\underline{\otimes}(H^* \otimes (H^*)^{op}),
\end{align*}
and the two-sided and diagonal crossed products of Panaite~\cite{Panaite2}:
\begin{align*}
\Y(H) &= H^* \# (H^{op} \otimes H)\# (H^*)^{op},&
\Z(H) &= (H^* \otimes (H^*)^{op}) \bowtie (H^{op} \otimes H).
\end{align*}
Here we adapt Panaite's notations to our conventions. For instance, he uses the arched duality, so his dual bialgebra~$H^*$ corresponds to our~$(H^*)^{op,cop}$. Also, he sees Hopf bimodules {over $H^*$} as {left} modules over~$\X(H)$, while we interpret Hopf bimodules {over $H$} as {right} modules. The algebras $\X,\Y,\Z$ are of the form $\theta\cdot \W(H)$, with as~$\theta$ the permutations $(14)(23)$, $(1234)$, and $(34)$. Point~\ref{item:ModEquivBi4'} of our theorem includes them into a family of $\# S_4 = 24$ UAAs and gives explicit isomorphisms between them, inducing equivalences for their module categories (Remark~\ref{thm:UAABrSystInvSn}). We thus generalize and conceptually explain the central results of \cite{CibilsRosso,Panaite2}, minimizing technical computations.

Braided adjoint actions allow to regard the bar complex with bimodule coefficients as a complex of bimodules (Proposition~\ref{thm:BarCxIsBimod}). The same is true for Hopf bimodules:

\begin{proposition}\label{thm:BarCxIsHopfBimod}
Take a Hopf bimodule $(M, \, M \otimes H \overset{\rho}{\to} M,\,  H \otimes M \overset{\lambda}{\to} M, \, M \overset{\delta}{\to} M \otimes H, \, M \overset{\gamma}{\to} H \otimes M )$ over a bialgebra $(H,\mu,\nu, \Delta,\varepsilon)$ in~$\vect$. The bar complex $(M\otimes T(H), d_{bar})$ for~$H$ with coefficients in~$M$ is a complex in ${_H^H}\!\ModCat_H^H$. In other words, the differentials $(d_{bar})_n$ are Hopf bimodule morphisms, with the following Hopf bimodule structure on $M \otimes H^{n}$ (Fig.~\ref{pic:BarCxIsBicomod}):
\begin{align*} & \left. 
   \begin{array}{r c l r c l}
\rho_{bar} & = &  \mu^{n+1}, & \qquad\quad
\lambda_{bar} & = &  \lambda^1,
   \end{array}
  \right\} \quad \text{\parbox{0.4\textwidth}{peripheral  actions}} \\
& \left. 
   \begin{array}{r c l}
\delta_{bar} & = & (\mu^{n+2})^{(n)}  \omega_{2(n+1)}^{-1}  (\delta \otimes \Delta^{\otimes n}),\\
\gamma_{bar} & = & (\mu^{1})^{( n)}  \omega_{2(n+1)}^{-1}  (\gamma \otimes \Delta^{\otimes n}).   \end{array}
  \right\} \quad \text{\parbox{0.2\textwidth}{diagonal \\ coactions}}  
\end{align*}  
Here $\omega_{2(n+1)} \in S_{2(n+1)}$ from~\eqref{eqn:omega} acts on $M \otimes H^{2n+1}$ by factor permutation. 
\end{proposition}

\begin{figure}\centering
\begin{tikzpicture}[xscale=0.5, yscale = 0.4]
 \node at (-4,0) {};
 \node at (-2,2) {$\delta_{bar}  \; \longleftrightarrow$}; 
 \draw [colorM,ultra thick] (0,0) --  (0,4);
 \draw (1,0) -- (1,4);
 \draw (2,0) -- (2,4);
 \draw (3,0) -- (3,4);
 \draw (0,1) -- (4,3) -- (4,4);
 \draw (1,1) -- (4,3);
 \draw (2,1) -- (4,3);
 \draw (3,1) -- (4,3);
 \node at (0,1) [left] {$\scriptstyle \delta$};
 \node at (1,1) [left] {$\scriptstyle \Delta$};
 \node at (2,1) [left] {$\scriptstyle \Delta$};
 \node at (3,1) [left] {$\scriptstyle \Delta$};
 \node at (4,3) [right] {$\scriptstyle \mu$};
 \fill [colorM] (0,1) circle (0.12);
 \fill (1,1) circle (0.1);
 \fill (2,1) circle (0.1);
 \fill (3,1) circle (0.1);
 \fill (4,3) circle (0.1);
\end{tikzpicture}
\begin{tikzpicture}[xscale=0.5, yscale = 0.4]
 \node at (-6,0) {};
 \node at (-3.2,2) {$\gamma_{bar}  \; \longleftrightarrow$}; 
 \draw [colorM,ultra thick] (0,0) --  (0,4);
 \draw (1,0) -- (1,4);
 \draw (2,0) -- (2,4);
 \draw (3,0) -- (3,4);
 \draw (0,1) -- (-1,3) -- (-1,4);
 \draw (1,1) -- (-1,3);
 \draw (2,1) -- (-1,3);
 \draw (3,1) -- (-1,3);
 \node at (0,1) [right] {$\scriptstyle \gamma$};
 \node at (1,1) [right] {$\scriptstyle \Delta$};
 \node at (2,1) [right] {$\scriptstyle \Delta$};
 \node at (3,1) [right] {$\scriptstyle \Delta$};
 \node at (-1,3) [left] {$\scriptstyle \mu$};
 \fill [colorM] (0,1) circle (0.12);
 \fill (1,1) circle (0.1);
 \fill (2,1) circle (0.1);
 \fill (3,1) circle (0.1);
 \fill (-1,3) circle (0.1);
\end{tikzpicture}
\caption{Diagonal bicomodule structure on the bar complex}\label{pic:BarCxIsBicomod}
\end{figure}
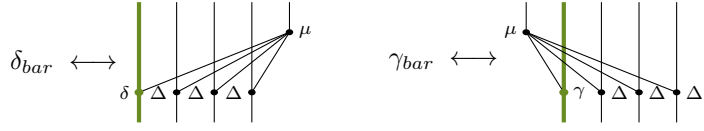

\begin{proof}
Theorem~\ref{thm:HBiMod} asserts that~$M$ is a $\BBialgg(H)$-module. Proposition~\ref{thm:adjoint_multi_coeffs} for $t=1$ then yields a $\BBialgg(H)$-module structure on $M \otimes T(H)$, compatible with the braided differential ${^{\rho}}\!d$. By Theorem~\ref{thm:UAA}, the latter is the bar differential. Using Theorem~\ref{thm:HBiMod} again, one transforms the $\BBialgg(H)$-module structure on $M \otimes T(H)$ into a Hopf bimodule structure over~$H$, which coincides with the desired one.
\end{proof}

This Hopf bimodule structure on the bar complex, and its dual structure on the cobar complex, are essential for defining the \emph{Hopf bimodule (co)homology}, introduced by Ospel in the one-module case~\cite{OspelThese} and by Taillefer \cite{TailleferThese,Taillefer} for two modules.

Now, take two Hopf bimodules $M \in {_H^H}\!\ModCat_H^H \simeq \ModCatN_{\BBialgg(H)}$ and $N \in {_{H^*}^{H^*}}\!\ModCat_{H^*}^{H^*} \simeq {_{\BBialgg(H)}}\!\ModCatN$. Mimicking the constructions for Hopf modules from the previous section, one gets a tetra-complex structure on the tetra-graded vector space $M \otimes T(H) \otimes T(H^{op})\otimes T(H^*)\otimes T((H^{cop})^*) \otimes N$. If~$N$ is finite dimensional, then this space can be regarded as $\Hom(T(H) \otimes N^* \otimes T(H), T(H) \otimes M \otimes T(H))$, with $ N^* \in {_H^H}\!\ModCat_H^H $ (here in order to get rid of twisted (co)multiplications, we moved $T(H^{op})$ to the left of~$M$, reversing the order of its factors, and similarly for $T((H^{cop})^*)$). This generalizes an alternative (co)homological approach to Hopf bimodules from~\cite{Taillefer}.

\def\cprime{$'$} \def\cprime{$'$} \def\cprime{$'$} \def\cprime{$'$}
  \def\cprime{$'$} \def\cprime{$'$}


\begin{thebibliography}{10}

\bibitem{Beck}
J.~Beck.
\newblock Distributive laws.
\newblock In {\em Sem. on {T}riples and {C}ategorical {H}omology {T}heory
  ({ETH}, {Z}\"urich, 1966/67)}, pages 119--140. Springer, Berlin, 1969.

\bibitem{YBSyst_Entwining}
T.~Brzezi{\'n}ski and F.~F. Nichita.
\newblock Yang--{B}axter systems and entwining structures.
\newblock {\em Comm. Algebra}, 33(4):1083--1093, 2005.

\bibitem{PanaiteGen}
D.~Bulacu, F.~Panaite, and F.~Van~Oystaeyen.
\newblock Generalized diagonal crossed products and smash products for
  quasi-{H}opf algebras. {A}pplications.
\newblock {\em Comm. Math. Phys.}, 266(2):355--399, 2006.

\bibitem{Cheng}
E.~Cheng.
\newblock Iterated distributive laws.
\newblock {\em Math. Proc. Cambridge Philos. Soc.}, 150(3):459--487, 2011.

\bibitem{CibilsRosso}
C.~Cibils and M.~Rosso.
\newblock Hopf bimodules are modules.
\newblock {\em J. Pure Appl. Algebra}, 128(3):225--231, 1998.

\bibitem{Fresse}
B.~Fresse.
\newblock Th\'eorie des op\'erades de {K}oszul et homologie des alg\`ebres de
  {P}oisson.
\newblock {\em Ann. Math. Blaise Pascal}, 13(2):237--312, 2006.

\bibitem{GS90}
M.~Gerstenhaber and S.~D. Schack.
\newblock Bialgebra cohomology, deformations, and quantum groups.
\newblock {\em Proc. Nat. Acad. Sci. U.S.A.}, 87(1):478--481, 1990.

\bibitem{Gurevich}
D.~I. Gurevich.
\newblock Algebraic aspects of the quantum {Y}ang--{B}axter equation.
\newblock {\em Algebra i Analiz}, 2(4):119--148, 1990.

\bibitem{HN}
F.~Hausser and F.~Nill.
\newblock Diagonal crossed products by duals of quasi-quantum groups.
\newblock {\em Rev. Math. Phys.}, 11(5):553--629, 1999.

\bibitem{HlSn}
L.~Hlavat{\'y} and L.~{\v{S}}nobl.
\newblock Solution of the {Y}ang--{B}axter system for quantum doubles.
\newblock {\em Internat. J. Modern Phys. A}, 14(19):3029--3058, 1999.

\bibitem{IteratedTwisted}
P.~Jara~Mart{\'{\i}}nez, J.~L{\'o}pez~Pe{\~n}a, F.~Panaite, and F.~van
  Oystaeyen.
\newblock On iterated twisted tensor products of algebras.
\newblock {\em Internat. J. Math.}, 19(9):1053--1101, 2008.

\bibitem{KauffmanGraphs}
L.~H. Kauffman.
\newblock Invariants of graphs in three-space.
\newblock {\em Trans. Amer. Math. Soc.}, 311(2):697--710, 1989.

\bibitem{Kaygun}
A.~Kaygun.
\newblock Hopf--{H}ochschild (co)homology of module algebras.
\newblock {\em Homology, Homotopy Appl.}, 9(2):451--472, 2007.

\bibitem{Lebed1}
V.~Lebed.
\newblock Homologies of algebraic structures via braidings and quantum
  shuffles.
\newblock {\em J. Algebra}, 391:152--192, 2013.

\bibitem{Lebed2ter}
V.~Lebed.
\newblock R-matrices, {Y}etter--{D}rinfel'd modules and {Y}ang--{B}axter
  equation.
\newblock {\em Axioms}, 2(3):443--476, 2013.

\bibitem{Majid3}
S.~Majid.
\newblock Quasitriangular {H}opf algebras and {Y}ang--{B}axter equations.
\newblock {\em Internat. J. Modern Phys. A}, 5(1):1--91, 1990.

\bibitem{BraidedDiff}
S.~Majid.
\newblock Free braided differential calculus, braided binomial theorem, and the
  braided exponential map.
\newblock {\em J. Math. Phys.}, 34(10):4843--4856, 1993.

\bibitem{MajidBraided}
S.~Majid.
\newblock Algebras and {H}opf algebras in braided categories.
\newblock In {\em Advances in {H}opf algebras ({C}hicago, {IL}, 1992)}, volume
  158 of {\em Lecture Notes in Pure and Appl. Math.}, pages 55--105. Dekker,
  New York, 1994.

\bibitem{MajidMarkl}
S.~Majid and M.~Markl.
\newblock Glueing operation for {$R$}-matrices, quantum groups and
  link-invariants of {H}ecke type.
\newblock {\em Math. Proc. Cambridge Philos. Soc.}, 119(1):139--166, 1996.

\bibitem{MW}
M.~Mastnak and S.~Witherspoon.
\newblock Bialgebra cohomology, pointed {H}opf algebras, and deformations.
\newblock {\em J. Pure Appl. Algebra}, 213(7):1399--1417, 2009.

\bibitem{Montgomery}
S.~Montgomery.
\newblock {\em Hopf algebras and their actions on rings}, volume~82 of {\em
  CBMS Regional Conference Series in Mathematics}.
\newblock Published for the Conference Board of the Mathematical Sciences,
  Washington, DC, 1993.

\bibitem{NiUAA}
F.~Nichita.
\newblock Self-inverse {Y}ang--{B}axter operators from (co)algebra structures.
\newblock {\em J. Algebra}, 218(2):738--759, 1999.

\bibitem{Nichita_Bialg}
F.~F. Nichita.
\newblock New solutions for {Y}ang--{B}axter systems.
\newblock {\em Acta Univ. Apulensis Math. Inform.}, (11):189--195, 2006.

\bibitem{Nuss}
P.~Nuss.
\newblock Noncommutative descent and non-abelian cohomology.
\newblock {\em $K$-Theory}, 12(1):23--74, 1997.

\bibitem{OspelThese}
C.~Ospel.
\newblock {\em Tressages et th\'eories cohomologiques pour les alg\`ebres de
  {H}opf. {A}pplication aux invariants des 3-vari\'et\'es}.
\newblock Th{\`e}se, Universit{\'e} Louis Pasteur (Strasbourg I), Strasbourg,
  1999.

\bibitem{Panaite2}
F.~Panaite.
\newblock Hopf bimodules are modules over a diagonal crossed product algebra.
\newblock {\em Comm. Algebra}, 30(8):4049--4058, 2002.

\bibitem{Panaite}
F.~Panaite and D.~{\c{S}}tefan.
\newblock Deformation cohomology for {Y}etter--{D}rinfel\cprime d modules and
  {H}opf (bi)modules.
\newblock {\em Comm. Algebra}, 30(1):331--345, 2002.

\bibitem{Rosso1Short}
M.~Rosso.
\newblock Groupes quantiques et alg\`ebres de battage quantiques.
\newblock {\em C. R. Acad. Sci. Paris S\'er. I Math.}, 320(2):145--148, 1995.

\bibitem{Rosso2}
M.~Rosso.
\newblock Integrals of vertex operators and quantum shuffles.
\newblock {\em Lett. Math. Phys.}, 41(2):161--168, 1997.

\bibitem{TailleferThese}
R.~Taillefer.
\newblock Th\'eories homologiques des alg\`ebres de {H}opf, 2001.
\newblock Thesis (Ph.D.)-- Univ. Montpellier II, Montpellier.

\bibitem{Taillefer}
R.~Taillefer.
\newblock Cohomology theories of {H}opf bimodules and cup-product.
\newblock {\em Algebr. Represent. Theory}, 7(5):471--490, 2004.

\bibitem{Wambst}
M.~Wambst.
\newblock Complexes de {K}oszul quantiques.
\newblock {\em Ann. Inst. Fourier (Grenoble)}, 43(4):1089--1156, 1993.

\bibitem{YamadaGraphs}
S.~Yamada.
\newblock An invariant of spatial graphs.
\newblock {\em J. Graph Theory}, 13(5):537--551, 1989.

\bibitem{Yau}
D.~Yau.
\newblock Deformation bicomplex of module algebras.
\newblock {\em Homology, Homotopy Appl.}, 10(1):97--128, 2008.

\bibitem{YetterGraphs}
D.~N. Yetter.
\newblock Category theoretic representations of knotted graphs in {${\bf
  S}^3$}.
\newblock {\em Adv. Math.}, 77(2):137--155, 1989.

\end{thebibliography}
\end{document}